\newcolumntype{C}[1]{>{\centering}m{#1}}
\author{Tobias Lejczyk and Catharina Stroppel}
\newcommand{\R}{\mathbb{R}}
\newcommand{\Z}{\mathbb{Z}}
\newcommand{\N}{\mathbb{N}}
\newcommand{\W}{\mathcal{W}}
\newcommand{\Nmod}{\mathcal{N}}
\newcommand{\la}{\lambda}
\newcommand{\La}{\Lambda}
\newcommand{\op}{\operatorname}
\newcommand{\down}{\vee}
\newcommand{\up}{\wedge}
\renewcommand{\S}{\mathcal{S}}
\renewcommand{\L}{\mathcal{L}}
\renewcommand{\H}{\mathcal{H}}
\newcommand{\p}{\mathfrak{p}}
\renewcommand{\O}{\mathcal{O}}
\newtheorem{lem}{Lemma}[subsection]
\newtheorem{cor}[lem]{Corollary}
\newtheorem{thm}[lem]{Theorem}
\newtheorem*{thmnonr}{Theorem}
\newtheorem{prop}[lem]{Proposition}
\newtheorem{example}[lem]{Example}
\theoremstyle{definition}
\newtheorem{defi}[lem]{Definition}
\theoremstyle{remark}
\newtheorem{rem}[lem]{Remark}
\newenvironment{enum}{
\begin{enumerate}
  \setlength{\itemsep}{1pt}
  \setlength{\parskip}{0pt}
  \setlength{\parsep}{0pt}}{\end{enumerate}
}
\newenvironment{ite}{
\begin{itemize}
  \setlength{\itemsep}{1pt}
  \setlength{\parskip}{0pt}
  \setlength{\parsep}{0pt}}{\end{itemize}
}
\title{A graphical description of $(D_n,A_{n-1})$ Kazhdan-Lusztig polynomials}
\begin{document}
\psset{unit=0.5cm}		
\psset{dimen=middle}	
\newrgbcolor{lightred}{1 0.7 0.7}
\newrgbcolor{darkgreen}{0.2 0.7 0.2}
\selectlanguage{english}

\date{}
\maketitle
\setcounter{tocdepth}{1}
\begin{abstract}
We give an easy diagrammatical description of the parabolic Kazhdan-Lusztig polynomials for the Weyl group $W_n$ of type $D_n$ with parabolic subgroup of type $A_n$ and consequently an explicit counting formula for the dimension of the morphism spaces between indecomposable projective objects in the corresponding category $\O_0^\p$. As a byproduct we categorify irreducible $W_n$-modules corresponding to pairs of one-line partitions. Finally we indicate the motivation for introducing the combinatorics by connections to Springer theory, the category of perverse sheaves on isotropic Grassmannians and to Brauer algebras which will be treated in \cite{ES}.
\end{abstract}
\tableofcontents
\rohead{\thepage}
\rehead{A graphical description of $(D_n,A_{n-1})$ Kazhdan-Lusztig polynomials}
\lohead{\leftmark}
\lehead{\thepage}
\pagenumbering{arabic}

\section*{Introduction}
Finite-dimensional representations for a complex semisimple Lie algebra are well understood. Thanks to Weyl's Theorem they decompose into highest weight modules with characters given by Weyl's character formula. For infinite-dimensional highest weight modules the character theory is much harder, but Kazhdan-Lusztig polynomials, introduced by Kazhdan and Lusztig in \cite{KL79}, can be used to deduce such character formulas. Given an irreducible, possibly infinite-dimensional, highest weight module $L(\lambda)$ of highest weight $\lambda$, the Kazhdan-Lusztig polynomials, depending on $\lambda$ and $\mu$, determine the multiplicities $[M(\mu):L(\lambda)]$ of how often it occurs in a Jordan-H\"older series of a Verma module $M(\mu)$ of highest weight $\mu$. The character $\operatorname{ch}M(\mu)$ of $M(\mu)$ is known by Kostant's explicit combinatorial partition function. Using the Kazhdan-Lusztig polynomials and the formula $\operatorname{ch}M(\mu)=\sum[M(\mu):L(\lambda)]\operatorname{ch}L(\lambda)$, it is possible to express $\operatorname{ch}L(\lambda)$ as an alternating sum of characters $\operatorname{ch}M(\mu)$ and to compute the characters of the irreducible highest weight modules. To determine the inductively defined polynomials is however in general rather cumbersome.\\
In this article we will study the Lie algebra $\mathfrak{g}=\mathfrak{so}_{2n}$ of type $D_n$ and irreducible highest weight modules which are `almost' finite-dimensional; precisely, which are locally finite-dimensional for the standard parabolic subalgebra of type $A_{n}$ and have regular highest weights. These modules are objects in $\O^\p(\mathfrak{so_{2n}})_0$, the parabolic Bernstein-Gelfand-Gelfand category $\mathcal{O}$. Their characters can then be expressed as an alternating sum containing the characters of parabolic Verma modules using parabolic Kazhdan-Lusztig polynomials. Characters of parabolic Verma modules can be written in terms of characters of ordinary Verma modules using an explicit formula \cite[Prop. 3.4]{Soe97}.
In this article we will give:
\begin{ite}
 \item Explicit formulas for parabolic KL-polynomials in the case $(D_n,A_{n-1})$,
 \item a diagram calculus,
 \item explicit dimension formulas for the hom-spaces of indecomposable projective objects in $\O^\p(\mathfrak{so_{2n}})_0$,
 \item a framework for an explicit description of the endomorphism algebra of a minimal projective generator in $\O^\p(\mathfrak{so_{2n}})_0$.
\end{ite}

We start by recalling the properties of the Weyl group $\W$ of type $D_n$ with its set of generators $\S=\{s_0,\hdots,s_{n-1}\}$ and the maximal parabolic subgroup $\W_\p$ of type $A_{n-1}$ and consider the generic Hecke algebra $\H$ attached to $\W$ with its standard basis $H_w$, $w\in \W$ as $\Z[q,q^{-1}]$-module. Inside $\H$ we have the subalgebra $\H_\p$ attached to $\W_\p$. Letting $H_s$ for all $s\in\S_\p$ acting as $-q$ turns $\L$ into an $\H_\p$-module. The parabolic $\H$-Hecke module $\Nmod$, obtained by tensoring $\L$ over $\H_\p$ with $\H$,  has a standard basis $N_w=1\otimes H_w$,  $w\in\W^\p$ and also a Kazhdan-Lusztig basis $\underline{N}_w$, $w\in\W^\p$ consisting of elements that are invariant under a certain involution and can be written as linear combinations of the standard basis with leading term $1$ and otherwise coefficients in $q\Z[q]$. We are interested in the {\it Kazhdan-Lusztig polynomials} $n_{w',w}\Z[q]$ which describe the transformation between the bases, i.e. $\underline{N}_w=\sum\limits_{w'\in\W^\p}n_{w',w} N_{w'}$, see e.g. \cite{Soe97}.

  Using an embedding of $\W$ into the symmetric group $S_{2n}$ we attach to each $w\in\W^\p$ two types of objects, a {\bf weight} $w$ and a {\bf cup diagram} $C(w)$. A weight is just an $\{\up,\down\}$-sequence obtained by an easy combinatorial rule.  It can be glued on top of a cup diagram. The result is {\it oriented} if every cup is labeled by exactly one {$\down$} and one {$\up$}. The number of clockwise oriented cups is denoted $\operatorname{cl}(w'C(w))$.
  We then consider the free $\L$-module $M_D$ with weights as basis equipped with an $\H$-module structure pulled over from $\Nmod$ to $M_D$ via the obvious isomorphism $\Phi$. To each cup diagram we associate an element in $M_D$,
$$C(w)_{M_D}=\sum q^{\frac{\operatorname{cl}(w'C(w)}{2}}w',$$
where the sum runs over all weights $w'$ such that $w'C(w)$ is oriented.
\begin{thmnonr}
$\Phi(C(w)_{M_D})=\underline{N}_w,$
i.e. the image of a cup diagram $C(W)_{M_D}$ in $\Nmod$ is the Kazhdan-Lusztig basis element $\underline{N}_w$.
\end{thmnonr}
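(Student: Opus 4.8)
The plan is to exploit the uniqueness of the Kazhdan--Lusztig basis: $\underline{N}_w$ is the only element of $\Nmod$ that is invariant under the bar involution and whose expansion in the standard basis has the form $N_w+\sum_{w'\neq w}n_{w',w}N_{w'}$ with every $n_{w',w}\in q\Z[q]$. Hence it suffices to prove that $x_w:=\Phi(C(w)_{M_D})$ enjoys these two properties. Since $\Phi$ is an $\H$-module isomorphism carrying each weight to the corresponding standard basis element $N_{w'}$, I would transport the bar involution of $\Nmod$ to $M_D$ and verify both properties directly on the diagrammatic side, working with cup diagrams and their orientations throughout.

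The triangularity property is the combinatorial, and easier, half. First I would verify that the all-anticlockwise orientation of $C(w)$ is unique and is realised precisely by gluing the weight $w$ on top of $C(w)$, so that $\op{cl}(wC(w))=0$ and $N_w$ occurs with coefficient $q^0=1$ while no other weight contributes a constant term; this should follow directly from the rule attaching $C(w)$ to $w$. Second, for every weight $w'\neq w$ with $w'C(w)$ oriented I would check that $\op{cl}(w'C(w))$ is a positive even integer --- even because of the type-$D$ doubling built into the embedding $\W\hookrightarrow S_{2n}$ --- so that the coefficient $q^{\op{cl}(w'C(w))/2}$ lies in $q\Z[q]$; the fact that such $w'$ satisfy $w'<w$ in Bruhat order, although not strictly needed for uniqueness, confirms that one really recovers the polynomials $n_{w',w}$. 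Both points are statements about reorienting individual cups and should follow from the explicit weight/cup dictionary established earlier.

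For self-duality I would argue by induction on the length $\ell(w)$. The base case is the minimal element, where $C(w)_{M_D}$ reduces to a single weight and $x_w=N_w=\underline{N}_w$ is manifestly bar-invariant. For the inductive step, choose $s\in\S$ with $ws<w$ and $ws\in\W^\p$, and consider the product $x_{ws}\cdot\underline{H}_s$ by the self-dual Kazhdan--Lusztig generator $\underline{H}_s$. Since $x_{ws}$ is bar-invariant by induction and $\underline{H}_s$ is bar-invariant, so is $x_{ws}\cdot\underline{H}_s$. The heart of the argument is a \emph{surgery rule}: computing the right action of $\underline{H}_s$ on the diagrammatic element via its local effect on weights (the "$q+q^{-1}$" doubling move and the swap move near the positions governed by $s$) and showing that
\[
x_{ws}\cdot\underline{H}_s = x_w + \sum_{v} c_v\, x_v ,
\]
where the sum runs over strictly shorter $v\in\W^\p$ and the $c_v\in\Z[q,q^{-1}]$ are bar-invariant. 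As each $x_v$ is self-dual by the inductive hypothesis and the left-hand side is self-dual, it follows that $x_w$ is self-dual, completing the induction.

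The main obstacle is establishing this surgery rule, and in particular handling the generator $s_0$ at the branching node of the Dynkin diagram of type $D_n$, where the local combinatorics of cups differs from the type $A$ situation and genuine type-$D$ configurations (paired or nested cups meeting the fork) appear. Here I expect to need a careful case analysis of how $\underline{H}_{s_0}$ reorients and resolves cups, ensuring that the resulting terms are again diagram basis elements indexed by strictly shorter elements and that no negative $q$-powers or spurious terms survive. Verifying that this action is compatible with $\Phi$ and matches the behaviour prescribed by the weight/cup correspondence at the fork is where the type-specific work of the proof concentrates.
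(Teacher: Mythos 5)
Your proposal is correct in substance, but it reaches the theorem by a genuinely different logical route than the paper. The paper also inducts on $\ell(w)$ and its hard core is the same case analysis you anticipate (the local moves \eqref{a1}, \eqref{a2}, \eqref{b1} together with the three multiplication cases of \eqref{N}, with the branching node $s_0$ causing the genuinely type-$D$ configurations); but instead of invoking the abstract uniqueness characterization, it runs the Kazhdan--Lusztig recursion \eqref{Ns} directly: it shows diagrammatically that $\underline{N}_{ws_i}C_{s_i}$ equals $\Phi(C(w)_{M_D})$ on the nose (checking both that no multiplicities arise and that every orientation of the new diagram is accounted for), and then kills the correction term in \eqref{Ns} by observing that exactly one orienting weight --- namely $w$ itself, via the unique all-anticlockwise orientation --- contributes a constant term. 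This buys the sharper statement $\underline{N}_w=\underline{N}_{ws_i}C_{s_i}$, i.e.\ the Deodhar property recorded in the paper's subsequent remark, and it avoids ever transporting the bar involution to the diagram side. Your route instead decouples the two defining properties: triangularity is checked directly (your observations that $wC(w)$ is the unique orientation with $\operatorname{cl}=0$, and that $\operatorname{cl}(w'C(w))$ is even because antisymmetry forces clockwise cups to come in mirror pairs --- linked cups being oriented alike --- are both correct and are used only implicitly in the paper), while self-duality is obtained inductively from your surgery rule, which tolerates lower-order terms $\sum_v c_v x_v$ that in fact turn out not to occur. Two points of care for your version: take the inductive hypothesis to be the full identity $x_v=\underline{N}_v$ for all shorter $v$ (not merely bar-invariance of $x_v$), so that the lower terms in $x_{ws}C_s$ are genuine KL basis elements and bar-invariance of $x_w$ follows from bar-invariance of the $c_v$ without circularity; and note that establishing the surgery rule as an \emph{equality} requires exactly the two completeness checks the paper performs (no weight appears twice, and no unexpected orientations of $C(w)$ exist), so the total diagrammatic workload is essentially that of the paper's proof, repackaged.
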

In particular, the coefficient in $C(W)_{M_D}$ of a weight $w'$ is the Kazhdan-Lusztig polynomial, i.e. $$n_{w',w}=\begin{cases}q^{\frac{\operatorname{cl}(w'C(w)}{2}} & \mbox{if $w'C(w)$ is oriented}\\ 0 & \mbox{if $w'C(w)$ is not oriented}~~.\end{cases}$$
Note that the Kazhdan-Lusztig polynomials are all monomials which is a special feature of the Hermitian symmetric case, see e.g. \cite{Boe88}. The theorem will be applied to determine the dimension of homomorphism spaces between irreducible projective objects in the parabolic category $\O^\p_0$ using a generalized TQFT construction motivated by \cite{Khovanov}, \cite{Str09}.

Our calculus allows to describe the action of $\H$ on $M_D$ neatly. This action is not faithful, a description of the quotient of $\H$ which acts faithfully on $M_D$ will be described in terms of {\it decorated tangles} introduced by Green in \cite{Gre98} as a basis of the generalized Temperley-Lieb algebra of type $D_n$. Interestingly, the quotient will depend on the parity of $n$, but will always be cellular in the sense of \cite{GL}.  It is nice to see how these decorations result from a natural folding procedure after realizing the Weyl group of type $D_n$ inside the symmetric group $S_{2n}$. Passing to decorated tangles finally allows to study projective functors in $\O^\p(\mathfrak{so_{2n}})_0$, similar to the case of $\O^\p(\mathfrak{sl_{2n}})_0$ in \cite{BS08:III}.

 Indeed, similar result for type $A$ has been proven by Brundan and the second author in \cite[5.12]{BS08:I}. The case treated there uses a slightly easier diagram combinatorics to describe the categories of perverse sheaves on Grassmannians \cite{BS08:III}, of rational representations of the Lie supergroup $\operatorname{GL}(m|n)$ \cite{BS08:IV} and non-semisimple blocks of the walled Brauer algebras \cite{BS08:V}. A TQFT construction was used to define various generalized Khovanov algebras, Morita equivalent to the endomorphism rings of projective generators in the respective categories.

The case considered here describes the combinatorics of perverse sheaves on Grassmannians for type $D$, \cite{Braden}, and the combinatorics of the Brauer algebra, \cite{Martin}, \cite{CV}. Our main goal of this paper is nothing more than to provide the setup and framework needed later in \cite{ES} for a more thorough study of the category $\O_0^\p$ and the definition of a generalized Khovanov algebra of type $D$ proving some intriguing connections with isotropic Grassmannians, Springer fibers and Brauer algebras. The decorated tangle diagrams will provide then a concise tool to describe the combinatorics and geometry of type $D$ (as well as $B$ and $C$) Springer fibers associated to $2$-row partitions. The current paper is just setting up the framework for these further reaching results. Concerning the combinatorial results, there is some overlap with some of the independently obtained results of \cite{CV}. Our forthcoming applications are heavily based on \cite{CV} and influenced by their work.\\
{\bf Acknowledgements}: The first author thanks Anton Cox and Maud De Visscher for many interesting discussions and Volodymyr Mazorchuk for helpful comments.

\section{Preliminaries}\label{prelim}
\numberwithin{lem}{section}
We first recall some standard results for the Coxeter group of type $D_n$, $n\geq 4$, see e.g. \cite{BB05} or \cite{Hum92}.
Let $\W$ be the Weyl group of type $D_n$, i.e. $\W$ is the group generated by $\S=\{s_0,s_1,\hdots,s_{n-1}\}$ with relations $(s_is_j)^{m_{ij}}=e$ where
\begin{minipage}[b]{0.68\linewidth}
\begin{eqnarray*}
\hspace{-3pt}
m_{ij}=\begin{cases} 1 &\mbox{if} i=j,\\ 3 & \mbox{if $i$ connects $j$
in the Dynkin diagram $\Gamma$:}\\ 2 & \mbox{otherwise.} \end{cases}
\end{eqnarray*}
\vspace{-8pt}
\end{minipage}
\begin{minipage}[b]{0.3\linewidth}
\begin{pspicture}[shift=-2.2](-0.3,-2.5)(5.5,2.5)
\psline(0,1)(1,0)(2.4,0)(3,0)
\psline(0,-1)(1,0)
\psline(4.6,0)(5.2,0)
\pscircle[fillcolor=white,fillstyle=solid](0,1){2pt}
\pscircle[fillcolor=white,fillstyle=solid](0,-1){2pt}
\pscircle[fillcolor=white,fillstyle=solid](1,0){2pt}
\pscircle[fillcolor=white,fillstyle=solid](2.4,0){2pt}
\pscircle[fillcolor=white,fillstyle=solid](5.2,0){2pt}
\rput(0,-1.5){\footnotesize$0$}
\rput(0,1.5){\footnotesize$1$}
\rput(1,0.5){\footnotesize$2$}
\rput(2.4,0.5){\footnotesize$3$}
\rput(5.2,0.5){\footnotesize$n-1$}
\rput(3.8,0){$\cdots$}
\end{pspicture}
\end{minipage}

Let $l:\W\rightarrow \N$ be the length function. We fix the parabolic subgroup $\W_\p$ generated by $\S_\p=\{s_1,\hdots,s_{n-1}\}$ and denote the set of shortest representatives for $\W_\p\backslash\W$ by $\W^\p$. Hence $w\in\W^\p$ iff $l(sw)>l(w)$ for all $s_i\in\S_\p$. Write $\Z_2$ for the multiplicative group $\{+1,-1\}$. Then $\W$ is isomorphic to $G=S_n\ltimes\Z_2^n/I$, where $I=\{(\alpha_1,\ldots,\alpha_n)\mid\prod{\alpha_i}=1\}$ with the group law  $(\sigma,\alpha)\circ(\tau,\beta)=(\sigma\circ\tau, \tau(\alpha)\cdot\beta)$, where $\tau$ acts on $\alpha$ by permuting the components. This definition makes sense for any $n\geq0$. The isomorphism sends $\W_\p$ to the subgroup $S_n\ltimes \{e\}$. We fix the induced bijection
$$\W_\p\backslash \W \cong (S_n\ltimes \{e\})\backslash(S_n\ltimes \Z_2^n/I)=\Z_2^n/I$$ and identify the sets with the set $S(n)$ of $\{+,-\}$-sequences of length $n$ with an even number of minuses, e.g. for $n=4$:
\begin{eqnarray}
\label{halfseq}
\stackrel{|++++]}{e}\;\;\stackrel{|--++]}{s_0}\;\;\stackrel{|-+-+]}{s_0s_2}\;\;\stackrel{|-++-]}{s_0s_2s_3}\;\;
\stackrel{|+--+]}{s_0s_2s_1}\;\;\stackrel{|+-+-]}{s_0s_2s_3s_1}\;\;\stackrel{|++--]}{s_0s_2s_3s_1s_2}\;\;
\stackrel{|----]}{s_0s_2s_3s_1s_2s_0}.
\end{eqnarray}

The group $\W$ embeds into $S_{2n}$, viewed as the permutation group of the set $\{-n,\hdots,-1,1,\hdots,n\}$, via
\begin{eqnarray}
		\Psi:\quad \W \hookrightarrow  S_{2n} &&
s_i\mapsto\begin{cases}
t_0t_{1}t_{-1}t_0 &\text{if $i=1$,}\\		
t_{i}t_{-i} & \text{otherwise,}	
\end{cases}
		\end{eqnarray}
with the `simple transpositions' $t_0=(-1,1)$ and $t_i=(i,i+1)$ if $1\leq i\leq n-1$, and $t_i=(-i-1,-i)$ if $-(n-1)\leq i\leq -1$. The image of $\Psi$ is contained in the (type $B_n$) subgroup of signed permutations, i.e. permutations with $\sigma(-i)=-\sigma(i)$; the subgroup $\W_\p$ is sent to the subgroup $S_n\times S_n\subset S_{2n}$. Hence $\W$ acts naturally on  $\{+,-\}$-sequences of length $2n$ identifying $\W^\p$ with the set $S_{sym}(n)$ of  {\it even antisymmetric sequences}, e.g. the sign at place $i$ is the opposite of the sign at place $-i$, and there is an even number of minuses in the second half.
We write these sequences as $[\alpha_{-n},\hdots,\alpha_{-1}|\alpha_1,\hdots,\alpha_n]$ where the $|$ indicates the antisymmetry, so that the $|\alpha_1,\hdots,\alpha_n]$ are nothing else than our sequences as in \eqref{halfseq}. The $\{+,-\}$-sequence of length $2n$ associated to $w\in\W^\p$ is denoted by $\alpha_w$.

\begin{rem}
\label{Youngdiag}
The set $S_{sym}(n)$ is in canonical bijection with $Y_{sym}(n)$, the set of Young diagrams that are symmetric with respect to the main diagonal, have an even number of boxes on the main diagonal and fit into an $n\times n$ square. Given a $\{+,-\}$-sequence the Young diagram is constructed as follows: Start at the upper right corner of an $n\times n$-square. Reading the sequence from right to left go one step down for a minus and one step left for a plus. Reflect this path on the main diagonal and fill the space to the left of this path with boxes, e.g.
\begin{center}
	\begin{pspicture}[shift=-1.5](0,-0.5)(3,3.5)
	\psframe(0,0)(1,1)
	\psframe(0,1)(1,2)
	\psframe(1,1)(2,2)
	\psframe(0,2)(1,3)
	\psframe(1,2)(2,3)
	\psframe(2,2)(3,3)
	\end{pspicture}
	$\in Y_{sym}(3)$ corresponds to $|-,+,-]$:\quad\quad
	\begin{pspicture}[shift=-1.5](0,-0.5)(3,3.5)
	\psframe[linecolor=gray, linewidth=0.5pt](0,0)(3,3)
	\psline[linecolor=gray](0,3)(3,0)
	\psline[linecolor=blue, linewidth=1pt](3,3)(3,2)(2,2)(2,1)(1,1)(1,0)(0,0)
	\psline(0,0)(0,3)(3,3)
	\rput(2.3,1.5){\tiny$-$}
	\rput(2.5,2.3){\tiny$+$}
    	\end{pspicture}
\end{center}
The generator $s_i$ for $1\leq i\leq n-1$ corresponds to adding/deleting a box on the $i$th and $-i$th diagonal, and in the case $i=0$ to adding/deleting four boxes arranged in a $2\times 2$-array on the main diagonal.
 A reduced expression can be read off the Young diagram by writing the numbers of the diagonals into the boxes, working with boxes of size $2\times 2$ on the main diagonal, e.g.

\begin{center}
\begin{pspicture}[shift=-2.3](0,-0.5)(4,4.5)	
\psline[linewidth=1pt](0,0)(0,4)(4,4)
	\psframe(0,2)(1,3) \psframe(0,3)(1,4) \psframe(1,2)(2,3) \psframe(1,3)(2,4) 
	\psframe(0,1)(1,2) \psframe(2,3)(3,4)
	\psframe(1,1)(2,2) \psframe(2,2)(3,3)
	\psframe(0,0)(1,1) \psframe(3,3)(4,4)
	\end{pspicture}
$\quad\quad\rightsquigarrow\quad\quad$
	 \begin{pspicture}[shift=-2.3](0,-0.5)(4,4.5)\psline[linewidth=1pt](0,0)(0,4)(4,4)
	\psframe(0,2)(2,4) 
	\psframe(0,1)(1,2) \psframe(2,3)(3,4)
	\psframe(1,1)(2,2) \psframe(2,2)(3,3)
	\psframe(0,0)(1,1) \psframe(3,3)(4,4)
	\rput(1,3){\footnotesize $0$}
	\rput(0.5,1.5){\footnotesize $-2$}
	\rput(2.5,3.5){\footnotesize $2$}
	\rput(1.5,1.5){\footnotesize $-1$}
	\rput(2.5,2.5){\footnotesize $1$}
	\rput(0.5,0.5){\footnotesize $-3$}
	\rput(3.5,3.5){\footnotesize $3$}
	\end{pspicture}
$\quad\quad\rightsquigarrow \quad\quad s_0s_2s_3s_1$.\\
\end{center}
The resulting reduced expressions for $n=4$ are given by \eqref{halfseq}.
\end{rem}

We finally recall a few facts from \cite{KL79}, \cite{Soe97}. Let $\H$ be the Hecke algebra for  $\W$ over the ring $\L=\Z[q,q^{-1}]$ of formal Laurent polynomials. Hence it is free as $\L$-module on basis $H_w$, $w\in \W$, subject to the algebra relations $H_{w'}H_w=H_{w'w}$ if $l(w')+l(w)=l(w'w)$ and $H_s^2=(q^{-1}-q)H_s+H_e$ for $s\in \S$. A set of generators for $\H$ is $\{C_s=H_s+q\mid s\in\S\}$; the Hecke algebra $\H_\p$ for $W_\p$ is the subalgebra generated by the $C_s$, $s\in\S_\p$. Letting $C_s$ act on $\L$ by zero for all $s\in\S_p$, turns $\L$ into a right $\H_p$-module. Define $\Nmod=\Nmod^\p=\L\otimes_{\H_\p}\H$ with its  {\it standard basis} $\{N_w\mid w\in\W^\p\}$ where $N_w=1\otimes H_w$. The action of $C_s$ on $\Nmod$ is given by
\begin{eqnarray}
\label{N}
N_wC_s&=&\begin{cases}N_{ws}+qN_w & \mbox{if } ws\in W^p \mbox{ and } l(ws)>l(w)\\
N_{ws}+q^{-1}N_w & \mbox{if } ws\in W^p \mbox{ and } l(ws)<l(w)\\
0 & \mbox{if }ws\notin W^p.
\end{cases}
\end{eqnarray}
The $\mathbb{Z}$-linear involution $H_w\mapsto H_{w^{-1}}^{-1}$, $q\mapsto q^{-1}$ on $\H$ induces an involution on $\Nmod$. The {\it Kazhdan-Lusztig basis}, denoted $\{\underline{N}_w\mid w\in\W^\p\}$, is characterized by the fact that each basis element is fixed under this involution and satisfies $\underline{N}_w\in N_w +\sum_{w'\not=w} q\Z[q] N_{w'}$. The {\it Kazhdan-Lusztig polynomials} $n_{w,w'}$ are the entries in the base change matrix between the two bases, i.e. $$\underline{N}_w=\sum\limits_{w'} n_{w',w} N_{w'}.$$
Starting with $\underline{N}_e=N_e$ one can compute the $\underline{N}_w$'s inductively: assuming $\underline{N}_z$ is known for $l(z)<l(w)$, first choose a simple reflection $s_i$ such that $l(ws_i)<l(w)$, and write $\underline{N}_{ws_i}C_{s_i}=\sum_{z\in W^\p} m_z N_z$ in the standard basis. Then
\begin{eqnarray}
\label{Ns}
\underline{N}_w&=&\underline{N}_{ws_i}C_{s_i}-\sum_{z\in W^\p,\; l(z)<l(w)} m_z(0) \underline{N}_z.
\end{eqnarray}

\section{A graphical description of KL-polynomials}
In this section we will prove the main theorem giving an explicit graphical formula for the Kazhdan-Lusztig polynomials in our parabolic case.

\subsection{Extended graphical weights and cup diagrams}\label{chapter_cup_diagrams}
\begin{defi}\label{def_weight}
	The \textit{(extended) diagrammatical weight} $w_{\down\up}$ associated to an element $w\in\W^\p$ is obtained from $\alpha_w$ by first extending the full $\{+,-\}$-sequence of length $2n$ with $n$ pluses to the left and $n$ minuses to the right and then replacing every {$+$} with a {$\down$} and every {$-$} with an {$\up$}. We call the $n$ pluses and $n$ minuses by which we extended the sequence \textit{frozen}.
\end{defi}

\begin{defi}
Let $M_D$ be the free $\L$-module with basis  $w_{\down\up}$, $w\in\W^\p$ and $\H$-module structure induced from $\Nmod$ via the canonical isomorphism $N_w\mapsto w_{\down\up}$.
\end{defi}

To define a cup diagram associated to a $\{+,-\}$-sequence we first define a matching and modify this matching afterwards to get our cup diagram.
	\begin{defi} Consider the set of integers $P:=\{-2n,\hdots,-1,1,\hdots,2n\}\subset \Z\subset \R^2$ as points on the number line. A \textit{matching} is a diagram, up to isotopy, consisting of $2n$ non-intersecting arcs in the lower half plane $\R\times\R_{\leq 0}$ connecting the $4n$ points without intersections and such that any point in $P$ is connected with precisely one other point in $P$ via an arc.
	\end{defi}
	\begin{defi} Given a $\{+,-\}$-sequence $\alpha$ the \textit{corresponding matching} $M(\alpha)$ is defined as follows:
	First extend $\alpha$ by $n$ pluses to the left and $n$ minuses to the right. Then construct a matching where each arc connects a {$+$} with a {$-$} and the point of the arc labeled  {$+$} is smaller than the one labeled {$-$}.
	\end{defi}
		\begin{lem}
	Given a $\{+,-\}$-sequence $\alpha$ then $M(\alpha)$ exists and is well-defined.
	\begin{proof}
	Note that a pair $p$ of a {$+$} directly to the left of a {$-$} has to be connected by an arc. Since otherwise the {$-$} would be connected to a {$+$} further to the left making it impossible to connect the original {$+$} to a {$-$} with a higher index without intersecting the arc.
Ignoring the pair $p$ (and the arc) gives a shorter $\{+,-\}$-sequence and we are done by induction (noting that there is always a pair $p$ for extended diagrammatical weights).
	\end{proof}
	\end{lem}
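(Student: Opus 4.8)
The plan is to recognize $M(\alpha)$ as the unique non-crossing parenthesization of a balanced bracket word, reading each $+$ as an opening and each $-$ as a closing bracket. First I would isolate the structural input that makes the construction possible at all: after extending $\alpha$ by $n$ pluses on the left and $n$ minuses on the right, the resulting length-$4n$ word on $P$ is \emph{balanced}, i.e.\ it contains equally many $+$'s and $-$'s and every prefix contains at least as many $+$'s as $-$'s. The count condition forces $\alpha$ itself to carry exactly $n$ minuses, which holds for the (even antisymmetric) sequences in play; the prepended block of $n$ pluses then guarantees the prefix condition, since after those pluses the running balance equals $n$ plus the prefix balance of $\alpha$, and the latter never drops below $-n$. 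The appended $n$ minuses bring the total balance back to $0$. This is the only place the particular shape of the extension enters.

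Next I would prove a single \emph{forced-pair} observation, which yields existence and uniqueness at once. Suppose $j$ and $j+1$ are adjacent points labelled $+$ and $-$ respectively. In any admissible matching the $+$ at $j$ is joined to some $-$ at a position $k>j$, and the $-$ at $j+1$ is joined to some $+$ at a position $l\leq j$. If these two points were not matched to each other, then $l\neq j$ and $k\neq j+1$, so in fact $l<j<j+1<k$; the arc $(l,j+1)$ then separates the two endpoints of the arc $(j,k)$, forcing the two arcs to cross in the lower half plane and contradicting admissibility. Hence every adjacent $+-$ pair is forced to be an arc. With this in hand the lemma follows by induction on the number of arcs. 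For existence, a nonempty balanced word always contains an adjacent $+-$ pair (take the leftmost $-$, whose immediate predecessor must be a $+$); I connect that pair by a small arc, delete both points, observe that the shortened word is again balanced, and invoke the induction hypothesis on the remainder, drawing the deleted arc just below the two adjacent points so that it crosses nothing. For uniqueness, the forced-pair observation shows that any admissible matching must contain exactly this arc, so deleting it reduces to a strictly shorter balanced word on which the matching is already unique; reattaching the forced arc gives uniqueness of the original. The empty word is the trivial base case.

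The only genuinely delicate step is the forced-pair crossing argument, and in particular keeping the orientation convention ($+$ strictly left of its $-$) consistent with the non-crossing condition; everything else is the standard Dyck-word bookkeeping. I would also remark in passing that planarity in the lower half plane, up to isotopy, is equivalent to the purely combinatorial non-crossing condition on the chords, so no separate topological argument beyond isotopy invariance is needed.
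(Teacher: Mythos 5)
Your proof is correct and follows essentially the same route as the paper: the key step in both is the forced-pair observation that an adjacent $+-$ must be joined by an arc (else the two arcs would interleave and cross), followed by induction on removing that pair. The only difference is cosmetic — you make explicit the Dyck-word bookkeeping (the extended sequence is balanced, removal preserves balance, and a nonempty balanced word has an adjacent $+-$ pair) that the paper compresses into its parenthetical remark about extended diagrammatical weights.
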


		\begin{defi}
		The \textit{cup diagram} $C(w)$ associated with $w\in\W^\p$ is obtained from the matching $M(\alpha_w)$ by `pairing' the arcs crossing the middle. Starting from the point $(0,0)$ we take the first two arcs crossing the middle, exchange two bounary points, and put a decoration on the created intersection indicating that they are linked (see e.g. \eqref{a2}).
We continue this process with the next two arcs until no unlinked arcs crossing the middle are left. The resulting arcs are also called {\it cups}. We denote the set of cup diagrams attached to $\W^\p$ by $C(\W^\p)$.
		\end{defi}
The following shows that $C(w)$ is well-defined:
\begin{lem} For all $w\in\W^\p$ the number of arcs crossing the middle in $M(\alpha_w)$ is even. The assignment $w \mapsto C(w)$ defines a bijection
	$\W^\p \rightarrow  C(\W^\p).$
\begin{proof}
The number of arcs crossing the middle equals $2n-2\sum_{\alpha_i=+1, i>0}1$.
(It simply counts the number of pluses in the right half of the sequence noting that every plus there is connected to a minus in the same half, whereas the remaining minuses have to be connected to the left half.) To two different elements in $\W^\p$ we associate two different $\{+,-\}$-sequences. To these sequences we obviously associate two different matchings and as a consequence two different cup diagrams. So the map is injective. It is surjective by definition.
\end{proof}
\end{lem}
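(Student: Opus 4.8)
The statement splits into two essentially independent parts: the parity claim, which guarantees that the cup-diagram construction terminates with every crossing arc linked to a partner, and the bijectivity claim. I would treat the parity claim first, by an explicit count. Extending $\alpha_w$ to the length-$4n$ sequence on $P$ as prescribed, I record the signs in the two halves. By the antisymmetry $\alpha_{-i}=-\alpha_i$ defining the sequences in $S_{sym}(n)$, if $k$ denotes the number of pluses among $\alpha_1,\dots,\alpha_n$, then the right half (positions $1,\dots,2n$) contains exactly $k$ pluses, since the $n$ frozen entries there are all minuses, while the left half contains $n+(n-k)=2n-k$ pluses. Since in $M(\alpha_w)$ every arc joins a $+$ at its smaller endpoint to a $-$ at its larger endpoint, an arc crosses the middle precisely when its $+$-endpoint lies in the left half and its $-$-endpoint in the right half. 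Each of the $k$ pluses in the right half is joined to a minus strictly to its right, hence again in the right half, using up $k$ of the right-half minuses; the remaining $(2n-k)-k=2n-2k$ right-half minuses must be joined to pluses in the left half. Thus the number of crossing arcs equals $2n-2k$, which is even.

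For the bijection I would factor the assignment as $w\mapsto\alpha_w\mapsto M(\alpha_w)\mapsto C(w)$ and check injectivity of each stage; surjectivity onto $C(\W^\p)$ holds tautologically, since $C(\W^\p)$ is defined as the image. The first map is a bijection $\W^\p\to S_{sym}(n)$ already recorded in the preliminaries. For the second map, the key observation is that a matching reconstructs its defining sequence: in $M(\alpha)$ the smaller endpoint of every arc is labelled $+$ and the larger endpoint $-$, so reading off the label of each of the $4n$ points from the arc incident to it recovers the full extended sequence, and deleting the frozen ends recovers $\alpha$. Hence distinct sequences yield distinct matchings, and $\alpha\mapsto M(\alpha)$ is injective.

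The one step that deserves real care, and which I expect to be the main obstacle, is the injectivity of the passage $M(\alpha_w)\mapsto C(w)$ from a matching to its cup diagram, since this step actively redraws arcs, exchanging boundary points of successive crossing pairs, and introduces decorations. Here I would argue that the construction is reversible: the decorations mark exactly the linked pairs, and the pairing is performed in a fixed deterministic order starting from $(0,0)$, so from $C(w)$ one can unlink the decorated pairs in the reverse order and restore the original crossing arcs, thereby recovering $M(\alpha_w)$. Composing these reconstructions shows that $C(w)$ determines $\alpha_w$, hence $w$, which gives injectivity; together with the tautological surjectivity this yields the claimed bijection. The subtle point to pin down rigorously is that the bookkeeping of which two strands are exchanged at each decoration is unambiguous, so that the reverse procedure is well defined; this is precisely where the parity statement is used, ensuring that no crossing arc is left without a partner.
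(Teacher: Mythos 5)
Your proposal is correct and follows essentially the same route as the paper: the parity claim via the identical count (your $2n-2k$ is exactly the paper's $2n-2\sum_{\alpha_i=+1,\,i>0}1$, justified by the same observation that right-half pluses pair within the right half), and the bijection via the same factorization $w\mapsto\alpha_w\mapsto M(\alpha_w)\mapsto C(w)$ with tautological surjectivity. The only difference is that you spell out the steps the paper labels ``obviously'' and ``as a consequence'' --- reconstructing the sign sequence from the matching via the smaller-endpoint rule, and inverting the linking procedure by unlinking decorated pairs in reverse order --- which is a sound elaboration rather than a new argument.
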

	
In Definition \ref{def_weight}, to every element in $\W^\p$ we associated a weight. Because of the extension we can glue the weight associated to a $w'\in\W^\p$ on top of a cup diagram $C(w)$. This gives us a new diagram, denoted $w'C(w)$.
	\begin{defi}
		A cup is called \textit{oriented} if it is labeled by exactly one {$\up$} and one {$\down$}. This goes for linked cups, too. They are treated as if they were single cups. The decoration has no meaning for the orientation. A cup diagram $w'C(w)$ is an \textit{oriented cup diagram} if each cup is oriented. We denote by $\operatorname{cl}(w'C(w))$ the number of clockwise oriented cups in an oriented cup diagram $w'C(w)$, i.e. oriented cups of the form~~$\begin{pspicture}[shift=-0.2](0,0)(1,-0.5)\psellipticarc(0.5,0)(0.5,0.5){180}{0}\rput(0,0){\footnotesize\boldmath$\up$} \rput(1,0.2){\footnotesize\boldmath$\down$}\end{pspicture}$~.
	\end{defi}
	\begin{rem}
		For linked cups the antisymmetry of the weights implies that both cups are oriented in the same direction.
	\end{rem}
For a cup diagram $C(w)$ define
\begin{eqnarray}
C(w)_{M_D}&=&\sum v^{\frac{\operatorname{cl}(w'C(w))}{2}}w'_{\down\up},
\end{eqnarray}
where the sum is over all $w'_{\down\up}\in\W^\p$ such that $w'C(w)$ is oriented.\\

Next we analyze the difference between the cup diagram $C(w)$ and the cup diagram $C(ws_i)$ in the case that $l(ws_i)>l(w)$ and $w,ws_i\in\W^\p$. If multiplying $w$ by $s_i$ yields a longer element in $\W^\p$, this means for the $\{+,-\}$-sequence one of the following two cases:
\begin{eqnarray*}
|\hdots-+\hdots]\leadsto|\hdots+-\hdots],&&
|++\hdots]\leadsto|--\hdots].
\end{eqnarray*}
(To translate this into cup diagrams, consider the reflected antisymmetric part as well and add the pluses resp. minuses needed to connect all the given pluses and minuses according to our rules for cup diagrams.)
In the first case we have to connect the minus in the first sequence to a plus to the left. It matters whether this plus is in the first or second half of the sequence, leading to the cases \eqref{a1} and \eqref{a2}:
\begin{eqnarray}
\label{a1}
\begin{pspicture}[shift=-0.5](-4,0)(4,-0.5)
\psline[linecolor=gray, linewidth=0.5pt](0,1)(0,-1.2)
\psellipticarc(1,0)(0.5,0.5){180}{0}
\psellipticarc(3,0)(0.5,0.5){180}{0}
\rput(1.5,0.3){\tiny$i$}
\rput(2.5,0.3){\tiny$i+1$}
\psellipticarc(-1,0)(0.5,0.5){180}{0}
\psellipticarc(-3,0)(0.5,0.5){180}{0}
\rput(-1.5,0.3){\tiny$-i$}
\rput(-3.5,0.7){\tiny$+$}
\rput(-2.5,0.7){\tiny$-$}
\rput(-1.5,0.7){\tiny$+$}
\rput(-0.5,0.7){\tiny$-$}
\rput(0.5,0.7){\tiny$+$}
\rput(1.5,0.7){\tiny$-$}
\rput(2.5,0.7){\tiny$+$}
\rput(3.5,0.7){\tiny$-$}
\end{pspicture}
&\leadsto&
\begin{pspicture}[shift=-1.5](-4.5,0)(3,-1.5)
\psline[linecolor=gray, linewidth=0.5pt](-0.5,1)(-0.5,-1.2)
\psellipticarc(1.5,0)(0.5,0.5){180}{0}
\psellipticarc(1.5,0)(1.5,1.2){180}{0}
\rput(1,0.3){\tiny $i$}
\rput(2,0.3){\tiny $i+1$}
\psellipticarc(-2.5,0)(0.5,0.5){180}{0}
\psellipticarc(-2.5,0)(1.5,1.2){180}{0}
\rput(-2,0.3){\tiny $-i$}
\rput(-4,0.7){\tiny$+$}
\rput(-3,0.7){\tiny$+$}
\rput(-2,0.7){\tiny$-$}
\rput(-1,0.7){\tiny$-$}
\rput(0,0.7){\tiny$+$}
\rput(1,0.7){\tiny$+$}
\rput(2,0.7){\tiny$-$}
\rput(3,0.7){\tiny$-$}
\end{pspicture}\\
\label{a2}
\begin{pspicture}[shift=-1.5](-4.5,0)(4.5,-1.5)
\psline[linecolor=gray, linewidth=0.5pt](0,0)(0,-1.7)
\psellipticarc(2.5,0)(0.5,0.5){180}{0}
\psellipticarc(1.5,0)(2.5,1.5){180}{0}
\rput(1,0.3){\tiny$i$}
\rput(2,0.3){\tiny$i+1$}
\psellipticarc(-2.5,0)(0.5,0.5){180}{0}
\psellipticarc(-1.5,0)(2.5,1.5){180}{0}
\rput(-1,0.3){\tiny$-i$}
\pscircle*(0,-1.2){2pt}
\rput(-4,0.7){\tiny$+$}
\rput(-3,0.7){\tiny$+$}
\rput(-2,0.7){\tiny$-$}
\rput(-1,0.7){\tiny$+$}
\rput(1,0.7){\tiny$-$}
\rput(2,0.7){\tiny$+$}
\rput(3,0.7){\tiny$-$}
\rput(4,0.7){\tiny$-$}
\end{pspicture}
&\leadsto&
\begin{pspicture}[shift=-1.5](-4.5,0)(4,-1.5)
\psline[linecolor=gray, linewidth=0.5pt](0,0)(0,-1.7)
\psellipticarc(1.5,0)(0.5,0.5){180}{0}
\psellipticarc(0.5,0)(3.5,1.5){180}{0}
\rput(1,0.3){\tiny$i$}
\rput(2,0.3){\tiny$i+1$}
\psellipticarc(-1.5,0)(0.5,0.5){180}{0}
\psellipticarc(-0.5,0)(3.5,1.5){180}{0}
\rput(-1,0.3){\tiny$-i$}
\pscircle*(0,-1.47){2pt}
\rput(-4,0.7){\tiny$+$}
\rput(-3,0.7){\tiny$+$}
\rput(-2,0.7){\tiny$+$}
\rput(-1,0.7){\tiny$-$}
\rput(1,0.7){\tiny$+$}
\rput(2,0.7){\tiny$-$}
\rput(3,0.7){\tiny$-$}
\rput(4,0.7){\tiny$-$}
\end{pspicture}
\end{eqnarray}
In the second case we just complete the cups and get
\begin{eqnarray}
\label{b1}
\begin{pspicture}[shift=-0.5](-4.5,0)(3.5,-0.5)
\psline[linecolor=gray, linewidth=0.5pt](-0.5,0)(-0.5,-1.2)
\psellipticarc(-2.5,0)(0.5,0.5){180}{0}
\psellipticarc(-2.5,0)(1.5,1.2){180}{0}
\psellipticarc(1.5,0)(0.5,0.5){180}{0}
\psellipticarc(1.5,0)(1.5,1.2){180}{0}
\rput(0,0.3){\tiny$1$}
\rput(1,0.3){\tiny$2$}
\rput(0,0.7){\tiny$+$}
\rput(1,0.7){\tiny$+$}
\end{pspicture}
\leadsto
\begin{pspicture}[shift=-2.5](-4.5,0)(2,-2.5)
\psline[linecolor=gray, linewidth=0.5pt](0,0)(0,-2.2)
\psellipticarc(0.5,0)(1.5,1){180}{0}
\psellipticarc(-0.5,0)(1.5,1){180}{0}
\psellipticarc(0.5,0)(3.5,2){180}{0}
\psellipticarc(-0.5,0)(3.5,2){180}{0}
\pscircle*(0,-0.95){2pt}
\pscircle*(0,-1.97){2pt}
\rput(1,0.3){\tiny $1$}
\rput(2,0.3){\tiny $2$}
\rput(1,0.7){\tiny $-$}
\rput(2,0.7){\tiny $-$}
\end{pspicture}
\end{eqnarray}
%
%
%
%
\subsection{The Kazhdan-Lusztig basis described as cup diagrams}
The following provides our graphical description of the Kazhdan-Lusztig basis.
\begin{thm}[Graphical Kazhdan-Lusztig basis]\label{main_theorem} The image of a cup diagram under the isomorphism $\Phi$ is the corresponding Kazhdan-Lusztig basis element.
$$\Phi(C(w)_{M_D})=\underline{N}_w.$$
and the Kazhdan-Lusztig polynomial $n_{v,w}$ equals
\begin{eqnarray*}
n_{v,w}&=&\begin{cases} q^\frac{\operatorname{cl}(vC(w))}{2} & \mbox{if } vC(w)\mbox{ is oriented}\\ 0 & \mbox{if } vC(w)\mbox{ is not oriented .} \end{cases}
\end{eqnarray*}
\end{thm}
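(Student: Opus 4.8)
The plan is to prove the identity $\Phi(C(w)_{M_D})=\underline{N}_w$ by induction on the length $l(w)$, exploiting the inductive characterisation of the Kazhdan-Lusztig basis given by the recursion \eqref{Ns} together with the explicit action \eqref{N}. Since $\Phi$ is by construction an isomorphism of $\H$-modules, it suffices to reproduce that recursion on the cup-diagram side. For the base case $w=e$ one checks directly that $C(e)$ admits a unique orientation, namely the one furnished by $e_{\down\up}$ itself, and that this orientation is entirely anticlockwise; hence $C(e)_{M_D}=e_{\down\up}$ and $\Phi(C(e)_{M_D})=N_e=\underline{N}_e$.

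For the inductive step, fix $w\in\W^\p$ with $l(w)>0$ and choose a simple reflection $s_i$ with $l(ws_i)<l(w)$. A standard property of minimal coset representatives then guarantees that $u:=ws_i$ again lies in $\W^\p$, and since $l(u)<l(w)$ the induction hypothesis gives $\Phi(C(u)_{M_D})=\underline{N}_u$. Passing from $u$ to $w=us_i$ is precisely the length-increasing move analysed in \eqref{a1}, \eqref{a2} and \eqref{b1}, so the cup diagram $C(w)$ differs from $C(u)$ only inside a local window around the positions $\pm i,\pm(i+1)$ (or $\pm 1$ when $i=0$). The goal is to establish the diagrammatic identity
$$C(u)_{M_D}\,C_{s_i}=C(w)_{M_D}+\sum_{z}\,m_z(0)\,C(z)_{M_D},$$
where $z$ ranges over $\W^\p$ with $l(z)<l(w)$ and the $m_z(0)\in\Z$ are the constant terms occurring in \eqref{Ns}. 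Granting this, applying the $\H$-linear map $\Phi$ and invoking the induction hypothesis on $C(u)$ and on each $C(z)$ turns the left-hand side into $\underline{N}_u C_{s_i}$ and the right-hand side into $\Phi(C(w)_{M_D})+\sum_z m_z(0)\underline{N}_z$; comparison with \eqref{Ns} then yields $\Phi(C(w)_{M_D})=\underline{N}_w$.

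The heart of the argument is the local orientation calculation behind the displayed identity. Because $C_{s_i}$ acts on a weight only through its arrows at $\pm i,\pm(i+1)$ via \eqref{N}, and because $C(u)$ and $C(w)$ coincide outside the local window, each orientation of $C(u)$ can be tracked through the surgery to one or two orientations of $C(w)$, recording the resulting change in the number of clockwise cups against the power of $q$ supplied by \eqref{N}. One then checks, case by case — the two subcases \eqref{a1} (the partner plus lies in the same half of the sequence) and \eqref{a2} (it lies in the opposite half, creating a decorated/linked cup), together with the boundary move \eqref{b1} for $i=0$ — that the weighted sum over orientations of $C(u)$, after applying $C_{s_i}$, reassembles exactly into $\sum_{w'}q^{\operatorname{cl}(w'C(w))/2}w'_{\down\up}$ plus terms supported on strictly shorter cup diagrams with constant coefficient. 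Here one uses that $\operatorname{cl}(w'C(w))$ is always even: non-crossing cups occur in mirror pairs which, by the antisymmetry of the weights, are oriented in the same direction, while linked cups are oriented in the same direction by the Remark; thus clockwise cups come in pairs and $\operatorname{cl}/2$ is a non-negative integer, so all exponents are well defined.

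The main obstacle I expect is exactly this bookkeeping of orientations and $q$-powers across the surgery: keeping track of the decorations on linked cups in case \eqref{a2}, treating the four-box move \eqref{b1} associated to $s_0$, and — most importantly — verifying that the lower-order terms really organise into complete cup-diagram elements $C(z)_{M_D}$ with constant coefficient rather than into an unstructured combination of standard basis vectors, since it is this clustering that makes the recursion \eqref{Ns} directly applicable. Once the identity $\Phi(C(w)_{M_D})=\underline{N}_w$ is secured, the formula for the Kazhdan-Lusztig polynomials follows immediately: writing $\underline{N}_w=\sum_{v}n_{v,w}N_v$ and comparing coefficients with $\Phi(C(w)_{M_D})=\sum_{v}q^{\operatorname{cl}(vC(w))/2}N_v$, where the sum runs over those $v\in\W^\p$ for which $vC(w)$ is oriented, gives $n_{v,w}=q^{\operatorname{cl}(vC(w))/2}$ when $vC(w)$ is oriented and $n_{v,w}=0$ otherwise.
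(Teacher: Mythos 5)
Your proposal follows essentially the same route as the paper's proof: induction on $l(w)$ organized around the recursion \eqref{Ns}, with the local surgery analysis of \eqref{a1}, \eqref{a2}, \eqref{b1} matched case by case against the multiplication rules \eqref{N}, and the same mirror-pair/linked-cup parity argument making $\operatorname{cl}/2$ well defined. The only real difference is that the paper carries the orientation bookkeeping one step further and shows that the correction sum in your displayed identity is empty --- $w$ is the \emph{unique} weight orienting $C(w)$ with no clockwise cups, so all $m_z(0)$ vanish and $\underline{N}_w=\underline{N}_{ws_i}C_{s_i}$ outright (the Deodhar property noted in the remark following the theorem), which resolves exactly the ``main obstacle'' you flag.
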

\begin{proof}
We prove this by induction on $l(w)$, starting with $\underline{N}_e=N_e$, i.e. $n_{v,e}=\delta_{v,e}$ for $\nu\in\W^\p$. We identified $e=[\underbrace{-\hdots-}_{n}|\underbrace{+\hdots+}_{n}]$, so we get the cup diagram

\begin{eqnarray}
\begin{pspicture}(-6,0)(6,-1.7)
\psline[linecolor=gray, linewidth=0.5pt](0,0)(0,-1.7)
\psellipticarc(4,0)(0.5,0.5){180}{0}
\psellipticarc(4,0)(1.5,0.9){180}{0}
\psellipticarc(4,0)(3.5,1.5){180}{0}
\psellipticarc(-4,0)(0.5,0.5){180}{0}
\psellipticarc(-4,0)(1.5,0.9){180}{0}
\psellipticarc(-4,0)(3.5,1.5){180}{0}
\rput(1.5,0){$\hdots$} \rput(6.5,0){$\hdots$}
\rput(-1.5,0){$\hdots$} \rput(-6.5,0){$\hdots$}
\rput(0.5,0.2){\tiny $1$} \rput(3.5,0.2){\tiny $n$} \rput(7.5,0.2){\tiny $2n$}
\rput(-0.5,0.2){\tiny $-1$} \rput(-3.5,0.2){\tiny $-n$} \rput(-7.5,0.2){\tiny $-2n$}
\end{pspicture}
\end{eqnarray}
Now we determine the possible weights. First, we have to take the ``frozen'' orientations into account. Between $-2n$ and $-(n+1)$ we get $n$ {$\down$}'s and between $n+1$ and $2n$ we get $n$ {$\up$}'s. So the only possibility to get an oriented diagram is the weight $[\underbrace{\down\hdots\down}_{n\mbox{\tiny~~frozen}}\underbrace{\up\hdots\up}_{n}|\underbrace{\down\hdots\down}_{n}\underbrace{\up\hdots\up}_{n\mbox{\tiny~~frozen}}]$ which corresponds to $e$. There are no clockwise oriented cups, so our formula gives exactly the polynomials $n_{v,e}$, and hence the basis for our induction.\\
Assume the formula holds for all $w'\in \W^\p$, $l(w')<l(w)$. Choose $s_i$ such that $l(ws_i)<l(w)$ and recall \eqref{Ns}.
We first compare the effect of multiplication with $C_{s_i}$ on the algebraic with the diagrammatic side on $\underline{N}_{ws_{i}}$.
Assume $N_y$ appears in $\underline{N}_{ws_{i}}$ with some nontrivial coefficient (which by induction is a power of $q$).
From \eqref{N}, we have three cases:
\begin{enumerate}
\item $l(ys_{i})>l(y)$ and $ys_{i}\in\W^\p$:\\
In this case $N_{y}C_{s_i}=N_{ys_i}+vN_{y}$. We recall that for a weight getting longer means exchanging an {$\up$} at $i$ with a {$\down$} at places $i+1$ for $1\leq i\leq n-1$ resp. exchanging two {$\down$}'s at places $1$ and $2$ into two {$\up$}'s if $i=0$. We work through the three cases \eqref{a1}, \eqref{a2}, \eqref{b1} describing the passage from $C(w)$ to $C(ws_i)$.
		\begin{ite}
		\item
				For~~\begin{pspicture}(-3.5,0.5)(3.5,-0.5)
				\psline[linecolor=gray, linewidth=0.5pt](0,0.5)(0,-0.7)
				\psellipticarc(1,0)(0.5,0.5){180}{0}
				\psellipticarc(3,0)(0.5,0.5){180}{0}
				\rput(1.5,0.6){\tiny$i$}
				\rput(2.5,0.6){\tiny$i+1$}
				\psellipticarc(-1,0)(0.5,0.5){180}{0}
				\psellipticarc(-3,0)(0.5,0.5){180}{0}
				\rput(-1.5,0.3){\tiny$-i$}
				\rput(1.48,0){\footnotesize\boldmath$\up$}
				\rput(2.5,0.2){\footnotesize\boldmath$\down$}
				\end{pspicture}~~to be oriented we need to have the orientation~~ \begin{pspicture}(-3.5,0.6)(3.5,-0.5)
				\psline[linecolor=gray, linewidth=0.5pt](0,0.5)(0,-0.7)
				\psellipticarc(1,0)(0.5,0.5){180}{0}
				\psellipticarc(3,0)(0.5,0.5){180}{0}
				\rput(1.5,0.6){\tiny$i$}
				\rput(2.5,0.6){\tiny$i+1$}
				\psellipticarc(-1,0)(0.5,0.5){180}{0}
				\psellipticarc(-3,0)(0.5,0.5){180}{0}
				\rput(-3.5,0.2){\footnotesize\boldmath$\down$}
				\rput(-2.52,0){\footnotesize\boldmath$\up$}
				\rput(-1.5,0.2){\footnotesize\boldmath$\down$}
				\rput(-0.52,0){\footnotesize\boldmath$\up$}
				\rput(0.5,0.2){\footnotesize\boldmath$\down$}
				\rput(1.48,0){\footnotesize\boldmath$\up$}
				\rput(2.5,0.2){\footnotesize\boldmath$\down$}
				\rput(3.48,0){\footnotesize\boldmath$\up$}
				\end{pspicture}~.
None of these cups are oriented clockwise. The part of the weight assigned to $y$ that is relevant for our calculation is $\down\up\down\up|\down\up\down\up$ (involving only the above four cups). The corresponding part of $ys_i$ is then $\down\down\up\up|\down\down\up\up$. Putting them on top of $C(ws_i)$ gives
\begin{eqnarray*}
				\begin{pspicture}[shift=-1](-4,1)(3,-1.5)
				\psline[linecolor=gray, linewidth=0.5pt](-0.5,0.5)(-0.5,-1.2)
				\psellipticarc(1.5,0)(0.5,0.5){180}{0}
				\psellipticarc(1.5,0)(1.5,1.2){180}{0}
				\psellipticarc(-2.5,0)(0.5,0.5){180}{0}
				\psellipticarc(-2.5,0)(1.5,1.2){180}{0}
				\rput(1,0.6){\tiny$i_k$}
				\rput(2,0.6){\tiny$i_k+1$}
				\rput(-4,0.2){\footnotesize\boldmath$\down$}
				\rput(-3,0.2){\footnotesize\boldmath$\down$}
				\rput(-2.02,0){\footnotesize\boldmath$\up$}
				\rput(-1.03,0){\footnotesize\boldmath$\up$}
				\rput(0,0.2){\footnotesize\boldmath$\down$}
				\rput(1,0.2){\footnotesize\boldmath$\down$}
				\rput(1.98,0){\footnotesize\boldmath$\up$}
				\rput(2.97,0){\footnotesize\boldmath$\up$}
				\end{pspicture}
&\quad \text{resp.} \quad&
				\begin{pspicture}[shift=-1](-4,1)(3,-1.5)
				\psline[linecolor=gray, linewidth=0.5pt](-0.5,0.5)(-0.5,-1.2)
				\psellipticarc(1.5,0)(0.5,0.5){180}{0}
				\psellipticarc(1.5,0)(1.5,1.2){180}{0}
				\psellipticarc(-2.5,0)(0.5,0.5){180}{0}
				\psellipticarc(-2.5,0)(1.5,1.2){180}{0}
				\rput(1,0.6){\tiny$i_k$}
				\rput(2,0.6){\tiny$i_k+1$}
				\rput(-4,0.2){\footnotesize\boldmath$\down$}
				\rput(-3,0){\footnotesize\boldmath$\up$}
				\rput(-2,0.2){\footnotesize\boldmath$\down$}
				\rput(-1.03,0){\footnotesize\boldmath$\up$}
				\rput(0,0.2){\footnotesize\boldmath$\down$}
				\rput(1,0){\footnotesize\boldmath$\up$}
				\rput(2,0.2){\footnotesize\boldmath$\down$}
				\rput(2.97,0){\footnotesize\boldmath$\up$}
				\end{pspicture}
\end{eqnarray*}
Either of them is oriented, the first has no clockwise oriented cup; whereas the second has two additional clockwise oriented cups as expected. The cases \eqref{a2} and \eqref{b1} are treated similarly.
		\end{ite}
		
\item $l(ys_{i})<l(y)$ and $ys_{i}\in\W^\p$:\\
		In this case we have the multiplication rule $N_{y}C_{s_{i}}=N_{ys_{i}}+q^{-1}N_{y}$.
We recall that getting shorter for a weight means exchanging an {$\up$} from $i+1$ with a {$\down$} at $i$ for $1\leq i\leq n-1$ resp. exchanging two {$\up$}'s at $1$ and $2$ to two {$\down$}'s if $i=0$. Now the considerations are analogous to the ones before. Again one has to work through the three cases and count the change in clockwise oriented cups and obtains $\operatorname{cl}(ys_{i}C(w))=\operatorname{cl}(yC(ws_{i}))$ and $\operatorname{cl}(yC(w))=\operatorname{cl}(yC(ws_{i}))-2$ as expected.
	\item $ys_{i}\notin\W^\p$:\\
		This time the multiplication rule is $N_{y}C_{s_{i}}=0$. If $y\in\W^\p$ but $ys_{i}\notin\W^\p$, then the weight associated with $y$ does not change when applying $s_i$. For $1\leq i\leq n-1$ this means we have either two {$\down$}'s or two {$\up$}'s at $i$ and $i+1$; and for $i=0$ we either have a {$\down$} at $1$ and an {$\up$} at $2$ or vice versa. The same procedure as above indeed yields only diagrams which are not oriented.
\end{enumerate}
Hence our calculations show that diagrammatically we get all the contributions predicted algebraically. Moreover, we do not get any multiplicities, since one verifies easily on the diagram side using \eqref{a1}, \eqref{a2}, \eqref{b1} that if $N_y$ occurs in $\underline{N}_{ws_i}$ then $N_{xs_i}$ does not occur in   $\underline{N}_{ws_i}$ for any $y\in \W^\p$.
The only question left is whether there are any weights providing an oriented diagram besides those from the calculations above. But looking at cases \eqref{a1} and \eqref{a2} we see that all possible orientations of the new diagram occur, hence no additional unexpected weights occur. Therefore the diagrammatical formulas determine $\underline{N}_{ws_i}C_{s_i}$\\
Finally, we may have to subtract possibly some $\underline{N}_z$, $z\not=w$ as in \eqref{Ns},
i.e. those terms arising from a constant term in the coefficient. In terms of diagrams this would mean that we have an oriented cup diagram with no clockwise oriented cups. But there is exactly one weight leading to such an oriented cup diagram, namely the weight corresponding to $w$. Hence there does not occur any other polynomials with constant term and we get  $\underline{N}_w=\underline{N}_{ws_{i}}C_{s_{i}}$. The coefficients in $C(w)_{M_D}$ are the Kazhdan-Lusztig polynomials and the image under $\Phi$ of a cup diagram is the Kazhdan-Lusztig basis element.
\end{proof}
\begin{rem}
The proof of Theorem~\ref{main_theorem} shows that for all $w\in\W^\p$ with $w=s_{i_1}\cdots s_{i_k}$ an arbitrarily chosen reduced expression we have $\underline{N}_w=\underline{N}_eC_{s_{i_1}}\cdots C_{s_{i_k}}$. Elements for which the Kazhdan-Lusztig basis element has this property are called \textit{Deodhar} and were first studied in \cite{Deo90}. Jointly with Warrington and Jones respectively, Billey classified Deodhar elements first for type $A$ in \cite{BW01} and then for general type in \cite{BJ07}. The above Kazhdan-Lusztig polynomials were already studied in detail by Martin, \cite{Martin}, and Cox and de Visscher, \cite{CV}, in the context of Brauer algebras, and combinatorially by Brenti \cite{Bre09} using skew shifted s-Dyck partitions, see \cite{Lej10} for a concrete dictionary.
\end{rem}

\section{Homomorphism spaces of projectives in $\O^\p_0$}
Let  $\O^\p_0=\O^\p_0(\mathfrak{so}_{2n})$ be the principal block of the parabolic category $\O$ for $\mathfrak{so}_{2n}$ with respect to our parabolic $\p$, (see \cite{Hum08} for details). The simple objects are precisely the simple highest weight modules with highest weights $w\cdot0$, $w\in W^\p$. Our Kazhdan-Lusztig polynomials $n_{v,w}$ evaluated at $1$ count the multiplicity of simple highest weight modules occurring as subquotients in a composition series of a parabolic Verma module. This goes back to a conjecture by Kazhdan and Lusztig in \cite[Conj. 1.5]{KL79} which was later generalized to the parabolic case and proven there by Casian and Collingwood in \cite{CC87}. Together with the BGG-reciprocity formulas we obtain the dimension of the homomorphism spaces of projective objects, namely for $w, w'\in\W^\p$
\begin{equation}
\dim\operatorname{Hom}_{\O^\p_0}(P(w'),P(w))=\sum_{v\in\W^\p}n_{v,w'}(1)n_{v,w}(1)\label{eq:dim_hom_space}
\end{equation}
where the $P(x)$, $x\in\W^\p$ are a complete set of representatives for the isomorphism classes of indecomposable projective objects.\\
%
%
%
Using Theorem \ref{main_theorem} we see that $n_{v,w}(1)$ equals $1$ if $vC(w)$ is oriented and $0$ otherwise; hence the product $n_{v,w'}(1)n_{v,w}(1)$ is $1$ if the $v$ orients both cup diagrams, $C(w)$ and $C(w')$, and is $0$ if at least one of the cup diagrams is not oriented.
\subsection{Colored circle diagrams and hom spaces}
In this section we introduce colored circle diagrams which give a convenient way to compute the number of weights orienting two cup diagrams simultaneously.
\begin{defi}
The \textit{cap diagram} associated to $w\in\W^\p$ is defined to be the vertical reflection of $C(w)$ and denoted by $\overline{C(w)}$. Gluing a weight $v$ below the cap diagram gives us $\overline{C(w)}v$.
\end{defi}
\begin{rem}
All the notions we had for oriented cup diagrams can be applied to cap diagrams, too. $\overline{C(w)}v$ is oriented if and only if $vC(w)$ is oriented and the number of clockwise oriented caps coincides with the one of clockwise oriented cups. As pictured below, this is obvious:
\begin{eqnarray*}
\begin{pspicture}(0,-0.5)(1.5,0)
\psellipticarc(0.5,0)(0.5,0.5){180}{0}
\rput(0,0){\footnotesize\boldmath$\up$}
\rput(1,0.2){\footnotesize\boldmath$\down$}
\end{pspicture}
\leadsto
\begin{pspicture}(-0.5,-0.3)(4,0)
\psellipticarc(0.5,0)(0.5,0.5){0}{180}
\rput(0,-0.2){\footnotesize\boldmath$\up$}
\rput(1,0){\footnotesize\boldmath$\down$}
\end{pspicture}
&&
\begin{pspicture}[shift=-0.5](0,-1)(1.5,0)
\psellipticarc(0.5,0)(0.5,0.5){180}{0}
\rput(1,0){\footnotesize\boldmath$\up$}
\rput(0,0.2){\footnotesize\boldmath$\down$}
\end{pspicture}
\leadsto
\begin{pspicture}(-0.5,-0.3)(1,0)
\psellipticarc(0.5,0)(0.5,0.5){0}{180}
\rput(1,-0.2){\footnotesize\boldmath$\up$}
\rput(0,0){\footnotesize\boldmath$\down$}
\end{pspicture}
\end{eqnarray*}
\end{rem}
	\begin{defi}
	The \textit{circle diagram} associated to two cup diagrams $C(w)$ and $C(w')$ for $w,w'\in\W^\p$ is obtained by gluing the cap diagram $\overline{C(w')}$ on top of the cup diagram $C(w)$. This circle diagram is denoted $\overline{C(w')}C(w)$.\\
	Writing a weight $v$ between the cup and the cap diagram gives us the diagram $\overline{C(w')}vC(w)$. We call this an \textit{oriented circle diagram} if all circles are oriented.
	\end{defi}
Obviously, a circle diagram is oriented by a weight if and only if both cup diagrams are oriented simultaneously by this weight. All possible antisymmetric orientations of such a circle diagram with the correct frozen labels, give us in fact a weight in $S_{sym}(n)$ by Lemma \ref{lem:occuring_weights} below (so that we do not have to check if a constructed weight actually is a valid weight). Hence

\begin{prop}
\label{circlediags}
The vector space $E^\p=\operatorname{End}_{\O^\p_0}(\bigoplus_{w\in\W^\p} P(w))$ underlying the endomorphism algebra of a minimal projective generator of $\O^\p_0$ can be identified with the span of all oriented circle diagrams  $\overline{C(w)}vC(w')$ for $v,w,w'\in W^\p$.
\end{prop}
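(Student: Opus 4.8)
The plan is to reduce the statement to a dimension count and then match that count to oriented circle diagrams. First I would use that the underlying vector space of the endomorphism algebra of a minimal projective generator decomposes into the direct sum of all hom spaces between indecomposable projectives,
$$E^\p=\bigoplus_{w,w'\in\W^\p}\operatorname{Hom}_{\O^\p_0}(P(w'),P(w)),$$
so that it suffices to produce, for each fixed pair $(w,w')$, a basis of $\operatorname{Hom}_{\O^\p_0}(P(w'),P(w))$ indexed by the oriented circle diagrams of the form $\overline{C(w')}vC(w)$, $v\in\W^\p$, and then to assemble these over all pairs.

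For the dimension of each summand I would invoke the BGG-reciprocity formula \eqref{eq:dim_hom_space}, which expresses $\dim\operatorname{Hom}_{\O^\p_0}(P(w'),P(w))$ as $\sum_{v}n_{v,w'}(1)n_{v,w}(1)$. By Theorem \ref{main_theorem} the evaluation $n_{v,w}(1)$ is the indicator of the event ``$vC(w)$ is oriented'', so the product $n_{v,w'}(1)n_{v,w}(1)$ equals $1$ exactly when the single weight $v$ orients both cup diagrams $C(w)$ and $C(w')$ simultaneously, and $0$ otherwise. Hence $\dim\operatorname{Hom}_{\O^\p_0}(P(w'),P(w))$ counts precisely the weights $v$ that orient $C(w)$ and $C(w')$ at the same time.

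Next I would pass to circle diagrams using the preceding remark on cap diagrams: reflecting $C(w')$ to the cap diagram $\overline{C(w')}$ leaves orientability unchanged, so $v$ orients $C(w)$ and $C(w')$ simultaneously if and only if $v$ orients the circle diagram $\overline{C(w')}C(w)$, i.e. if and only if $\overline{C(w')}vC(w)$ is an oriented circle diagram. Thus the weights counted above are in bijection with the oriented circle diagrams having cup part $C(w)$ and cap part $\overline{C(w')}$. Summing over all pairs $(w,w')$ shows that $\dim E^\p$ equals the total number of oriented circle diagrams, and choosing for each summand the basis indexed by these diagrams yields the asserted identification of vector spaces.

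The step requiring the most care—and the place where Lemma \ref{lem:occuring_weights} is essential—is the claim that the combinatorial set of oriented circle diagrams is indexed by exactly the same data as the representation-theoretic sum \eqref{eq:dim_hom_space}. A priori one could orient a circle diagram by an arbitrary antisymmetric $\{\up,\down\}$-labelling compatible with the frozen labels without knowing that the induced labelling of the middle is a genuine weight in $S_{sym}(n)=\W^\p$; Lemma \ref{lem:occuring_weights} guarantees that every such admissible orientation arises from a unique $v\in\W^\p$, so that no spurious diagrams are introduced and none are missed. Once this is in place the count is exact and the identification follows. Note that only the vector-space structure is asserted here; the algebra structure on the span of circle diagrams is not part of the statement and is deferred to \cite{ES}.
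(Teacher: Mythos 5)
Your proposal is correct and takes essentially the same route as the paper: the paper likewise combines the decomposition of $E^\p$ into hom-spaces, the dimension formula \eqref{eq:dim_hom_space} evaluated at $q=1$ via Theorem~\ref{main_theorem}, the observation that a weight orients both $C(w)$ and $C(w')$ simultaneously if and only if the circle diagram $\overline{C(w')}vC(w)$ is oriented, and Lemma~\ref{lem:occuring_weights} to ensure every admissible orientation is a genuine weight in $S_{sym}(n)$. You have correctly isolated the role of Lemma~\ref{lem:occuring_weights} as the one nontrivial step, exactly as the paper does.
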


		\begin{lem}\label{lem:occuring_weights}
		All antisymmetric weights with $n$ {$\up$}'s from $n+1$ to $2n$ occurring as an orientation of some $C(w)$ are in $S_{sym}(n)$, i.e. have an even number of {$\up$}'s between $1$ and $n$.
\end{lem}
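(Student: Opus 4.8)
The plan is to count, for an arbitrary antisymmetric weight $v$ orienting $C(w)$, the number of $\up$'s among the positions $1,\dots,2n$ and to extract its parity. Since the $n$ positions $n+1,\dots,2n$ are frozen to $\up$, the number of $\up$'s in the whole right half equals $n$ plus the number of $\up$'s between $1$ and $n$; hence it suffices to show that the number of $\up$'s in $\{1,\dots,2n\}$ has the same parity as $n$. The starting observation is that the extended $\{+,-\}$-sequence is antisymmetric under the reflection $i\mapsto -i$ together with a sign flip (this is exactly the defining antisymmetry, extended compatibly by the frozen $n$ pluses on the left and $n$ minuses on the right). Because $M(\alpha_w)$ is the unique matching, it is invariant under this reflection, and therefore so is the cup diagram $C(w)$.

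First I would analyse the arcs of $M(\alpha_w)$ under this symmetry. An arc not crossing the middle has both endpoints on one side, so it is paired by the reflection with a distinct arc on the other side; thus all non-crossing arcs occur in reflection pairs $\{A,\bar A\}$. For an arc $\{l,r\}$ crossing the middle one has $l<0<r$, and I claim it must be self-symmetric, i.e.\ $r=-l$: otherwise the reflected arc $\{-r,-l\}$ is a second crossing arc and a short check on the cyclic order $l<-r<r<-l$ (or its mirror) shows the two arcs interleave, contradicting that $M(\alpha_w)$ is non-crossing. Hence every crossing arc has the form $\{-a,a\}$, and these arcs are mutually nested. Their number is $2n-2p$, where $p$ is the number of pluses between $1$ and $n$, so it is even and they are grouped by the linking procedure into $n-p$ linked pairs; exchanging the two inner boundary points turns a linked pair $\{-a,a\},\{-b,b\}$ into the reflection pair $\{-a,b\},\{-b,a\}$. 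In this way \emph{every} cup of $C(w)$ belongs to a reflection pair, and each such pair contributes exactly two endpoints lying in the right half $\{1,\dots,2n\}$; the $n$ reflection pairs thus account for all $2n$ right-half points.

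Next I would read off, pair by pair, the constraint that orientation by the antisymmetric $v$ imposes on these two right-half endpoints. For a non-crossing reflection pair the right-hand arc $\{r_1,r_2\}$ is a single cup, so it is bichromatic and its two right-half endpoints carry opposite labels, contributing exactly one $\up$. For a linked (crossing) pair, reconnected as $\{-a,b\},\{-b,a\}$ with right-half endpoints $a,b$, the cup condition $v(-a)\neq v(b)$ together with the antisymmetry $v(-a)=\overline{v(a)}$ forces $v(a)=v(b)$, so the pair contributes $0$ or $2$ to the $\up$-count, an even number. Since there are exactly $p$ non-crossing pairs and $n-p$ linked pairs, the number of $\up$'s in $\{1,\dots,2n\}$ is congruent to $p$ modulo $2$. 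Subtracting the $n$ frozen $\up$'s, the number of $\up$'s between $1$ and $n$ is $\equiv p-n \pmod 2$. Finally, as $w\in\W^\p=S_{sym}(n)$ the core right half of $\alpha_w$ has an even number of minuses, i.e.\ $n-p$ is even and $p\equiv n$; hence the number of $\up$'s between $1$ and $n$ is even and $v\in S_{sym}(n)$.

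I expect the main obstacle to be the arc-structure step: proving that every arc crossing the middle is self-symmetric and, above all, that the linking procedure reorganises these self-symmetric crossing arcs into genuine reflection pairs whose two right-half endpoints are then \emph{forced} to agree under an antisymmetric orientation. This is precisely the point where the decorations matter — without the linking the nested self-symmetric crossing arcs could a priori be oriented independently, which would allow an odd number of $\up$'s in the core right half; it is the reflection-pair reconnection (equivalently, the ``same direction'' property of linked cups noted in the preceding Remark) that rules this out. The bookkeeping that each reflection pair owns precisely two right-half endpoints, and the final parity count, are then routine.
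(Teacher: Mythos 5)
Your proof is correct, but it takes a genuinely different route from the paper's. The paper argues by a local case analysis: it first discards the positions whose orientation is forced to $\down$ (pluses matched into the frozen part) and the linked cups coming from the lower half, and then splits into two cases according to whether some cup ending in $\{1,\dots,n\}$ is linked to a cup ending beyond $n$; the delicate step there is the claim that the nesting permits at most one such cup $c$, whose orientation is then forced, contributing the single extra $\up$ that makes the total even. You instead establish a global structural fact first: since the extended sequence is antisymmetric and the non-crossing matching $M(\alpha_w)$ is unique, the matching is invariant under $i\mapsto -i$ with sign flip, so every arc crossing the middle is self-symmetric of the form $\{-a,a\}$, and the linking step (as in \eqref{a2}, \eqref{b1}) reorganizes these into genuine reflection pairs $\{-a,b\},\{-b,a\}$ --- hence \emph{all} $2n$ cups of $C(w)$ fall into exactly $n$ reflection pairs, each owning two right-half endpoints. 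The parity count then becomes uniform: each of the $p$ mirror pairs of non-crossing cups ($p$ the number of pluses in $\{1,\dots,n\}$) contributes exactly one $\up$ in $\{1,\dots,2n\}$, each of the $n-p$ linked pairs contributes $0$ or $2$ because orientedness plus antisymmetry forces $v(a)=v(b)$ (the same ``linked cups point the same way'' fact the paper records in its Remark), and the evenness of $n-p$ built into the definition of $\W^\p$ closes the argument. What your route buys is the elimination of the paper's case distinction, in particular the ``at most one cup $c$'' claim, which the paper justifies only tersely; what the paper's route buys is that it needs no uniqueness or reflection-invariance statement about $M(\alpha_w)$, reading everything off the diagrams directly. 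Your closing remark also correctly isolates the crux: an unreconnected self-symmetric arc $\{-a,a\}$ is automatically oriented by any antisymmetric weight, with $v(a)$ free, so it is precisely the reconnection into reflection pairs that pins down the parity.
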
		

\begin{proof}
		Consider the first part of the upper half of $w$, i.e. the points $1$ through $n$.
		All pluses not connected to a minus between $1$ and $n$ are connected to a minus between $n+1$ and $2n$. Assume a plus of the second kind occurs at place $i$. The orientation of a weight at a point between $n+1$ and $2n$ is {$\up$}. Consequently, to get an oriented cup diagram the weight has to have a {$\down$} at $i$. So these places do not contribute to the number of {$\up$}'s between $1$ and $n$ and can be ignored.\\
		The number of minuses occuring in the first part of the upper half of $w$ is even. Hence there is an even number of cups ending between $1$ and $n$. If two of these cups are linked because they both come from the lower half of $w$, then antisymmetry forces them to be oriented in the same direction. Hence they do not change the parity of the number of minuses and can be ignored, too. In addition ignoring them does not change the parity of the number of cups ending between $1$ and $n$.\\
		Now we have to consider two cases: Either all the other cups are connected within the first part of the upper half of $w$ or one cup is linked to one going to a place between $n+1$ and $2n$.\\
		In the first case to get an oriented cup diagram every cup gets exactly one {$\up$} and one {$\down$}. Since the number of cups ending between $1$ and $n$ is even, the number of cups connected within this interval has to be even. So the number of {$\up$}'s is even and the first case is done.\\
		For the second case we first make some observations. The way the linked cups are nested implies that at most one cup crossing the middle is linked to one going to a place between $n+1$ and $2n$. Suppose there is exactly one cup of this kind called $c$. Then all the other relevant cups are connected within the first part of the upper half of $w$. The number of these is odd since the only other relevant cup is $c$ and the total number of relevant cups is even.\\
		The cup $c$ and its linked cup look like this:
		$$\begin{pspicture}[shift=-1](-4,0)(4,-1.5)
		\psline[linecolor=gray, linewidth=0.5pt](0,0)(0,-1.7)
		\psline[linecolor=gray, linewidth=0.5pt](-3.5,0)(-3.5,-1.7)
		\psline[linecolor=gray, linewidth=0.5pt](3.5,0)(3.5,-1.7)
		\psellipticarc(1.5,0)(2.5,1.5){180}{0}
		\psellipticarc(-1.5,0)(2.5,1.5){180}{0}
		\rput(4,0){\footnotesize\boldmath$\up$}
		\rput(-4,0.2){\footnotesize\boldmath$\down$}
		\pscircle*(0,-1.2){2pt}
		\end{pspicture}$$
		Obviously, to get an oriented cup diagram, we have to have the orientation
		$$\begin{pspicture}[shift=-1](-4,0)(4,-1.5)
		\psline[linecolor=gray, linewidth=0.5pt](0,0)(0,-1.7)
		\psline[linecolor=gray, linewidth=0.5pt](-3.5,0)(-3.5,-1.7)
		\psline[linecolor=gray, linewidth=0.5pt](3.5,0)(3.5,-1.7)
		\psellipticarc(1.5,0)(2.5,1.5){180}{0}
		\psellipticarc(-1.5,0)(2.5,1.5){180}{0}
		\rput(4,0){\footnotesize\boldmath$\up$}
		\rput(-4,0.2){\footnotesize\boldmath$\down$}
		\rput(1,0){\footnotesize\boldmath$\up$}
		\rput(-1,0.2){\footnotesize\boldmath$\down$}
		\pscircle*(0,-1.2){2pt}
		\end{pspicture}$$
		So we get an {$\up$} for the cup $c$. The other cups are again oriented with exactly one plus and one minus. So the total number of {$\up$}'s is again even.
		\end{proof}

 A priori, every circle can be oriented in two different directions. The resulting weight is by Lemma~\ref{lem:occuring_weights} in $S_{sym}(n)$ if it is antisymmetric and has the correct frozen labels. These properties however restrict the possibilities for orienting circles. Some circles may only have one possible orientation and some circles even none. To distinguish these cases we color the circles.
	\begin{defi}
	We call the points bigger than $n$ \textit{upper outer points} and the points smaller than $-n$ \textit{lower outer points}. Then each circle in a circle diagram $\overline{C(x)}C(w)$ is colored according to the following rules:
	\begin{ite}
		\item {\bf Black}, if a circle does not pass through any outer points and the number of different linked pairs it contains is even.
		\item \textcolor{red}{\bf Red}, if a circle passes through more than one upper outer point, more than one lower outer point or the number of different linked pairs it contains is odd.
		\item \textcolor{green}{\bf Green}, if a circle is neither black nor red.
	\end{ite}
	Let $\operatorname{bk}(w,x)$ and $\operatorname{rd}(w,x)$ be the number of black and red circles respectively.
	\end{defi}
Obviously, the three coloring cases are disjoint and each circle gets colored. The following lemma clarifies further the conditions of the coloring rules.
	\begin{lem}\label{lem:crossings}
		\begin{enum}
		\item If a circle crosses the middle, it either always contains both cups or caps of a linked pair or it contains always only one of the linked cups and caps.
		\item If a circle always contains only one of the linked cups or caps, then the number of different linked pairs that are part of the circle is even.
		\item If a circle always contains both cups or caps, that are linked then the number of different linked pairs that are traversed is odd.
		\end{enum}
		\begin{proof}
		\begin{enum}
		\item  Assume a circle traverses only one of the cups or caps in a linked pair $A$  and contains both linked cups or caps in another linked pair $B$. This means that the traversed cup or cap in $A$ has to be connected to one cup or cap in $B$. Then, because of the symmetry of the diagram the other linked cup in $B$ has to be connected to the cup or cap in $A$ which is not traversed. But this means that the not traversed cup or cap in $A$ is connected to the same circle; a contradiction.
		\item For every circle, the number of times it crosses the middle is even. If it crosses the middle from left to right it has to cross it back to get a circle and vice versa. But this implies that if a circle contains only single cups or caps of linked pairs, the number of different pairs involved has to be even.		
		\item Consider two pairs of linked cups and caps.
		$$\begin{pspicture}(-2,-1.1)(2,2.3)
		\psline[linecolor=gray](0,-1.1)(0,2.6)
		\psellipticarc(0.5,0)(1,0.7){180}{0} \psellipticarc(-0.5,0)(1,0.7){180}{0} \pscircle*(0,-0.6){0.2}
		\psellipticarc(0.5,1.5)(1,0.7){0}{180} \psellipticarc(-0.5,1.5)(1,0.7){0}{180} \pscircle*(0,2.1){0.2}
		\rput(-1.5,0.75){\tiny$-i$}
		\rput(-0.5,0.75){\tiny$-j$}
		\rput(0.5,0.75){\tiny$j$}
		\rput(1.5,0.75){\tiny$i$}
		\end{pspicture}$$
		If we start connecting them without crossing the middle we could either connect $-j$ with $-j$ and because of the symmetry $j$ with $j$ or we could connect $-j$ of the bottom cup with $-i$ of the cap, and proceed analogously in the positive part.
		$$\begin{pspicture}(-2,-0.7)(4,2.3)
		\psline[linecolor=gray](0,-1.1)(0,2.6)
		\psellipticarc(0.5,0)(1,0.7){180}{0} \psellipticarc(-0.5,0)(1,0.7){180}{0} \pscircle*(0,-0.6){0.2}
		\psellipticarc(0.5,1.5)(1,0.7){0}{180} \psellipticarc(-0.5,1.5)(1,0.7){0}{180} \pscircle*(0,2.1){0.2}
		\psline(-0.5,0)(-0.5,1.5) \psline(0.5,0)(0.5,1.5)
		\rput(-1.5,0.75){\tiny$-i$}
		\rput(1.5,0.75){\tiny$i$}
		\end{pspicture}
		\begin{pspicture}(-2,-0.7)(2,2.3)
		\psline[linecolor=gray](0,-1.1)(0,2.6)
		\psellipticarc(0.5,0)(1,0.7){180}{0} \psellipticarc(-0.5,0)(1,0.7){180}{0} \pscircle*(0,-0.6){0.2}
		\psellipticarc(0.5,1.5)(1,0.7){0}{180} \psellipticarc(-0.5,1.5)(1,0.7){0}{180} \pscircle*(0,2.1){0.2}
		\psline(-0.5,0)(-1.5,1.5) \psline(0.5,0)(1.5,1.5)
		\rput(-1.5,0.3){\tiny$-i$}
		\rput(-0.5,1.2){\tiny$-j$}
		\rput(0.5,1.2){\tiny$j$}
		\rput(1.5,0.3){\tiny$i$}
		\end{pspicture}$$
		But concerning our circle and crossings of the middle the first picture would be the same as having $-i$ connected directly with $-i$ on the left half of the picture without crossing the middle. The second picture would be the same as having $-j$ from the top connected with $-i$ from the bottom omitting the loops. In both cases the parts in the positive half have to be connected symmetrically.
		$$\begin{pspicture}(-2,-0.3)(4,2.3)
		\psline[linecolor=gray](0,-0.5)(0,2.5)
		\psline(-0.5,0)(-0.5,2) \psline(0.5,0)(0.5,2)
		\rput(-1.5,1){\tiny$-i$}
		\rput(1.5,1){\tiny$i$}
		\psellipticarc(-1,0)(0.5,0.5){180}{0} \psellipticarc(-1,2)(0.5,0.5){0}{180}
		\psellipticarc(1,0)(0.5,0.5){180}{0} \psellipticarc(1,2)(0.5,0.5){0}{180}
		\end{pspicture}
		\begin{pspicture}(-2,-0.3)(4,2.3)
		\psline[linecolor=gray](0,-0.5)(0,2.5)
		\psline(-1.5,0)(-0.5,2) \psline(1.5,0)(0.5,2)
		\rput(-1.5,-0.3){\tiny$-i$}
		\rput(-0.5,2.3){\tiny$-j$}
		\rput(1.5,-0.3){\tiny$i$}
		\rput(0.5,2.3){\tiny$j$}
		\end{pspicture}$$
		Hence, in terms of circles and parity of crossings of the middle, an even number of crossings can be transformed into connections between two points on each side of the middle. The pictures also show that connecting the two points without crossing any other cup or cap is possible.
		Connecting one of these pictures into one circle gives us two lines which cross the middle. These had to be paired. Hence the total number of different pairs of linked cups and caps has to be odd if the circle passes through both cups or caps that are linked.
		\end{enum}
		\end{proof}
	\end{lem}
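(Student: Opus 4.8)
\emph{Plan.} The whole argument is organized around the reflection $\rho\colon i\mapsto -i$ that interchanges the two halves of the number line. Since each $\alpha_w$ is antisymmetric, the matching $M(\alpha_w)$, the cup diagram $C(w)$, every cap diagram, and hence every circle diagram $\overline{C(x)}C(w)$ is invariant under $\rho$. Moreover the linking procedure pairs exactly the $\rho$-symmetric middle-crossing arcs, so each linked pair has the form $\{c,\rho c\}$ with $c\ne\rho c$, its two members cross in a single point lying on the middle, and \emph{every} arc meeting the middle belongs to such a pair. Consequently $\rho$ permutes the circles, and each circle $\gamma$ is either \emph{$\rho$-stable} ($\rho\gamma=\gamma$) or is interchanged with a genuinely different circle $\rho\gamma$. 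I would record at the outset the two facts that drive everything: each arc belongs to a unique circle, and a circle meets the middle once for each middle-crossing arc it contains.

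For (a) I would argue by this dichotomy alone. Fix a circle $\gamma$ meeting the middle and a linked pair $\{c,\rho c\}$ with $c\subset\gamma$. If $\gamma$ is $\rho$-stable then $\rho c\subset\rho\gamma=\gamma$, so $\gamma$ contains \emph{both} members; if $\gamma$ is not $\rho$-stable then $\rho c\subset\rho\gamma$, and as $\rho c$ lies on a unique circle and $\rho\gamma\ne\gamma$ we get $\rho c\not\subset\gamma$, so $\gamma$ contains \emph{exactly one} member. Since this alternative is decided solely by whether $\gamma$ is $\rho$-stable, it is the same for every linked pair met by $\gamma$; that is precisely (a).

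Statement (b) is the non-stable case. Here the only crossings on $c$ are with its mirror $\rho c\subset\rho\gamma\ne\gamma$, so $\gamma$ has no self-crossings and is an embedded closed curve; it therefore meets the line through the middle an even number of times. Each such meeting is the unique middle-point of one crossing arc, and by (a) the crossing arcs of $\gamma$ are in bijection with the linked pairs it meets (one member apiece). Hence the number of linked pairs is even.

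Statement (c) is the real obstacle. For a $\rho$-stable $\gamma$ the dichotomy of (a) supplies both members of each of its $k$ linked pairs, so $\gamma$ meets the middle $2k$ times and the crude parity count only yields that $2k$ is even, which says nothing about the parity of $k$. What I would exploit instead is that $\rho$ restricts to $\gamma$ as a \emph{fixed-point-free} involution: a fixed point would lie on the middle, but the only points of $\gamma$ on the middle are the decorations, and at a decoration $\rho$ interchanges the two branches running through it; being fixed-point-free it is in particular orientation-preserving (a half-turn). Folding along the middle then presents $\gamma$ as the connected double cover of an embedded curve in the half-plane meeting the wall once per linked pair, and it exhibits the linked pairs carried by $\gamma$ as nested, since they arise from the nested middle-crossing arcs $(-r,r)$ of the underlying matchings. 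On this nested picture I would induct on $k$: excise two adjacent linked pairs, replacing the relevant crossing strands by the two non-crossing connections on either side of the middle, to obtain a single $\rho$-stable circle still meeting the middle and carrying $k-2$ linked pairs; the base case $k=1$ (two middle-crossing strands that must be paired) is immediate, forcing $k$ odd. The delicate point — and the step I expect to cost the most work — is verifying that each excision keeps $\gamma$ a \emph{single} circle rather than splitting it, since smoothing a self-crossing can go either way; controlling this is exactly where the nesting of the linked pairs and the forced antisymmetric orientations must be used.
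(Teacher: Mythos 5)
Your parts (a) and (b) are correct. For (a), your global dichotomy --- each circle is either $\rho$-stable or swapped with a genuinely different mirror circle, and stability alone decides whether both or exactly one member of \emph{every} linked pair is traversed --- is a cleaner formalization of the paper's argument, which derives a contradiction from mixing the two behaviours using the same reflection symmetry; your preliminary observations (the matching, hence the whole circle diagram, is $\rho$-invariant; each linked pair has the form $\{c,\rho c\}$ with the two members crossing once on the middle; every middle-crossing arc is linked) are all justified by the construction. For (b) your parity count of transversal wall crossings is exactly the paper's argument (the embeddedness remark is not even needed: the side-of-the-middle sign flips at each crossing and must return to itself after one full traversal).

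Part (c) is where your proposal has a genuine gap, and you flag it yourself: your induction needs that excising two adjacent linked pairs leaves a \emph{single} circle, and you do not prove this, correctly observing that smoothing a self-crossing at a decoration can either preserve or split a circle. As written, (c) is a plan, not a proof. Two remarks. First, the paper's reduction sidesteps the splitting issue by a different surgery: it distinguishes the two possible symmetric connection patterns between the two linked pairs and replaces the entire mirror-symmetric pair of sub-paths by direct one-sided connections \emph{with the same four loose ends paired in the same way}; since the endpoint pairing is unchanged, the number of circles is automatically unchanged, the number of linked pairs drops by two, and the induction closes at the base case $k=1$. Second, and more economically, the structure you set up already finishes (c) with no induction at all: since the induced involution $\sigma$ on the parametrizing circle of $\gamma$ is free (as you argue), it is conjugate to the antipodal map; the $2k$ wall-crossing parameters form $k$ $\sigma$-orbits, so any arc from $t$ to $\sigma(t)$ contains exactly $k$ of them; the side-of-the-middle sign $f$ flips at each crossing and satisfies $f(\sigma(t))=-f(t)$, whence $(-1)^k=-1$ and $k$ is odd. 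Equivalently, each wall-bounce of the folded curve exchanges the two sheets of the unfolding, and connectedness of $\gamma$ forces the total monodromy after $k$ bounces to be nontrivial. In particular neither the embeddedness of the folded curve nor the nesting of the linked pairs, both of which your plan leans on, is actually needed.
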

	\begin{cor}\label{cor:selfintersections}
	All circles with self-intersections are colored red.
	\begin{proof}
	Since intersections only occur in linked pairs that cross the middle, a self-intersection meant that we have a linked pair in which the circle contains both cups or caps. But then Lemma \ref{lem:crossings}.1 says that the circle always passes through both parts of the linked pairs it contains. The number of different linked pairs traversed by the circle is then odd by Lemma \ref{lem:crossings}. This means that the circle has to be colored red.
	\end{proof}
	\end{cor}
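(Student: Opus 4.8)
The plan is to reduce the claim to the parity dichotomy of Lemma~\ref{lem:crossings}, after first pinning down geometrically when a circle can cross itself. I would begin by observing that in any circle diagram $\overline{C(x)}C(w)$ the only crossings present are the decorations, and each such decoration sits at the unique transverse intersection of the two arcs forming a linked pair of cups (or of caps) across the middle; all undecorated arcs are pairwise disjoint. Consequently a circle acquires a self-intersection exactly when it runs along \emph{both} strands of some linked pair: following one of the two linked arcs it reaches the decoration, and if the same circle also contains the partner arc it meets that decoration a second time, producing the self-crossing. Conversely, if a circle uses only one strand at each linked pair, the partner strand through every decoration belongs to a different circle, so no self-intersection can occur.

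Granting this local identification, the remainder is purely formal. Suppose a given circle has a self-intersection; by the previous step it contains both cups or both caps of at least one linked pair. Lemma~\ref{lem:crossings}.1 then forces the alternative ``always both'', so the circle contains both strands of \emph{every} linked pair it meets. Lemma~\ref{lem:crossings}.3 applies precisely in this situation and yields that the number of distinct linked pairs traversed by the circle is odd.

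An odd number of linked pairs is one of the three defining conditions for the color red, so the circle is red by definition, which is what we wanted.

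I expect the only real obstacle to be the first step, namely justifying rigorously that a self-intersection occurs if and only if the circle contains both strands of a linked pair. This rests on the local model at a decoration: one must check that the boundary-point exchange described in the construction of $C(w)$ genuinely creates a single transverse crossing, and that the two strands through it may be assigned either to one circle (a self-crossing) or to two different circles (no self-crossing). Once this local picture is established, the passage to the global parity count is immediate from Lemma~\ref{lem:crossings}.
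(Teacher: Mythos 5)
Your proposal is correct and follows essentially the same route as the paper: you identify self-intersections with a circle containing both strands of a linked pair, then invoke Lemma~\ref{lem:crossings}.1 to upgrade this to \emph{every} linked pair it meets, Lemma~\ref{lem:crossings}.3 to get oddness, and the coloring definition to conclude red. Your more careful justification of the first step (the local model at a decoration), which the paper asserts in a single sentence, is a reasonable elaboration rather than a different argument.
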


		\begin{lem}
		Red circles can not be oriented. Black circles can be oriented in two directions. Green circles can be oriented in exactly one direction.
		\begin{proof}
		We prove this case by case.
		\begin{ite}
			\item {\bf Red}: If a circle passes through more than one upper outer point, this means two upper outer points have to be connected in some way. But the orientation at both points is {$\up$}. This prohibits orientation of the circle. A picture of this is:
						$$
						\begin{pspicture}(-2,-1.5)(2,2)
						 \psline[linecolor=gray,linestyle=dashed](0,-1.5)(0,2)
						\psline(1,0)(1,2) \psline(2,0)(2,2)
						\rput(1,1){\footnotesize\boldmath$\up$} \rput(2,1){\footnotesize\boldmath$\up$}
						 \pscurve(1,0)(0.5,-0.5)(-1,0.5)(-1.5,0)(-1,-0.5)(-2,-1)(-1,-1.5)(0.5,-1.2)(1.9,-0.3)(2,0)
						\end{pspicture}
						$$
						where the squiggly line may be any connection between the two upper outer points.\\
						The same argument works analogously for two lower outer points.\\
						We know that for any oriented circle the number of times it crosses the middle from left to right is the same as the number of crossings from right to left. If the number of different linked pairs is odd, then this implies that the circle always has to traverse both cups or caps that are linked. Otherwise the number of crossings is odd which contradicts that is an oriented circle. We also know that linked cups are always oriented in the same direction. But this implies that the numbers of crossings from left to right resp. from right to left can not be the same.
			\item {\bf Black}: By Corollary \ref{cor:selfintersections}, a black circle has no self-intersections. For all linked pairs it contains, it only passes through one part of the pair. Also, it does not pass through any outer points. These two things together imply, first of all, that the circle can be oriented and second, that the orientation can be chosen freely since no weight is fixed by any precondition.
			\item {\bf Green}: Green circles pass through at least one outer point. Otherwise, the number of different linked pairs the circle contains was either even which would lead to a black coloring or odd which would lead to a red coloring.\\
			The orientation of the weight at this outer point is fixed. Hence such a circle can be oriented in at most one direction. If it passes through no other outer point then the orientation of the weight at all other points can be chosen freely since the circle only passes through single cups or caps of linked pairs because of Lemma \ref{lem:crossings}. Hence it is possible to orient this circle in the given direction.\\
			If the circle passes through one outer point at each side, we have a picture like this
			$$\begin{pspicture}(-3,-0.5)(3,2.5)
			\psline[linecolor=gray,linestyle=dashed](-2,-0.5)(-2,2.5) \psline[linecolor=gray,linestyle=dashed](2,-0.5)(2,2.5)
			\psline[linecolor=gray](0,-0.5)(0,2.5)
			\psline(-3,0)(-3,2) \psline(3,0)(3,2)
			\rput(3,1){\footnotesize\boldmath$\up$} \rput(-3,1){\footnotesize\boldmath$\down$}	
			\rput(3,-0.3){\tiny$i$}
			\rput(-3,-0.3){\tiny$-j$}
			\end{pspicture}
			$$
			with $i\neq j$. If $i$ was equal to $j$, then, because of the symmetry of the diagram, for all cups and caps traversed by the circle the reflected counterpart has to be traversed also. But there has to be one cup crossing the middle. Its reflected counterpart is the other part of the linked pair. Hence we get a self-intersection which, according to Corollary ~\ref{cor:selfintersections}, leads to a red coloring and not to a green one.\\
			If we wanted to connect the upper end of the left line with the lower end of the right line, to get an unoriented circle, we had to connect the upper end of the left line with the lower end of the right line. But this leads to a self-intersection of the circle, which again led to a red coloring instead of a green one. So we can rule out this case, too, and orient our circle in the given way.
		\end{ite}\vskip -2em
		\end{proof}
		\end{lem}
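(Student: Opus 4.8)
The plan is to regard an orientation of a fixed circle as a proper two-colouring of that circle, viewed as an even closed path alternating between cups and caps, and then to track how the two standing requirements cut the number of admissible colourings down to $0$, $1$ or $2$: the \emph{frozen} labels ($\up$ at the upper outer points $n+1,\dots,2n$ and $\down$ at the lower outer points) and the antisymmetry $\ell(-i)=\overline{\ell(i)}$. The starting observation is that with \emph{no} constraints imposed every circle has exactly two orientations: since each cup and each cap must carry one $\up$ and one $\down$, a label chosen at a single point propagates uniquely around the even-length cycle, and the two propagations differ by the global swap $\up\leftrightarrow\down$. Everything then reduces to counting how many of these two labellings survive the frozen and antisymmetry conditions, which is exactly what the colouring rules are meant to record.

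I would first dispose of the antisymmetry constraint using the reflection $\sigma\colon i\mapsto -i$, under which the whole cup--cap diagram is symmetric. By Lemma~\ref{lem:crossings} a circle meets either both arcs of every linked pair it contains or only one of each; in the first case it and its image $\sigma(X)$ share an arc and hence coincide, so $X$ is $\sigma$-stable, while in the second case $\sigma$ carries it to a \emph{different} circle, and by the same lemma these alternatives mean an odd, respectively even, number of linked pairs. The point is that for a circle with an \emph{even} number of linked pairs the antisymmetry condition is vacuous: orient it freely, then orient its mirror image by the forced rule. This settles the two easy colours. A black circle has an even number of linked pairs and no outer point, so neither constraint is active and both labellings survive, giving two orientations. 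A green circle likewise has an even number of linked pairs, so antisymmetry is free, but it passes through at least one outer point; the frozen label there selects one labelling and kills the other. If it passed through a second outer point that point would be of the opposite type (green is not red, so at most one upper and one lower outer point occur), lying at a position forcing the \emph{compatible} label---the incompatible placement would produce a self-intersection and hence a red circle by Corollary~\ref{cor:selfintersections}. Thus a green circle has exactly one orientation.

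The heart of the argument is showing that a red circle admits no orientation, and this splits into the two defining situations. When the number of linked pairs is odd the circle is $\sigma$-stable, and I would argue by the signed count of crossings of the middle line: in any orientation the two cups (resp.\ caps) of a linked pair point the same way---this is the antisymmetry fact already noted for linked cups---so each linked pair contributes its two middle-crossings with the same sense. Hence the number of left-to-right crossings would be both even (a sum of contributions $0$ or $2$, one per linked pair) and equal to half of all crossings, i.e.\ to the number of linked pairs, which is odd; this contradiction rules out every orientation. When instead the circle meets two outer points of the \emph{same} type, both carry the same frozen label, and the claim becomes that these two points lie at \emph{odd} arc-distance along the cycle, so that the unique labelling agreeing with one of them disagrees at the other.

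The genuine obstacle is precisely this last parity statement, which I would reduce to one clean structural fact: \emph{every arc joins two points of opposite position-parity}. For non-crossing cups and caps this is immediate, since such an arc encloses an even number of points and therefore spans an odd distance; for the linked arcs it must be checked that the pairing procedure---replacing two arcs crossing the middle by two decorated arcs---preserves odd spans, and this is exactly where the even-minus condition underlying Lemma~\ref{lem:occuring_weights} enters. Granting the fact, position-parity alternates along the cycle, so the arc-distance between any two points is congruent modulo $2$ to the difference of their positions; the frozen labels of the outer points then turn directly into a parity obstruction, and the trichotomy red/green/black records whether this obstruction is present, absent with a forced orientation, or absent with a free one. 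I expect the bookkeeping for the linked arcs---establishing the opposite-parity fact through the pairing step and tying it to the even number of minuses---to be the only delicate part; the remainder is the two-colouring count organised by $\sigma$.
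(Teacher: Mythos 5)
Your black and green cases, and the odd-number-of-linked-pairs part of the red case, are sound and close in spirit to the paper: the signed count of middle crossings (each linked pair contributing two crossings of the same sense, forcing the left-to-right count to be even while it must equal the odd number of pairs) is exactly the paper's parity obstruction, just stated more carefully, and organising the antisymmetry constraint via the reflection $\sigma$ (black and green circles have an even number of linked pairs, hence are not $\sigma$-stable, so antisymmetry is absorbed by the distinct mirror circle) is a clean repackaging of the paper's remark that a black circle never coincides with its reflection. Your structural fact that every arc joins points of opposite position-parity is also true: for arcs not crossing the middle the points beneath are matched among themselves, and for a linked pair, the two original crossing arcs $(-a,a)$, $(-b,b)$ are consecutive in the nesting, so the points strictly between $a$ and $b$ are matched among themselves, giving $b-a$ odd (this is a nesting argument; Lemma~\ref{lem:occuring_weights} is not really what is needed there).

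The genuine gap is in your remaining red case, two outer points of the same type. Your conclusion ``the frozen labels of the outer points then turn directly into a parity obstruction'' only bites when the two outer positions $p,q$ have \emph{opposite} parity: your own machinery gives arc-distance $\equiv p-q \pmod 2$, so for $p\equiv q$ the two frozen labels $\up$ sit at even arc-distance and the alternating labelling is perfectly compatible with both. Such configurations really occur: for $n=4$ take cups $C(e)$ and caps $\overline{C(s_0s_2s_3)}$ (with $s_0s_2s_3=|{-}{+}{+}{-}]$ as in \eqref{halfseq}); the diagram $\overline{C(s_0s_2s_3)}\,C(e)$ contains the circle through the points $5,4,3,6,-1,-8,7,2$, which passes through the upper outer points $5$, $6$ \emph{and} $7$. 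The same-parity pair $(5,7)$ lies at arc-distance $6$ and produces no obstruction at all; non-orientability comes only from the opposite-parity pair $(5,6)$ at arc-distance $3$. So to close your argument you would still need the missing structural statement that any circle through at least two upper (resp.\ lower) outer points necessarily contains two of them of opposite position-parity --- for instance by showing the visited outer points behave like a contiguous block, as in the example above. Without that, a hypothetical circle visiting only, say, $\{5,7\}$ among the outer points would be coloured red but would pass your test as orientable, contradicting the lemma. The paper avoids this by arguing directly about how the two frozen-$\up$ strands are joined (and, in the green case, by the self-intersection argument via Corollary~\ref{cor:selfintersections}, which you correctly adopt); your formalisation through parity is more transparent but, as stated, provably insufficient at exactly this step.
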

	\begin{rem}
	It can be shown that if a circle contains a cup and its mirrored cup then the circle has to be colored red. (The antisymmetry of the $\{+,-\}$-sequences and the way the starting points of cups crossing the middle are exchanged ensure that the mirrored cup exists and is different from the original cup. If now a circle contains a cup and its mirrored cup than it has to cross the middle somewhere. So the first cup is connected to a cup crossing the middle. Because of the reflection symmetry the mirrored cup is connected to the cup linked to the one crossing the middle. But this is a self-intersection which by Corollary~\ref{cor:selfintersections} leads to a red coloring.) In particular, reflecting a black circle $C$ always yields a circle different from $C$, since the circle can not pass through any of its mirrored cups.
	\end{rem}
Although a black circle can be oriented in both directions this does not mean that we can orient all circles independently. Because of the antisymmetry of the weights any black oriented circle determines the orientation of its reflected counterpart. So only half the black circles can be oriented without any limitations.\\
We now express \eqref{eq:dim_hom_space} in terms of counting colored circles (setting $0^0+1$):
\begin{thm}\label{thm:dim_hom}
The dimension of $\op{Hom}_{\O^\p_0}(P(w),P(w'))$ is $2^\frac{\operatorname{bk}(w,w')}{2}\cdot 0^{\operatorname{rd}(w,w')}$.
\end{thm}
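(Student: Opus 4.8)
The plan is to feed the graphical description of Theorem~\ref{main_theorem} into the BGG-reciprocity identity \eqref{eq:dim_hom_space} and so reduce the dimension to a count of orientations of a single circle diagram, which is then evaluated color by color. First I would rewrite $n_{v,w}(1)$ as the indicator that $vC(w)$ is oriented (Theorem~\ref{main_theorem}); substituting into \eqref{eq:dim_hom_space} shows that $\dim\operatorname{Hom}_{\O^\p_0}(P(w),P(w'))$ counts exactly those $v\in\W^\p$ that orient $C(w)$ and $C(w')$ simultaneously, equivalently those $v$ for which the circle diagram $\overline{C(w')}vC(w)$ is oriented. By Lemma~\ref{lem:occuring_weights} every antisymmetric labelling carrying the prescribed frozen orientations is automatically a weight in $S_{sym}(n)$, so the dimension equals the number of admissible (antisymmetric, correctly frozen) orientations of $\overline{C(w')}C(w)$, and the task becomes a circle-by-circle count.

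Since the three colors are mutually exclusive and exhaustive, an admissible orientation is a choice of orientation on each circle, and I would organize the count by color using the lemma on orientability of colored circles. A red circle admits no orientation, so the presence of a single red circle empties the set of admissible orientations; this produces the factor $0^{\operatorname{rd}(w,w')}$, read with the convention $0^{0}=1$ when $\operatorname{rd}(w,w')=0$. A green circle admits exactly one orientation and hence contributes the neutral factor $1$.

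The delicate step, and the one I expect to be the main obstacle, is the bookkeeping for black circles under the antisymmetry constraint: an admissible orientation is an antisymmetric weight, not a free choice of direction per abstract circle. The reflection $i\mapsto -i$ permutes the circles of $\overline{C(w')}C(w)$ and sends black circles to black circles, since it interchanges upper and lower outer points and preserves the parity of the number of linked pairs a circle contains; by the Remark a black circle is never its own mirror, so the $\operatorname{bk}(w,w')$ black circles split into genuine reflection pairs and in particular $\operatorname{bk}(w,w')$ is even. Fixing the orientation of one member of a pair forces that of its mirror by antisymmetry, so each of the $\operatorname{bk}(w,w')/2$ pairs contributes a factor $2$ rather than $4$; one checks separately that the forced orientations at outer points impose no extra constraint, since the frozen labels are themselves antisymmetric. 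Multiplying the three contributions gives $2^{\operatorname{bk}(w,w')/2}\cdot 1\cdot 0^{\operatorname{rd}(w,w')}$, the asserted value.
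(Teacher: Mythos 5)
Your proposal is correct and follows essentially the same route as the paper: reduce via Theorem~\ref{main_theorem} and \eqref{eq:dim_hom_space} to counting weights orienting the circle diagram $\overline{C(w')}vC(w)$, then evaluate circle by circle using the orientability lemma (red $\to 0$, green $\to 1$) together with Lemma~\ref{lem:occuring_weights} for admissibility. Your explicit pairing of black circles under the reflection $i\mapsto -i$ (using the Remark that a black circle is never its own mirror) is just a spelled-out version of the paper's observation that antisymmetry lets only half of the black circles be oriented freely, yielding the factor $2^{\operatorname{bk}(w,w')/2}$.
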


\begin{proof}
We know that the dimension of $\op{Hom}_{\O^\p_0}(P(w'),P(w))$ is the number of weights $v$ such that $\overline{C(w')}vC(w)$ is oriented, i.e. all circles are oriented. If a red circle appears this means by the previous lemma that this circle can not be oriented by any weight. Hence the dimension has to be $0$.\\
If no circle is colored red this means the diagram can be oriented since green and black circles can be oriented. The only open question is how many weights orient the diagram. By the previous discussion half of the black circles can be oriented freely in both directions while green circles can only be oriented in one direction. This gives us $2^{\frac{\operatorname{bk}(w,x)}{2}}$ possible weights.\\
Orienting only half of the black circles takes care of the antisymmetry of the weight and orienting the green circles only counterclockwise ensures the orientation of the weight at the points bigger than $n$ and smaller than $-n$. Thus by Lemma \ref{lem:occuring_weights} each orientation gives us a weight in $S_{sym}(n)$.
\end{proof}

\begin{rem}
Of course green circles contribute a trivial factor $1^{\operatorname{gr}(w,x)}$, where $\operatorname{gr}(w,x)$ is the number of green circles.
\end{rem}
The formula of Theorem \ref{thm:dim_hom} resembles \cite[5.4]{Str09}. There, a `colored' version of a $2$-dim TQFT was introduced, and further developed in \cite[Section 10]{Gisa},  to describe the multiplicative structure of the endomorphism algebra. These results easily carry over to our case, see \cite{ES} for the explicit algebra structure.

%
%
%
\section{Decorated tangles $\H$ on $\Nmod$}
%
%
%
Our goal is a diagrammatical description of the $\H$-action on $\Nmod$ in terms of decorated tangles arising from generalized Temperley-Lieb algebra of type $D_n$.
\subsection{Temperley-Lieb algebras and decorated tangles}\label{gen_TL}
We start with some definitions on Temperley-Lieb algebras following \cite{Gre98}.
\begin{defi}
The {\it generalized Temperley-Lieb algebra} $\operatorname{TL}(\W)$ is the quotient of the Hecke algebra $\H(\W)$ by the ideal $I(\W)$ generated by elements of the form $\sum_{w\in\left<s_i,s_j\right>} q^{-l(w)}H_w$, where $i$ and $j$ are connected in the Dynkin diagram $\Gamma$.
\end{defi}
%
%
%
	\begin{defi}
	Let $m,n\in\N_0$ such that $m+n$ is even. Consider $m+n$ points contained in a rectangle such that $m$ points are on the bottom face of the rectangle and $n$ points on the top. An \textit{$(m,n)$-tangle} is a collection of lines and circles contained in the interior of the rectangle, such that the lines connect all the points and no intersections occur. Two such tangles are called the same if they are isotopy equivalent.\\
	Lines connecting two points at the top face are called \textit{cups} and lines connecting two points at the bottom face are called \textit{caps}. Lines connecting a point at the top with one at the bottom are called \textit{edges}. A \textit{decorated tangle} is a tangle with some `$\bullet$'s (called {\it dots}) on the lines and/or circles such that every dot is accessible from the left side of the rectangle, meaning there exists a line in the rectangle connecting the left face with {'$\bullet$'} not intersecting the tangle, see \eqref{eq:es} for examples.
	\end{defi}
\begin{rem}
We use the term `tangle' for tangles without crossings. Usually tangles are allowed to have crossings.
\end{rem}
We can concatenate an $(m,n)$-tangle with an $(n,r)$-tangle vertically which will result in an $(m,r)$-tangle. We do this by writing the $(n,r)$-tangle on top of the $(m,n)$-tangle. The $n$ points at the bottom of the $(n,r)$-tangle are then connected one to one from left to right to the $n$ points at the top of the $(m,n)$-tangle.
\begin{rem}
	Often, the decorated tangles are introduced as morphisms of a category. The objects are the natural numbers. Morphisms from $m$ to $n$ are just $(m,n)$-tangles and composition of morphisms is given by the vertical concatenation of the tangles.
\end{rem}
 For any commutative ring $R$ let $\mathbb{T}_n$ be the free $R$-algebra with basis consisting of all $(n,n)$-tangles and multiplication given by the concatenation of tangles.\\
For $1\leq i\leq n-1$ define the tangle $e_i$ to be the $(n,n)$-tangle connecting the points $i$ and $i+1$ at the north resp. south face and the point $k\not=i,i+1$ on the north face with the point $k$ at the south face. Define the tangle $e_0$ to be the tangle $e_1$ with two decorations, one on the cup connecting point $1$ with point $2$ at the top face and one on the cap connecting $1$ and $2$ at the bottom face:
\begin{equation}
\label{eq:es}
\begin{tikzpicture}[thick]
\draw (0,-.5) node {$e_2=$};
\draw (.5,0) to +(0,-1);
\draw (1,0) .. controls +(0,-.5) and +(0,-.5) .. +(.5,0);
\draw (1,-1) .. controls +(0,.5) and +(0,.5) .. +(.5,0);
\draw (2,0) to +(0,-1);
\draw (2.5,0) to +(0,-1);
\draw (3,0) to +(0,-1);

\begin{scope}[xshift=5cm]
\draw (0,-.5) node {$e_0=$};
\draw (.5,0) .. controls +(0,-.5) and +(0,-.5) .. +(.5,0);
\fill (.75,-.35) circle(2.5pt);
\draw (.5,-1) .. controls +(0,.5) and +(0,.5) .. +(.5,0);
\fill (.75,-.65) circle(2.5pt);
\draw (1.5,0) to +(0,-1);
\draw (2,0) to +(0,-1);
\draw (2.5,0) to +(0,-1);
\draw (3,0) to +(0,-1);
\end{scope}
\end{tikzpicture}
\end{equation}
Let  $\mathbb{DT}_n$ be the subalgebra of $\mathbb{T}_n$ generated by $e_0,\hdots,e_{n-1}$.

\begin{prop}[\rm{\cite{Gre98}}]
\label{prop:Green}
If $R=\L$ then $C_{s_i}\mapsto e_i$ defines an isomorphism of $R$-algebras from $\operatorname{TL}(\W)$ to $\mathbb{DT}_n$ modulo the local relations
\begin{equation}
\label{fig:TL-Rel}
\begin{tikzpicture}[thick,scale=1]
\draw circle(10pt) +(1.6,0) node{$=q+q^{-1}$};
\draw (3.3,0) +(.7,-.35) to +(.7,.45);
\fill (4,-.05) circle(2.5pt) +(0,.2) circle(2.5pt);
\draw (4.4,0) node {$=$} +(.3,-.35) to +(.3,.45);
\draw (5.4,0) +(1.05,0) circle(10pt) +(1.8,0);
\fill (6.1,0) circle(2.5pt);
\fill (7,0) circle(2.5pt);
\draw (6.3,0) +(.7,-.35) to +(.7,.45);
\draw (7.4,0) node{$=$};
\draw (7,0) +(1.05,0) circle(10pt) +(1.8,0);
\fill (7.7,0) circle(2.5pt);
\draw (7.9,0) +(.7,-.35) to +(.7,.45);
\end{tikzpicture}
\end{equation}

\end{prop}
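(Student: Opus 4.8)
The plan is to check that $C_{s_i}\mapsto e_i$ respects a presentation of $\op{TL}(\W)$ and then to force bijectivity by matching two spanning sets indexed by the \emph{fully commutative} elements of $\W$. Recall that $\op{TL}(\W)$, being the quotient of $\H$ by the ideal $I(\W)$, is free over $\L$ with basis $\{C_w\mid w\in\W_c\}$, where $\W_c\subseteq\W$ is the set of fully commutative elements (those whose reduced words avoid every substring $s_is_js_i$ with $i,j$ adjacent in $\Gamma$) and $C_w:=C_{s_{i_1}}\cdots C_{s_{i_k}}$ for any reduced word $s_{i_1}\cdots s_{i_k}$ of $w$. This product is independent of the chosen reduced word precisely because, for fully commutative $w$, any two reduced words are related by commutation moves alone, and $C_sC_t=C_tC_s$ whenever $s,t$ are non-adjacent. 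Thus both the source and the proposed target carry a natural $\W_c$-indexed spanning set, and the whole argument amounts to identifying them.

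First I would verify that the assignment is a well-defined $\L$-algebra homomorphism from $\op{TL}(\W)$ into $\mathbb{DT}_n$ modulo the relations \eqref{fig:TL-Rel}. Since $\op{TL}(\W)$ is presented (in the simply-laced type $D_n$) by generators subject to $e_i^2=(q+q^{-1})e_i$, to $e_ie_j=e_je_i$ for non-adjacent $i,j$, and to $e_ie_je_i=e_i$ for adjacent $i,j$, it suffices to check these identities among the tangles $e_0,\dots,e_{n-1}$ after imposing the local relations \eqref{fig:TL-Rel}. Concatenating $e_i$ with itself for $i\geq1$ creates one closed loop, which by the first relation of \eqref{fig:TL-Rel} evaluates to $q+q^{-1}$ and leaves $e_i$; for $i=0$ the same concatenation yields a loop carrying two dots, whence the double-dot and dotted-loop relations again return $(q+q^{-1})e_0$. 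The commutations are immediate because the relevant strands are disjoint, and the relations $e_ie_je_i=e_i$ are the usual planar ``pull a cup through a cap'' isotopies; the one delicate case is $e_0$ versus $e_1$, both supported on the points $1,2$, where the dots must be tracked and the double-dot relation invoked to see that they commute. The accessibility-from-the-left condition on the dots is exactly what makes these manipulations unambiguous.

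Surjectivity is immediate, since by definition $\mathbb{DT}_n$ is generated by $e_0,\dots,e_{n-1}$. For injectivity I would argue by normal forms. Reading the local relations \eqref{fig:TL-Rel} as rewriting rules, any monomial $e_{i_1}\cdots e_{i_k}$ can be reduced---by deleting closed loops (each contributing a factor $q+q^{-1}$), cancelling double dots, and absorbing dots on loops---to a scalar in $\L$ times a single \emph{reduced} decorated tangle, namely one with no closed components and at most one (left-accessible) dot per arc. Because $\mathbb{T}_n$ is by definition \emph{free} on the set of all $(n,n)$-tangles, distinct reduced decorated tangles are automatically $\L$-linearly independent, so the reduced tangles arising this way form a basis of the image of $\mathbb{DT}_n$ in the quotient. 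It then remains to see that the reduced tangle $e_w$ attached to $w\in\W_c$ is well-defined (independent of the reduced word, for the same reason as on the algebraic side, since commuting generators correspond to disjoint strands), that $w\mapsto e_w$ is injective, and that every occurring reduced decorated tangle is some $e_w$. Matching these two $\W_c$-indexed bases then forces $C_w\mapsto\lambda_w e_w$ with $\lambda_w\in\L$ a unit, giving the isomorphism.

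The crux, and the expected main obstacle, is this last step. One must prove that the diagrammatic reduction is \emph{confluent}, so that the normal form of a product is genuinely canonical and the reduced tangles really do span freely, and one must establish an honest bijection between $\W_c$ and the set of reduced decorated $(n,n)$-tangles, so that the two dimensions agree. The combinatorics of the decorations---in particular their left-accessibility and the parity of the dots on each component---is precisely the bookkeeping encoding the folding of type $D_n$ inside type $A$, and arranging this data to match the fully commutative elements exactly is the heart of the argument, carried out in \cite{Gre98}.
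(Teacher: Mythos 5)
Your first half is fine: checking that $e_0,\dots,e_{n-1}$ satisfy $e_i^2=(q+q^{-1})e_i$, the commutations (including the dotted pair $e_0,e_1$, where the once-dotted circle absorbs the stray dot via the third relation of \eqref{fig:TL-Rel}), and $e_ie_je_i=e_i$ for adjacent $i,j$ (two dots landing on one strand and cancelling) is exactly the standard verification, and since the paper gives no proof of Proposition \ref{prop:Green} --- it is quoted from \cite{Gre98} --- there is nothing to compare there. The genuine gap is in your injectivity step: your claimed normal form, ``a scalar in $\L$ times a reduced decorated tangle with \emph{no closed components},'' is unobtainable. The relations \eqref{fig:TL-Rel} evaluate only \emph{undotted} circles to $q+q^{-1}$; a circle carrying exactly one dot is assigned no scalar value at all --- it merely absorbs decorations elsewhere --- and it survives as an honest basis element of $\operatorname{TL}(\W)$. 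This is not a technicality: Green's Theorem 4.2, quoted verbatim later in this paper, describes the basis $\mathcal{B}$ as consisting of two types, and the first type \emph{contains one dotted loop}. Concretely, $e_0e_1$ equals the disjoint union of a once-dotted loop with the undotted cup--cap diagram, and it represents $C_{s_0}C_{s_1}=C_{s_0s_1}$ (with $s_0s_1$ fully commutative), which is linearly independent of $C_{s_1}$ in $\operatorname{TL}(\W)$. Your rewriting procedure would delete that loop and identify $e_0e_1$ with a multiple of $e_1$, so the ``isomorphism'' you construct would actually map onto a proper quotient, and your proposed bijection between $\W_c$ and \emph{loop-free} reduced tangles fails already numerically (the loop-free diagrams are too few; the once-dotted-loop diagrams make up the difference).

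The likely source of the error is a conflation of \eqref{fig:TL-Rel} with the later action relations \eqref{actionrel}: relation $(iii)$ there does kill a once-dotted circle, but that relation holds only in the quotient $\operatorname{\widehat{TL}}(\W)$ through which the action on $\Nmod_t$ factors, not in $\operatorname{TL}(\W)\cong\mathbb{DT}_n/\eqref{fig:TL-Rel}$ itself --- indeed the paper explicitly notes that the type-one basis elements become $0$ only after passing to $\operatorname{\widehat{TL}}(\W)$. To repair your argument, the correct normal forms are: a scalar times either a loop-free even-decorated tangle with at most one left-accessible dot per arc, or such a tangle together with exactly one once-dotted loop and no other decorations (two dotted loops reduce, via the third relation and then the first, to $(q+q^{-1})$ times one dotted loop, so at most one survives). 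The bijection with $\W_c$ must then account for \emph{both} families, which is precisely the content of Green's proof. One further caution: linear independence of the surviving diagrams in the quotient does not follow ``automatically'' from freeness of $\mathbb{T}_n$, since the quotient could identify distinct reduced diagrams; establishing that it does not is the confluence statement you correctly flag as the crux, but with the wrong normal-form set even a correct confluence proof would not yield the proposition.
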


\begin{rem} The lines involved in the relations \eqref{fig:TL-Rel} are not necessarily edges in the tangle but might also be part of a circle. The first relation means we can remove an undotted circle and instead multiply with $(q+q^{-1})$. The second relation allows us to delete an even number of decoration from any line or circle, and then the last relations tells us that in presence of a circle with just one decoration, all other decorations in the tangle may be deleted.
\end{rem}
%
%
%
\subsection{From cup diagrams to decorated tangles}
All the information necessary to build a cup diagram is located at the points $1$ to $n$. So we `cut out' this portion of the diagram. This cutting (or maybe better `folding') operation will give us a decorated tangle. More precisely,
take a cup diagram, draw vertical lines between $-1$ and $1$ and between $n$ and $n+1$ and `cut' the cups along the two lines with these rules:
\begin{enum}
\item Cups connecting two points between $1$ and $n$ stay the way they are. \label{rel:full_cup}
\item Cups from a point between $1$ and $n$ to one between $n+1$ and $2n$ turn into an edge connecting the point between $1$ and $n$ to the bottom face. \label{rel:single_line}
\item Two linked cups both ending at points between $1$ and $n$ are replaced by a dotted cup, i.e. a cup with a  `$\bullet$', connecting the two endpoints. \label{rel:dec_cup}
\item Two linked cups of which only one ends at a point between $1$ and $n$ are replaced by a dotted edge from the point between $1$ and $n$ to the `bottom face'. \label{rel:dec_line}
\item Two linked cups ending both at a number bigger than $n$ are removed. \label{rel:rem}
\end{enum}
It is easy to see that to each cup connected to a point between $1$ and $n$ there is exactly one rule that applies. Denote the tangle associated to $C(w)$ by $\underline{w}$. In fact, $\underline{w}$ can be directly constructed from $w$ by first connecting all neighbored $+-$ pairs successively by a cup ignoring already joint symbols. Then connect from the remaining $-$'s pairwise, starting from the left, by a dotted cup. Finally attach vertical edge to the remaining symbols with an additional dot in case the label is $-$. Note there is at most one dotted edge.

\begin{example}
\label{dottedcups}
Cutting the elements from $C(\W^\p)$ with $W^\p$ as in \eqref{halfseq} yields to
\begin{eqnarray*}
\begin{tikzpicture}[thick,scale=0.9]
\draw (.5,0) to +(0,-.5);
\draw (1,0) to +(0,-.5);
\draw (1.5,0) to +(0,-.5);
\draw (2,0) to +(0,-.5);
\begin{scope}[xshift=3cm]
\draw (0.5,0) .. controls +(0,-.5) and +(0,-.5) .. +(.5,0);
\fill (.75,-.35) circle(2.5pt);
\draw (1.5,0) to +(0,-.5);
\draw (2,0) to +(0,-.5);
\end{scope}
\begin{scope}[xshift=6cm]
\draw (.5,0) to +(0,-.5);
\fill (.5,-.25) circle(2.5pt);
\draw (1,0) .. controls +(0,-.5) and +(0,-.5) .. +(.5,0);
\draw (2,0) to +(0,-.5);
\end{scope}
\begin{scope}[xshift=9cm]
\draw (.5,0) to +(0,-.5);
\draw (1,0) to +(0,-.5);
\fill (.5,-.25) circle(2.5pt);
\draw (1.5,0) .. controls +(0,-.5) and +(0,-.5) .. +(.5,0);
\end{scope}
\begin{scope}[yshift=-1cm]
\draw (.5,0) .. controls +(0,-.5) and +(0,-.5) .. +(.5,0);
\draw (1.5,0) to +(0,-.5);
\fill (1.5,-.25) circle(2.5pt);
\draw (2,0) to +(0,-.5);
\end{scope}
\begin{scope}[xshift=3cm,yshift=-1cm]
\draw (.5,0) .. controls +(0,-.5) and +(0,-.5) .. +(.5,0);
\draw (1.5,0) .. controls +(0,-.5) and +(0,-.5) .. +(.5,0);
\end{scope}
\begin{scope}[xshift=6cm,yshift=-1cm]
\draw (.5,0) .. controls +(0,-1) and +(0,-1) .. +(1.5,0);
\draw (1,0) .. controls +(0,-.5) and +(0,-.5) .. +(.5,0);
\end{scope}
\begin{scope}[xshift=9cm,yshift=-1cm]
\draw (.5,0) .. controls +(0,-.5) and +(0,-.5) .. +(.5,0);
\fill (.75,-.35) circle(2.5pt);
\draw (1.5,0) .. controls +(0,-.5) and +(0,-.5) .. +(.5,0);
\fill (1.75,-.35) circle(2.5pt);
\end{scope}
\end{tikzpicture}
\end{eqnarray*}
\end{example}
\begin{defi}
Let $\operatorname{dC}(n)$ to be the set of decorated $(n,k)$-tangles with $1\leq k\leq n$ where all points at the bottom
face are connected to one at the top face and the number of dotted edges plus the number of undotted cups is even.
Elements from $\operatorname{dC}(n)$ are called {\it (even) decorated cup diagrams}.
\end{defi}

The following follows easily from the definitions:
\begin{lem}
\label{cutlemma}
Cutting defines a bijection between  $C(\W^\p)$ and $\operatorname{dC}(n)$.
\end{lem}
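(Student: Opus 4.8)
The plan is to establish that the cutting operation described just above the lemma is a well-defined map $C(\W^\p)\to\operatorname{dC}(n)$ and then to produce an explicit two-sided inverse. First I would verify that cutting actually lands in $\operatorname{dC}(n)$: by Rule~\ref{rel:single_line} and Rule~\ref{rel:dec_line}, each point between $1$ and $n$ that connects to a point beyond $n$ becomes an edge (dotted or undotted) to the bottom face, so every bottom point is indeed connected to a top point; and by Rule~\ref{rel:rem} linked cups lying entirely beyond $n$ simply disappear, contributing nothing. The parity condition is the only substantive point here, and I would deduce it from the lemma proving $C(w)$ is well-defined: the number of arcs crossing the middle in $M(\alpha_w)$ is even (it equals $2n-2\sum_{\alpha_i=+,\,i>0}1$), and these crossing arcs are precisely the ones producing either an undotted edge, a dotted edge, or (after linking) contributing to the dotted cups. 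Tracking how the linking pairs up the crossing arcs shows that the number of dotted edges plus the number of undotted cups coming from crossing arcs has the required even parity.

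Next I would construct the inverse explicitly, using the alternative \emph{direct} description of $\underline{w}$ given in the paragraph preceding the lemma (connect neighbouring $+-$ pairs by cups, pair remaining $-$'s by dotted cups from the left, attach vertical edges to leftover symbols with a dot exactly when the symbol is $-$). Given any $T\in\operatorname{dC}(n)$, I would reconstruct a $\{+,-\}$-sequence $|\alpha_1,\dots,\alpha_n]$ by reading off the top face: an undotted cup forces its left endpoint to be $+$ and right endpoint to be $-$; a dotted cup forces both endpoints to be $-$; an undotted edge forces $+$ and a dotted edge forces $-$. This recovers $\alpha_w$ from $\underline{w}$, and since $\alpha\mapsto M(\alpha)\mapsto C(w)$ and the direct construction of $\underline{w}$ from $w$ are mutually reverse step by step, the two assignments are inverse bijections. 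I would then invoke the bijection $\W^\p\to C(\W^\p)$ already established to conclude.

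The bulk of the verification is routine bookkeeping, so I would keep it brief and lean on the two preceding lemmas. The one step requiring genuine care — and the main potential obstacle — is confirming that the reconstructed sequence always has an \emph{even} number of minuses in the second half (so that it corresponds to a genuine element of $\W^\p\subset S_{sym}(n)$ rather than to the full signed-permutation group), and dually that the evenness condition defining $\operatorname{dC}(n)$ matches this exactly. Concretely, I expect the identity ``$\#\{\text{dotted edges}\}+\#\{\text{undotted cups}\}$ is even'' to translate, under the reconstruction, precisely into ``the number of minuses among $\alpha_1,\dots,\alpha_n$ is even.'' Making this parity correspondence airtight — rather than merely plausible from the worked Example~\ref{dottedcups} — is where I would spend most of the effort, checking that dotted cups (which carry two minuses) and undotted edges (which carry a plus) contribute trivially to the relevant parity while exactly the dotted edges and undotted cups toggle it.
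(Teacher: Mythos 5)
The paper itself gives no argument for Lemma~\ref{cutlemma} --- it is introduced with ``the following follows easily from the definitions'' --- so your proposal is essentially a fleshed-out version of the intended routine verification, and its core is correct. In particular your reconstruction rules (undotted cup $\mapsto$ $+$ then $-$, dotted cup $\mapsto$ two $-$'s, undotted edge $\mapsto$ $+$, dotted edge $\mapsto$ $-$) invert the direct description of $\underline{w}$ stated just before the lemma, and the count in your final paragraph is exactly the right parity argument: writing $c$, $d$, $e_-$ for the numbers of undotted cups, dotted cups and dotted edges, the number $m$ of minuses in $|\alpha_1,\dots,\alpha_n]$ satisfies $m=c+2d+e_-$, so $m$ is even if and only if $c+e_-$ is even, which is precisely the evenness condition defining $\operatorname{dC}(n)$ and precisely the condition for the sequence to come from $\W^\p$.

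One claim in your first paragraph is false, however, and should be deleted rather than made ``airtight'': the parity condition on $\operatorname{dC}(n)$ does \emph{not} follow from the evenness of the number of middle-crossing arcs in $M(\alpha_w)$. Undotted cups and undotted edges arise from arcs that do not cross the middle (Rules~\ref{rel:full_cup} and~\ref{rel:single_line}; an arc from a point in $\{1,\dots,n\}$ to one in $\{n+1,\dots,2n\}$ stays in the positive half), while the crossing arcs, after linking, produce only the dotted cups, the dotted edge, or nothing (Rules~\ref{rel:dec_cup}--\ref{rel:rem}). Moreover the crossing arcs always come in linked pairs, so their number is even for \emph{every} symmetric matching; that lemma therefore carries no information about $c+e_-$, and your crossing-arc derivation cannot be repaired --- the identity $m=c+2d+e_-$ is the argument. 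Two further small points worth making explicit in a written-up version: injectivity is immediate from your read-off map (it is a left inverse), and for surjectivity one should observe that planarity together with the left-accessibility condition on dots forces the dotted cups of an arbitrary element of $\operatorname{dC}(n)$ to occur exactly in the left-to-right pairing that the direct construction produces, so that rebuilding from the read-off sequence really returns the diagram one started from.
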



We can now easily translate our previous definitions for symmetric cups and weights to decorated cup diagrams. The \textit{diagrammatical weight} $\lambda(w)$ associated to an element $w\in\W^\p$ is obtained from $\alpha_w$ by replacing every {$+$} with a {$\down$} and every {$-$} with an {$\up$}. Now we can glue the weight $\la(w')$ associated to a $w'\in\W^\p$ on top of a decorated cup diagram $\underline{w}$. This gives us a new diagram, denoted $w'\underline{w}$. The \textit{decorated cap diagram} associated to $w\in\W^\p$ is defined to be the vertical reflection of $\underline{w}$ and denoted by $\overline{w}$. Gluing the weight $\la(w')$ below the cap diagram gives us $\overline{w}w'$. Our old definitions translate then into the following:
\begin{defi}
The diagrams $\la\underline{w}$, $\overline{w}\la$, $\op{deg}(\overline{w}\la\underline{w}')$ and are {\it oriented}
if they consists only of {\it oriented decorated cups, caps and edges}, i.e. cups, caps and edges of the form
\begin{eqnarray}
\label{orienteddotted}
\begin{tikzpicture}[thick, scale=0.8]
\node at (-1,-1.2) {degree};
\draw [>->] (0,0) .. controls +(0,-1) and +(0,-1) .. +(1,0) node at +(0.5,-1.2) {0};
\draw [<-<] (2,0) .. controls +(0,-1) and +(0,-1) .. +(1,0) node at +(0.5,-1.2) {1};
\draw [<-<] (4,-0.7) .. controls +(0,1) and +(0,1) .. +(1,0) node at +(0.5,-0.5) {0};
\draw [>->] (6,-0.7) .. controls +(0,1) and +(0,1) .. +(1,0)  node at +(0.5,-0.5) {1};
\draw [>-] (8,0) -- +(0,-0.7) node at +(0,-1.2) {0};
\draw [->] (9,0) -- +(0,-0.7) node at +(0,-1.2) {0};

\node at (-1,-3.2) {degree};
\draw [<->] (0,-2) .. controls +(0,-1) and +(0,-1) .. +(1,0) node at +(0.5,-1.2) {0};
\fill (0.5,-2.75) circle(2.5pt);
\draw [>-<] (2,-2) .. controls +(0,-1) and +(0,-1) .. +(1,0) node at +(0.5,-1.2) {1};
\fill (2.5,-2.76) circle(2.5pt);
\draw [>-<] (4,-2.7) .. controls +(0,1) and +(0,1) .. +(1,0) node at +(0.5,-0.5) {0};
\fill (4.5,-1.93) circle(2.5pt);
\draw [<->] (6,-2.7) .. controls +(0,1) and +(0,1) .. +(1,0)  node at +(0.5,-0.5) {1};
\fill (6.5,-1.95) circle(2.5pt);
\draw [-<] (8,-2) -- +(0,-0.7) node at +(0,-1.2) {0};
\fill (8,-2.35) circle(2.5pt);
\draw [<-] (9,-2) -- +(0,-0.7) node at +(0,-1.2) {0};
\fill (9,-2.35) circle(2.5pt);
\end{tikzpicture}
\end{eqnarray}
respectively. They get assigned a degree as shown. Cups and caps (dotted or undotted) of degree $0$ are called {\it anticlockwise}, cups and caps of degree $1$  {\it clockwise}. We denote by $\op{deg}(\la\underline{w})$ or  $\op{deg}(\overline{w}\la)$ the number of clockwise cups respectively caps in the diagram and by $\op{deg}(\overline{w'}v\underline{w})$ the total number of clockwise cups and caps; in case the diagrams are oriented.

\end{defi}

\begin{rem}
{\rm Instead of $W_\p$ we could also chose the isomorphic parabolic subgroup generated by $s_0, s_2,\ldots s_{n-1}$ and work with $\{+,-\}$-sequences of length $n$ with an odd number of $-$ and the set of {\it odd} decorated cup diagrams instead.
}
\end{rem}

\begin{prop}
\label{dottedcirclediags}
The vector space $E^\p$ can be identified with the span of all oriented circle diagrams  $\overline{w'}v\underline{w}$ for $v,w,w'\in W^\p$. It inherits a natural $\N$-grading from the degree function $\op{deg}$.
\end{prop}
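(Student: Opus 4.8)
The plan is to transport the identification of Proposition~\ref{circlediags} through the cutting (folding) operation, and then to read off the grading directly from the degree function. By Proposition~\ref{circlediags} the underlying space $E^\p$ is already identified with the span of the oriented symmetric circle diagrams $\overline{C(w')}vC(w)$, $v,w,w'\in\W^\p$; the task is to match these bijectively with the decorated circle diagrams $\overline{w'}v\underline{w}$. Lemma~\ref{cutlemma} supplies a bijection $C(\W^\p)\to\operatorname{dC}(n)$, $C(w)\mapsto\underline{w}$, and reflecting it in the horizontal axis gives the corresponding bijection $\overline{C(w')}\mapsto\overline{w'}$ on cap diagrams. First I would promote these to a single operation on circle diagrams by cutting the glued diagram $\overline{C(w')}C(w)$ along the two vertical lines between $-1,1$ and between $n,n+1$, obtaining $\overline{w'}\,\underline{w}$; this is a bijection onto the decorated circle diagrams because the cut is performed independently and by the same rules on the cup-half and the cap-half.

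The heart of the argument is to show that cutting respects orientations bijectively: a weight $v$ orients $\overline{C(w')}C(w)$ if and only if the diagrammatical weight $\la(v)$ orients $\overline{w'}\,\underline{w}$, and the induced map on orientations is a bijection. The key input is that every orientation of a symmetric circle diagram is \emph{antisymmetric}, hence completely determined by its restriction to the points $1,\dots,n$; this is exactly the content of Lemma~\ref{lem:occuring_weights} together with the frozen labels. Cutting discards the redundant left half and records the data of the arcs crossing the middle in the dots, so an orientation of the folded diagram is precisely the same datum as an antisymmetric orientation of the symmetric one. In particular the mirror pair of black circles in the symmetric picture folds to a single orientable black circle, and the observation made before Theorem~\ref{thm:dim_hom} that only ``half the black circles'' may be oriented freely becomes the statement that the decorated black circles are oriented freely; thus the two sides carry the same number of orientations. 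To make this airtight I would run through rules~\ref{rel:full_cup}--\ref{rel:rem} arc by arc and verify, using the list of oriented pieces in~\eqref{orienteddotted}, that each oriented symmetric cup or cap is sent to an oriented decorated cup, cap or edge of one of the displayed types, and conversely.

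With the bijection of oriented diagrams established, the identification of vector spaces follows at once: $E^\p$ is the span of the oriented decorated circle diagrams $\overline{w'}v\underline{w}$, $v,w,w'\in\W^\p$. The $\N$-grading is then immediate from the definition of $\op{deg}$. Since $\op{deg}(\overline{w'}v\underline{w})$ is the number of clockwise cups and caps, a well-defined non-negative integer attached to each basis element, setting $(E^\p)_d$ to be the span of the oriented decorated circle diagrams of degree $d$ yields $E^\p=\bigoplus_{d\ge 0}(E^\p)_d$. I would finally remark that this grading is the honest transport of the degree implicit on the symmetric side, matching the mirror pairing used above, so that it is compatible with the powers of $q$ appearing in Theorem~\ref{main_theorem} and with the count of Theorem~\ref{thm:dim_hom}.

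The main obstacle is the bookkeeping in the second paragraph: proving that cutting is a bijection on \emph{oriented} diagrams, i.e.\ that the antisymmetry constraint on the symmetric side is absorbed exactly by the folding without over- or under-counting, and that dotted cups and edges receive the correct orientations. The delicate points are the circles crossing the middle, where the decorations are created, and the treatment of the frozen outer labels, both of which must be checked to be consistent with the orientation conventions of~\eqref{orienteddotted}.
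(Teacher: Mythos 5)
Your proposal is correct and takes essentially the same route the paper does: the paper states Proposition~\ref{dottedcirclediags} without a separate proof precisely because it regards it as the transport of Proposition~\ref{circlediags} through the cutting bijection of Lemma~\ref{cutlemma}, with orientations matching because antisymmetric weights with the frozen labels are determined by their restriction to the points $1,\dots,n$ (Lemma~\ref{lem:occuring_weights}) and with the $\N$-grading read off from $\op{deg}$. Your arc-by-arc verification against the oriented pieces in~\eqref{orienteddotted} simply makes explicit the bookkeeping the paper leaves to the reader.
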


\begin{example}{\rm Let $n=4$ with $W^\p$ as in \eqref{halfseq} and decorated cup diagrams from Example \ref{dottedcups}. The vector space
$E^\p e_w=\operatorname{Hom}_{\O^\p_0}(\oplus_{w'\in\W^\p} P(w'),P(w))$ is for instance $9$-dimensional in case  $w=s_0s_2$; with diagram basis
\begin{equation}
\begin{tikzpicture}[thick, scale=0.8]
\begin{scope} [xshift=4.5cm]
\draw (8,0) node{$\operatorname{deg}=0$};
\draw[<-] (.5,0) to +(0,-.5);
\fill (.5,-.3) circle(2.5pt);
\draw (1,0)[>->] .. controls +(0,-.5) and +(0,-.5) .. +(.5,0);
\draw[>-] (2,0) to +(0,-.5);
\draw(.5,-.1) to +(0,.6);
\fill (.5,.25) circle(2.5pt);
\draw (1,-.1) .. controls +(0,.5) and +(0,.5) .. +(.5,0);
\draw(2,-.1) to +(0,.6);
\end{scope}
\begin{scope} [yshift=-1.5cm]
\draw (12.5,0) node{$\operatorname{deg}=1$};
\draw[<-] (.5,0) to +(0,-.5);
\fill (.5,-.3) circle(2.5pt);
\draw (1,0)[>->] .. controls +(0,-.5) and +(0,-.5) .. +(.5,0);
\draw[>-] (2,0) to +(0,-.5);
\draw(.5,-.1) to +(0,.6);
\fill (.5,.25) circle(2.5pt);
\draw(1,-.1) to +(0,.6);
\draw (1.5,-.1) .. controls +(0,.5) and +(0,.5) .. +(.5,0);
\begin{scope} [xshift=3cm]
\draw[<-] (.5,0) to +(0,-.5);
\fill (.5,-.3) circle(2.5pt);
\draw (1,0)[>->] .. controls +(0,-.5) and +(0,-.5) .. +(.5,0);
\draw[>-] (2,0) to +(0,-.5);
\draw (.5,-.1) .. controls +(0,.5) and +(0,.5) .. +(.5,0);
\draw(1.5,-.1) to +(0,.6);
\fill (1.5,.25) circle(2.5pt);
\draw(2,-.1) to +(0,.6);
\end{scope}
\begin{scope} [xshift=6cm]
\draw[<-] (.5,0) to +(0,-.5);
\fill (.5,-.3) circle(2.5pt);
\draw (1,0)[>->] .. controls +(0,-.5) and +(0,-.5) .. +(.5,0);
\draw[>-] (2,0) to +(0,-.5);
\draw (1,-.1) .. controls +(0,.5) and +(0,.5) .. +(.5,0);
\draw (.5,-.1) .. controls +(0,1) and +(0,1) .. +(1.5,0);
\end{scope}
\begin{scope} [xshift=9cm]
\draw[<-] (.5,0) to +(0,-.5);
\fill (.5,-.3) circle(2.5pt);
\draw (1,0)[<-<] .. controls +(0,-.5) and +(0,-.5) .. +(.5,0);
\draw[>-] (2,0) to +(0,-.5);
\draw (.5,-.1) .. controls +(0,.5) and +(0,.5) .. +(.5,0);
\fill (.75,.25) circle(2.5pt);
\draw(1.5,-.1) to +(0,.6);
\draw(2,-.1) to +(0,.6);
\end{scope}
\end{scope}

\begin{scope} [yshift=-3cm, xshift=1.5cm]
\draw (11,0) node{$\operatorname{deg}=2$};
\draw[<-] (.5,0) to +(0,-.5);
\fill (.5,-.3) circle(2.5pt);
\draw (1,0)[>->] .. controls +(0,-.5) and +(0,-.5) .. +(.5,0);
\draw[>-] (2,0) to +(0,-.5);
\draw (.5,-.1) .. controls +(0,.5) and +(0,.5) .. +(.5,0);
\draw (1.5,-.1) .. controls +(0,.5) and +(0,.5) .. +(.5,0);
\begin{scope} [xshift=3cm]
\draw[<-] (.5,0) to +(0,-.5);
\fill (.5,-.3) circle(2.5pt);
\draw (1,0)[<-<] .. controls +(0,-.5) and +(0,-.5) .. +(.5,0);
\draw[>-] (2,0) to +(0,-.5);
\draw(.5,-.1) to +(0,.6);
\fill (.5,.25) circle(2.5pt);
\draw (1,-.1) .. controls +(0,.5) and +(0,.5) .. +(.5,0);
\draw(2,-.1) to +(0,.6);
\end{scope}

\begin{scope}[xshift=6cm]
\draw[<-] (.5,0) to +(0,-.5);
\fill (.5,-.3) circle(2.5pt);
\draw (1,0)[>->] .. controls +(0,-.5) and +(0,-.5) .. +(.5,0);
\draw[>-] (2,0) to +(0,-.5);
\draw (.5,-.1) .. controls +(0,.5) and +(0,.5) .. +(.5,0);
\fill (.75,.27) circle(2.5pt);
\draw (1.5,-.1) .. controls +(0,.5) and +(0,.5) .. +(.5,0);
\fill (1.75,.27) circle(2.5pt);
\end{scope}
\end{scope}
\begin{scope} [yshift=-4.5cm, xshift=4.5cm]
\draw (8,0) node{$\operatorname{deg}=3$};
\draw[<-] (.5,0) to +(0,-.5);
\fill (.5,-.3) circle(2.5pt);
\draw (1,0)[<-<] .. controls +(0,-.5) and +(0,-.5) .. +(.5,0);
\draw[>-] (2,0) to +(0,-.5);
\draw (1,-.1) .. controls +(0,.5) and +(0,.5) .. +(.5,0);
\draw (.5,-.1) .. controls +(0,1) and +(0,1) .. +(1.5,0);
\end{scope}
\end{tikzpicture}
\end{equation}
The decorated cup diagram is our fixed $\underline{w}$, the decorated cap diagrams vary through $\overline{w'}$ indicating the corresponding summand. The degrees match with the $q$ powers in the Kazhdan-Lusztig polynomials. It is easy to determine a diagram basis for $E=E^\p$ and show that $\op{dim}E^\p=67$ with graded Poincare polynomials
\begin{equation*}
\begin{array}[t]{lll}
P_{E e_{++++}}=1+q+q^2,&P_{E e_{--++}}= 1+3q+2q^2+q^3,\\
P_{E e_{-+-+}}=1+4q+3q^2+q^3,& P_{E e_{-++-}}=1+2q+3q^2+q^3,\\
P_{E e_{+--+}}=1+2q+3q^2+q^3,& P_{E e_{+-+-}}=1+3q+3q^2+3q^3+q^4,\\
P_{E e_{++--}}=1+3q+5q^2+3q^3+q^4,& P_{E e_{----}}=1+2q+4q^2+2q^3+q^4.
\end{array}
\end{equation*}}
\end{example}

\subsection{The action of $\H$ on $\Nmod$ diagrammatically}
%
%
%
Although we have an explicit description of the Kazhdan-Lusztig basis, the action of $\H$ on a basis element is still not easy to calculate. But there is an easy way to multiply some $\underline{N}_w$ with a $C_{s_i}$ in a diagrammatic way.\\
Consider the free $\L$-module $\Nmod_t$ with basis $\operatorname{dC}(n)$. Define an action of  $\mathbb{DT}_n$ as follows: a generator $T=e_i$ acts on a basis vector $b$ from $\Nmod_t$ by putting $T$ on top of $b$, concatenate and then remove all caps and internal circles according to the following rules (giving rise to an $\L$-multiple of a basis vector from $\Nmod_t$ or zero):
\begin{equation}
\label{actionrel}
\begin{tikzpicture}[thick,scale=0.85]
\draw node{\it i.)} +(1,0) circle(10pt) +(2.3,0) node{$=q+q^{-1}$};
\draw (3.8,0) node{\it ii.)} +(.7,-.35) to +(.7,.45);
\fill (4.5,-.05) circle(2.5pt) +(0,.2) circle(2.5pt);
\draw (4.9,0) node {$=$} +(.3,-.35) to +(.3,.45);
\draw (6.2,0) node{\it iii.)} +(1.05,0) circle(10pt) +(1.8,0) node{$=0$};
\fill (6.9,0) circle(2.5pt);
\draw (9.2,0) node{\it iv.)};
\draw (9.9,-.35) node{$\otimes$} .. controls +(0,1) and +(0,1) .. +(.5,0)  node{$\otimes$} +(1,.35) node{$=0$};
\draw (12,0) node{\it v.)};
\draw (12.6,-.35) node{$\otimes$} .. controls +(0,1) and +(0,1) .. +(.5,0)  node{$\otimes$} +(1,.35) node{$=1$};
\fill (12.85,.37) circle(2.5pt);
\end{tikzpicture}
\end{equation}

(The last two relations hold for caps only, the symbols $\otimes$ indicate that the intersection points of the tangle with the lower face of the rectangle.)

\begin{rem}{\rm The first relation states that any circle with no decorations can be removed and, in turn, the diagram is multiplied by $(q+q^{-1})$. The second relation means that from any circle or line an even number of decorations can be removed. The third relation says that the presence of a circle with only one decoration multiplies the whole diagram by $0$ and hence annihilates the whole diagram. An undotted cap annihilates the diagram, whereas a cap decorated by one dot can just be removed.
}
\end{rem}

\begin{example}
\label{quiver}
{\rm In the situation of Example~\ref{dottedcups}, the action of the $e_i$ is given as follows (where we omitted the $e_i$'s which act by zero and indicated multiplication by $(q+q^{1})$ by the circled dashed arrows).
\begin{eqnarray*}
\begin{tikzpicture}[thick,scale=.8]

\begin{scope}[xshift=3cm,yshift=1.75cm]
\draw (.5,0) to +(0,-.5);
\draw (1,0) to +(0,-.5);
\draw (1.5,0) to +(0,-.5);
\draw (2,0) to +(0,-.5);
\end{scope}

\draw[->] (4.25,1) to node[right]{$e_0$} (4.25,.2) ;

\begin{scope}[xshift=3cm]
\draw (0.5,0) .. controls +(0,-.5) and +(0,-.5) .. +(.5,0);
\fill (.75,-.35) circle(2.5pt);
\draw (1.5,0) to +(0,-.5);
\draw (2,0) to +(0,-.5);
\end{scope}

\draw[->,dashed] (4.25,-.7) .. controls +(-.5,-.8) and +(.5,-.8) .. +(.1,0) node[midway,below]{$e_0$};
\draw[->] (5.25,-.2) to node[above]{$e_2$} (6.25,-.2);
\draw[<-] (5.25,-.3) to node[below]{$e_0$} (6.25,-.3);

\begin{scope}[xshift=6cm]
\draw (.5,0) to +(0,-.5);
\fill (.5,-.25) circle(2.5pt);
\draw (1,0) .. controls +(0,-.5) and +(0,-.5) .. +(.5,0);
\draw (2,0) to +(0,-.5);
\end{scope}

\draw[->,dashed] (7.25,-.7) .. controls +(-.5,-.8) and +(.5,-.8) .. +(.1,0) node[midway,below]{$e_2$};
\draw[->,dashed] (10.25,-1.5) .. controls +(-.5,-.8) and +(.5,-.8) .. +(.1,0) node[midway,below]{$e_1$};
\draw[->,dashed] (10.25,1) .. controls +(-.5,.8) and +(.5,.8) .. +(.1,0) node[midway,above]{$e_3$};

\draw[->] (8.25,0) to node[above]{$e_3$} (9.25,.6);
\draw[<-] (8.25,-.2) to (9.25,.4) node[below]{$e_2$};

\draw[<-] (8.25,-.3) to (9.25,-.9) node[above]{$e_2$};
\draw[->] (8.25,-.5) to node[below]{$e_1$} (9.25,-1.1);

\begin{scope}[xshift=9cm,yshift=.75cm]
\draw (.5,0) to +(0,-.5);
\draw (1,0) to +(0,-.5);
\fill (.5,-.25) circle(2.5pt);
\draw (1.5,0) .. controls +(0,-.5) and +(0,-.5) .. +(.5,0);
\end{scope}

\begin{scope}[xshift=9cm,yshift=-.75cm]
\draw (.5,0) .. controls +(0,-.5) and +(0,-.5) .. +(.5,0);
\draw (1.5,0) to +(0,-.5);
\fill (1.5,-.25) circle(2.5pt);
\draw (2,0) to +(0,-.5);
\end{scope}

\draw[->] (11.25,.5) to node[above]{$e_1$} (12.25,-.1);
\draw[->] (11.25,-1) to node[below]{$e_3$} (12.25,-.3);
\draw[->,dashed] (13.25,-.7) .. controls +(-.5,-.8) and +(.5,-.8) .. +(.1,0) node[midway,below]{$e_1,e_3$};

\begin{scope}[xshift=12cm]
\draw (.5,0) .. controls +(0,-.5) and +(0,-.5) .. +(.5,0);
\draw (1.5,0) .. controls +(0,-.5) and +(0,-.5) .. +(.5,0);
\end{scope}

\draw[->,dashed] (16.25,-1) .. controls +(-.5,-.8) and +(.5,-.8) .. +(.1,0) node[midway,below]{$e_2$};
\draw[->] (14.25,-.2) to node[above]{$e_2$} (15.25,-.2);
\draw[<-] (14.25,-.3) to node[below]{$e_1,e_3$} (15.25,-.3);

\begin{scope}[xshift=15cm]
\draw (.5,0) .. controls +(0,-1) and +(0,-1) .. +(1.5,0);
\draw (1,0) .. controls +(0,-.5) and +(0,-.5) .. +(.5,0);
\end{scope}

\draw[->,dashed] (15,1.5) .. controls +(-1,.5) and +(-1,-.5) .. +(0,-.1) node[midway,left]{$e_0$};
\draw[->] (16.15,.2) to node[left]{$e_0$} (16.15,1.1);
\draw[<-] (16.35,.2) to node[right]{$e_2$} (16.35,1.1);

\begin{scope}[xshift=15cm,yshift=1.75cm]
\draw (.5,0) .. controls +(0,-.5) and +(0,-.5) .. +(.5,0);
\fill (.75,-.35) circle(2.5pt);
\draw (1.5,0) .. controls +(0,-.5) and +(0,-.5) .. +(.5,0);
\fill (1.75,-.35) circle(2.5pt);
\end{scope}
\end{tikzpicture}
\end{eqnarray*}
}
\end{example}

\begin{lem}
\label{lem:eventoeven}
The above rules define an action of  $\mathbb{DT}_n$ on $\Nmod_t$.
\end{lem}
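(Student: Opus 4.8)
The plan is to define, for an arbitrary (not necessarily generating) $(n,n)$-tangle $T$, an operator $\rho(T)$ on $\Nmod_t$ by stacking $T$ on top of a basis diagram $b\in\op{dC}(n)$, concatenating, and then reducing the resulting diagram by the local moves \eqref{actionrel}; extending $\L$-linearly in $b$ and over $\L$-combinations of tangles produces a map $\rho\colon\mathbb{T}_n\to\op{End}_\L(\Nmod_t)$ whose restriction to the subalgebra $\mathbb{DT}_n$ is the sought-after action, with $\rho(e_i)$ the prescribed operator. Three things then have to be checked: that the reduction has a well-defined outcome, that this outcome is again an $\L$-multiple of a single element of $\op{dC}(n)$ (or zero), and that $\rho$ respects concatenation. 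The second point is exactly the even-to-even statement that gives the lemma its name.

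First I would argue that the reduction terminates and is confluent, so that each stacked diagram has a unique normal form up to a factor in $\L$. Termination is clear, since every move in \eqref{actionrel} strictly decreases the number of caps plus internal circles plus dots. For local confluence one inspects the finitely many overlaps of redexes: on a single component only the parity of the number of dots matters, because relation \textit{(ii)} cancels dots in pairs and the accessibility condition lets one slide all dots of a component to a common position. Thus an internal circle evaluates to $q+q^{-1}$ if undotted and to $0$ if it carries an odd number of dots (relations \textit{(i)}, \textit{(iii)}), and a cap is deleted if it carries an odd number of dots and annihilates the diagram otherwise (relations \textit{(iv)}, \textit{(v)}), all independently of the order of application. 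Newman's lemma then yields a unique normal form, which by construction has no caps and no internal circles.

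The heart of the argument is to show that this normal form lies in $\L\cdot\op{dC}(n)\cup\{0\}$. Since all caps have been removed, every surviving bottom point is joined to a top point, so it remains to check that the invariant ``(number of dotted edges) $+$ (number of undotted cups)'' stays even. Here I would run through the local pictures describing how the cap of $e_i$ (for $1\le i\le n-1$), respectively the dotted cap of $e_0$, meets the part of $b$ sitting at positions $i,i+1$: the cap either closes a cup of $b$ into a circle, splices two cups of $b$ (or a cup and an edge) into a new cup or edge, or joins two edges of $b$ into a cap that is then resolved by \textit{(iv)}, \textit{(v)}, while simultaneously the new cup of $e_i$ appears at the top. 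In each case one records the change in the two counts and in the dot-parities, using that \textit{(ii)} makes only parities relevant; the two dots carried by $e_0$ are precisely what compensates the parity in the $i=0$ case. A finite check shows the sum of the two counts always changes by an even number (or the diagram is annihilated), so the evenness condition defining $\op{dC}(n)$ is preserved, and after sliding dots to accessible positions one obtains a genuine element of $\op{dC}(n)$.

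It remains to prove $\rho(T_1T_2)=\rho(T_1)\rho(T_2)$, so that $\rho$ is an algebra homomorphism and hence an action. This is where confluence pays off: stacking the concatenation $T_1T_2$ on $b$ gives, as an unreduced diagram, exactly the result of stacking $T_1$ on the stack of $T_2$ on $b$, and by confluence we may perform all reductions taking place in the lower region (those coming from $T_2$ and $b$) first, obtaining $\rho(T_2)(b)$, and only then stack $T_1$ and finish reducing. Thus $\rho(T_1T_2)(b)=\rho(T_1)\bigl(\rho(T_2)(b)\bigr)$ for every basis diagram $b$, and $\rho$ is multiplicative; restricting to $\mathbb{DT}_n$ gives the action. (Note that relations \textit{(i)}, \textit{(ii)}, \textit{(iii)} coincide with the defining relations \eqref{fig:TL-Rel}, so the action even factors through the generalized Temperley--Lieb algebra, though that is not needed here.) I expect the main obstacle to be the bookkeeping in the even-to-even step: enumerating all local configurations at $i,i+1$, including those involving dotted cups and linked pairs of $b$, and verifying in each that the parity of dotted edges plus undotted cups is unchanged, with the exceptional dotted generator $e_0$ treated separately.
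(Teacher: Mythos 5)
Your proposal is correct and, at its core, coincides with the paper's proof: the paper's entire argument is exactly your ``even-to-even'' case analysis, checking for each generator $e_i$ (with $e_0$ treated separately) that every local configuration of cups, dotted cups and edges at positions $i,i+1$ is sent to zero or to an $\L$-multiple of an even decorated cup diagram. The termination/confluence and multiplicativity layer you add is sound, but it is scaffolding the paper takes for granted, since stack-and-reduce evaluation for the subalgebra $\mathbb{DT}_n\subset\mathbb{T}_n$ is automatically compatible with concatenation once the reduction rules \eqref{actionrel} yield well-defined normal forms.
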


\begin{proof}
It is enough to show that the an even decorated cup is either sent to zero or a multiple of an even decorated cup by each of the generators $e_i$, $0\leq i\leq n-1$. The following list shows all the possible configurations of cups without dots involving the points $i$ and $i+1$ (indicated by little boxes):
\begin{center}
\begin{tikzpicture}[thick,scale=0.85]

\draw (1,0) rectangle (1.1,.1);
\draw (1,0) .. controls +(0,-.5) and +(0,-.5) .. +(.5,0) rectangle +(.4,.1);

\begin{scope}[xshift=2cm]
\draw (1,0) .. controls +(0,-.5) and +(0,-.5) .. +(.5,0) rectangle +(.6,.1);
\draw (2,0) rectangle (1.9,.1);
\draw (2,0) .. controls +(0,-.5) and +(0,-.5) .. +(.5,0);
\end{scope}

\begin{scope}[xshift=4.5cm]
\draw (1,0) .. controls +(0,-1) and +(0,-1) .. +(1.5,0) rectangle +(1.4,.1);
\draw (1.5,0) .. controls +(0,-.5) and +(0,-.5) .. +(.5,0) rectangle +(.6,.1);
\end{scope}

\begin{scope}[xshift=7cm]
\draw (1,0) rectangle (1.1,.1);
\draw (1.5,0) rectangle (1.4,.1);
\draw (1,0) .. controls +(0,-1) and +(0,-1) .. +(1.5,0);
\draw (1.5,0) .. controls +(0,-.5) and +(0,-.5) .. +(.5,0);
\end{scope}
\end{tikzpicture}
\end{center}

Applying $e_i$ for $i\not=0$ multiplies with $(q+q^{-1})$ or creates from the two involved undotted cups two new undotted cups. Applying $e_0$ gives zero in the first case and replaces two undotted cups by two dotted cups in the last case. The other configurations are impossible. The following list shows all the possible configurations involving the points $i$ and $i+1$ with at least one dot:

\begin{center}
\begin{tikzpicture}[thick,scale=0.8]

\draw (1,0) rectangle (1.1,.1); \draw (1,0) .. controls
+(0,-.5) and +(0,-.5) .. +(.5,0) rectangle +(.4,.1); \fill
(1.25,-.35) circle(2.5pt);

\begin{scope}[xshift=2cm]
\draw (1,0) .. controls +(0,-.5) and +(0,-.5) .. +(.5,0)
rectangle +(.6,.1); \draw (2,0) rectangle (1.9,.1); \draw (2,0)
.. controls +(0,-.5) and +(0,-.5) .. +(.5,0); \fill (1.25,-.35)
circle(2.5pt);
\end{scope}

\begin{scope}[xshift=4.5cm]
\draw (1,0) .. controls +(0,-.5) and +(0,-.5) .. +(.5,0)
rectangle +(.6,.1); \draw (2,0) rectangle (1.9,.1); \draw (2,0)
.. controls +(0,-.5) and +(0,-.5) .. +(.5,0); \fill (2.25,-.35)
circle(2.5pt);
\end{scope}

\begin{scope}[xshift=7cm]
\draw (1,0) .. controls +(0,-.5) and +(0,-.5) .. +(.5,0)
rectangle +(.6,.1); \draw (2,0) rectangle (1.9,.1); \draw (2,0)
.. controls +(0,-.5) and +(0,-.5) .. +(.5,0); \fill (1.25,-.35)
circle(2.5pt); \fill (2.25,-.35) circle(2.5pt);
\end{scope}

\begin{scope}[xshift=9.5cm]
\draw (1,0) .. controls +(0,-1) and +(0,-1) .. +(1.5,0)
rectangle +(1.4,.1); \draw (1.5,0) .. controls +(0,-.5) and
+(0,-.5) .. +(.5,0) rectangle +(.6,.1); \fill (1.75,-.75)
circle(2.5pt);
\end{scope}

\begin{scope}[xshift=12cm]
\draw (1,0) rectangle (1.1,.1); \draw (1.5,0) rectangle
(1.4,.1); \draw (1,0) .. controls +(0,-1) and +(0,-1) ..
+(1.5,0); \draw (1.5,0) .. controls +(0,-.5) and +(0,-.5) ..
+(.5,0); \fill (1.75,-.75) circle(2.5pt);
\end{scope}
\end{tikzpicture}
\end{center}

Applying $e_i$ for $i\not=0$ gives zero or creates from the two involved cups two new cups, from one dotted and one undotted we obtain again one dotted and one undotted, whereas the two dotted give two undotted cups. Finally one has to consider the situations, where not only cups are involved:

\begin{center}
\begin{tikzpicture}[thick, scale=0.9]

\draw (1,0) rectangle +(.1,.1);
\draw (1,0) to +(0,-.5);
\draw (1.5,0) rectangle +(-.1,.1);
\draw (1.5,0) .. controls +(0,-.5) and +(0,-.5) .. +(.5,0);

\begin{scope}[xshift=2cm]
\draw (1,0) .. controls +(0,-.5) and +(0,-.5) .. +(.5,0) rectangle +(.6,.1);
\draw (2,0) rectangle +(-.1,.1);
\draw (2,0) to +(0,-.5);
\end{scope}

\begin{scope}[xshift=4cm]
\draw (1,0) rectangle +(.1,.1);
\draw (1,0) to +(0,-.5);
\draw (1.5,0) rectangle +(-.1,.1);
\draw (1.5,0) to +(0,-.5);
\end{scope}

\begin{scope}[xshift=6cm]
\draw (1,0) rectangle +(.1,.1);
\draw (1,0) to +(0,-.5);
\fill (1,-.25) circle(2.5pt);
\draw (1.5,0) rectangle +(-.1,.1);
\draw (1.5,0) .. controls +(0,-.5) and +(0,-.5) .. +(.5,0);
\end{scope}

\begin{scope}[xshift=8cm]
\draw (1,0) .. controls +(0,-.5) and +(0,-.5) .. +(.5,0) rectangle +(.6,.1);
\draw (2,0) rectangle +(-.1,.1);
\draw (2,0) to +(0,-.5);
\fill (1.25,-.35) circle(2.5pt);
\end{scope}

\begin{scope}[xshift=10cm]
\draw (1,0) .. controls +(0,-.5) and +(0,-.5) .. +(.5,0) rectangle +(.6,.1);
\draw (2,0) rectangle +(-.1,.1);
\draw (2,0) to +(0,-.5);
\fill (1.25,-.35) circle(2.5pt);
\fill (2,-.25) circle(2.5pt);
\end{scope}

\begin{scope}[xshift=12cm]
\draw (1,0) rectangle +(.1,.1);
\draw (1,0) to +(0,-.5);
\draw (1.5,0) rectangle +(-.1,.1);
\draw (1.5,0) to +(0,-.5);
\fill (1,-.25) circle(2.5pt);
\end{scope}

\end{tikzpicture}
\end{center}
Applying $e_i$ for $i\not=0$ keeps on undotted cup, multiplies by $(q+q^{-1})$, annihilates, creates a dotted edge and a dotted cup, annihilates, annihilates or creates a undotted cup respectively. For $e_0$, the configuration is impossible for the second, fifth and sixth diagram. Otherwise, applying $e_0$ creates a dotted edge and a dotted cup, creates a dotted cup, creates a dotted cup and an undotted edge, and annihilates respectively. Hence in each case we again obtain an even decorated cup diagram. The lemma follows.
\end{proof}

Because of Lemma \ref{cutlemma}, the module $\Nmod_t$ is isomorphic to $\Nmod$ as an $\L$-module by sending $\underline{N}_w$ to the decorated cup diagram $\underline{w}$. We fix this isomorphism, $\Phi$.

\begin{thm}
\label{cupcap}
The following diagram commutes:
	\begin{eqnarray}
\label{commdiag}
	\begin{pspicture}(0,0)(4,4.7)
	\rput(0,0){$\Nmod$}
	\rput(4,0){$\Nmod_t$}
	\rput(0,4){$\Nmod$}
	\rput(4,4){$\Nmod_t$}
	\psline{->}(1,0)(3,0)
	\psline{->}(1,4)(3,4)
	\psline{->}(0,3)(0,1)
	\psline{->}(4,3)(4,1)
	\rput(2,0.5){$\Phi$}
	\rput(2,4.5){$\Phi$}
	\rput(-0.7,2){$C_{s_i}$}
	\rput(4.6,2){$e_i$}
	\end{pspicture}
\end{eqnarray}
 Moreover, the $\mathbb{DT}_n$-action factors through $\operatorname{\widehat{TL}}(\W)$, the quotient of $\mathbb{DT}_n$ modulo the relations (i)-(iii) in \eqref{actionrel} in case $n$ is odd and modulo the additional subspace $I$ spanned by all diagrams without edges and an odd number of undotted cups (or equivalently of undotted caps) in case $n$ os even. The resulting $\operatorname{\widehat{TL}}(\W)$-action is faithful.
\end{thm}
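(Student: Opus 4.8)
The statement combines three claims, and I would establish them in the order: commutativity of \eqref{commdiag}, factorisation through $\operatorname{\widehat{TL}}(\W)$, and faithfulness. Since $\Phi$ identifies the $\L$-bases $\{\underline{N}_w\}$ and $\{\underline{w}\}$, for the square it suffices to verify $\Phi(\underline{N}_wC_{s_i})=e_i\cdot\underline{w}$ for each $w\in\W^\p$ and each generator $s_i$. I would compute the left-hand side by expanding $\underline{N}_w$ in the standard basis (whose support is read off $C(w)$ by Theorem~\ref{main_theorem}) and applying \eqref{N}, and the right-hand side by the tangle rules \eqref{actionrel}. Both sides are governed entirely by the local picture at the points $i,i+1$ (resp.\ $1,2$ when $i=0$), so this reduces to the finite list of configurations already enumerated in the proof of Lemma~\ref{lem:eventoeven}, refined by the cup-diagram transitions \eqref{a1}, \eqref{a2}, \eqref{b1}. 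In each configuration one matches the two sides: closing a cup into an undotted circle produces the scalar $(q+q^{-1})$ of a descent $l(ws_i)<l(w)$; a cap left undotted at the bottom annihilates, reproducing a vanishing of \eqref{N}; and a reconnection of two cups produces a new basis vector $\underline{w'}$ with the same coefficient as the corresponding $\underline{N}_{w'}$. The decorations coming from linked cups are handled by the dot moves (ii) and (v) of \eqref{actionrel}. Carrying this out for $i=0$ and $i\neq0$ gives commutativity.

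For the factorisation it is enough to see that the defining relations of $\operatorname{\widehat{TL}}(\W)$ act by zero on $\Nmod_t$. The relations (i)--(iii) of \eqref{actionrel} are the very moves by which the $\mathbb{DT}_n$-action was defined, so two tangles differing by them induce the same operator and the action descends through them at once; note that this quotient already differs from Green's $\operatorname{TL}(\W)$ of Proposition~\ref{prop:Green}, since here a singly dotted circle is set to $0$ rather than used to erase the remaining dots. When $n$ is even I must additionally kill the subspace $I$, and here I would use the parity invariant underlying $\op{dC}(n)$: a decorated cup diagram has an even number of undotted cups plus dotted edges, so its number of through-edges is $\equiv n\pmod 2$, and by Lemma~\ref{lem:eventoeven} the action keeps every image inside $\L\cdot\op{dC}(n)\cup\{0\}$. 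A diagram $T$ with no through-edges can, stacked on any $b$, only cap off at the bottom and leave its own top cups untouched, so $T\cdot b$ is again edge-free with the same (odd) number of undotted cups as $T$; such a diagram is not in $\op{dC}(n)$, forcing $T\cdot b=0$. For $n$ odd there are simply no edge-free $(n,n)$-tangles (an odd number of top points cannot be paired into cups), which is exactly why no correction is required.

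Finally I would prove that the induced $\operatorname{\widehat{TL}}(\W)$-action is faithful, which is the real content. Using (i)--(iii), every decorated $(n,n)$-tangle reduces to a normal form with no internal circles (undotted ones absorbed into scalars, singly dotted ones killed by (iii)), at most one dot per strand by (ii), and all dots pushed to the accessible side; for $n$ even the edge-free classes with an odd number of undotted cups are discarded. Such normal forms split into a cap-part, some through-edges, and a cup-part, and they span $\operatorname{\widehat{TL}}(\W)$. To show they act as linearly independent operators I would, for a fixed normal form $T$ with cap-part $\overline{c}$ and cup-part $c'$, feed in the element $b\in\op{dC}(n)$ whose cup-part is the reflection of $\overline{c}$ with matching dots (completed to a valid diagram): then every cap of $T$ closes against the corresponding cup of $b$ into an undotted circle, each evaluating by (i) to $(q+q^{-1})$, so that $T\cdot b$ is a nonzero scalar times the basis vector with cup-part $c'$. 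Thus the matrix of the operator $\rho(T)$ in the basis $\op{dC}(n)$ has a distinguished nonzero entry in the slot $(b,c')$; since distinct normal forms occupy distinct slots, the $\rho(T)$ are linearly independent, so they form a basis of $\operatorname{\widehat{TL}}(\W)$ and the action is faithful.

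The hard part is this last step, and within it the control of decorations and parity. One must choose the test vector $b$ so that no circle it creates carries a single dot---otherwise (iii) collapses $T\cdot b$ to $0$ and the distinguished slot is lost---and so that both $b$ and the output genuinely lie in $\op{dC}(n)$, i.e.\ satisfy the evenness condition and, via Lemma~\ref{lem:occuring_weights}, correspond to weights in $S_{sym}(n)$. This is exactly where the quotient by $I$ for $n$ even is forced: an edge-free class with an odd number of undotted cups admits no admissible test vector and acts by zero, so $I$ is precisely the extra kernel, and the resulting quotient is neither too large nor too small. Because the normal forms both span $\operatorname{\widehat{TL}}(\W)$ and act by linearly independent operators, faithfulness follows directly from the slot argument; the cellular structure of the type $D_n$ generalized Temperley--Lieb algebra \cite{Gre98}, \cite{GL}, together with a dimension comparison against the oriented circle diagrams of Proposition~\ref{dottedcirclediags}, provides an independent check on the count.
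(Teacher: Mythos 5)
Your handling of the commutativity of \eqref{commdiag} and of the factorisation is essentially the paper's own route: a local case analysis along the configurations \eqref{a1}, \eqref{a2}, \eqref{b1}, matching circle removal with descents and cap annihilation with the vanishing in \eqref{N}, and for $n$ even the parity argument showing that an edge-free tangle with an odd number of undotted cups must act by zero because its output could not satisfy the evenness condition of $\operatorname{dC}(n)$ (Lemma \ref{lem:eventoeven}). One caveat: you gloss the case $ws_i\notin\W^\p$, where $\underline{N}_wC_{s_i}$ is \emph{not} computed by a single application of \eqref{N} to each standard basis term and then read off --- the product can be a different Kazhdan--Lusztig basis element $\underline{N}_{w'}$, and verifying that its coefficients (including the $q$-powers) match the tangle computation is precisely what the paper's relative Kazhdan--Lusztig elements are introduced for; your phrase ``a reconnection of two cups produces a new basis vector with the same coefficient'' asserts rather than proves this, but the intended argument is the paper's.

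The genuine gap is in the faithfulness step. The inference ``the matrix of $\rho(T)$ has a distinguished nonzero entry in the slot $(b,c')$; since distinct normal forms occupy distinct slots, the $\rho(T)$ are linearly independent'' is invalid: to extract $\alpha_{T_0}=0$ from $\sum_T\alpha_T\rho(T)=0$ by evaluating at the slot of $T_0$, you would need every other $\rho(T)$ to vanish in that slot, and they do not. Concretely, let $T_0=e_1$ and let $b_0\in\operatorname{dC}(n)$ have a single undotted cup at positions $1,2$ (with one dotted edge to make it even): then $e_1\cdot b_0=(q+q^{-1})\,b_0$, while the identity tangle also sends $b_0\mapsto b_0$, so both operators are nonzero in the slot $(b_0,b_0)$. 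More generally, a cup of the test vector can thread through the propagating edges of another basis element $T'$ and reappear at the top, so $T'\cdot b$ may have more cups than $T'$ itself and hit a foreign slot. What is needed --- and what the paper's proof supplies --- is a triangularity established by a double induction over the number $m$ of cups and over the $q$-degree: one shows $C_{i_0}.b_{i_0}=(q+q^{-1})^m\,C$ because each of the $m$ internal circles closes with an even number of dots, whereas every competing $b_i$ producing $C$ from $C_{i_0}$ yields either a circle with an odd number of dots (killed by relation (iii) of \eqref{actionrel}) or fewer than $m$ circles, hence a contribution of strictly smaller $q$-degree; the leading term of the coefficient of $C$ then isolates $\alpha_{i_0}$, and induction finishes. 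Your test vectors (reflect the cap part, match the dots, fix parity by a dot on an edge, which exists exactly when $n$ is odd) coincide with the paper's $C_{i_0}$, but without this leading-term induction the linear independence, and hence faithfulness, does not follow from your slot argument.
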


	\begin{rem}\label{rem:TL}
	The first two relations coincide with the first two relations from \eqref{fig:TL-Rel}. Relation $(iii)$ implies the remaining one, and so the $\H$-action factors through the generalized Temperley-Lieb algebra $\operatorname{TL}(\W)$.
	\end{rem}

%
%
%
%
The proof of Theorem \ref{cupcap} heavily relies on the {\it locality} of the action, that means to understand the action on cup diagrams, usually it is enough to consider those parts of the cup diagram that change under the action. To be able to handle those we define the relative Kazhdan-Lusztig elements.
	\begin{defi}
\label{subKL}
	A {\it cup subdiagram} is any part of the cup diagram  $C(w)$, $w\in W^\p$, which itself is a diagram of cups. Let $SC(w)$ be a symmetric cup subdiagram of $C(w)$. The \textit{relative Kazhdan-Lusztig element} associated to $SC(w)$ is $\sum q^{\frac{\operatorname{cl}(w'SC(w))}{2}}N_{w'}$ where the sum runs over all different subweights $w'$ such that $SP(w)$ is oriented. A {\it subweight} is obtained from a weight by taking the subset
	$$P=\{i\mid\mbox{$i$ is the starting or ending point of a cup in $SP(w)$}\}\subset\{1,\hdots,2n\}$$
	and considering only the orientations at these points.
	\end{defi}
	\begin{rem}
\begin{enumerate}
\item Given a symmetric cup subdiagram different weights may lead to the same subweights. In Definition \ref{subKL} we sum over different subweights. Every subweight appears only once in our sum even when there is more than one weight having this subweight.
	\item To denote weights we sometimes wrote just the part between $1$ and $n$ knowing the rest by antisymmetry and the frozen parts. We do the same with our subweights, writing only those $i$ that lie between $1$ and $n$, as usual indicated by a $|$ at the start and $]$ at the end of the sequence.
	\item
	If we write a cup diagram as a disjoint union of symmetric cup subdiagrams we are able to calculate our Kazhdan-Lusztig basis element from the relative Kazhdan-Lusztig elements. The Kazhdan-Lusztig polynomial in front of a weight is the product of the coefficients of the subweights in the relative Kazhdan-Lusztig elements to which the weight restricts. Indeed, by  Theorem \ref{main_theorem} we have to count clockwise oriented cups in our cup diagram. The latter is a disjoint union of symmetric cup subdiagrams, so the total number of clockwise oriented cups is the sum of clockwise oriented cups in each cup subdiagram. Multiplying the coefficients simply adds up the numbers of clockwise oriented cups in each cup subdiagram or gives $0$ if one of the cups in any of the cup subdiagrams is not oriented.
\end{enumerate}	
\end{rem}
We can consider the action of the $C_{s_i}$'s on our subweights even though our subweights need not lie in some $\W^\p$ for some smaller $n$. Given a cup subdiagram $SP(w)$ we have a map
$$\{\mbox{weights}\}\twoheadrightarrow\{\mbox{subweights associated to $SP(w)$}\}.$$
From \eqref{N} we see that the action of some $C_{s_i}$ on $N_w$ is described using only $ws_i$ and $w$. But $s_i$ operates on the weight $w$ by changing at most two points, namely $i$ and $i+1$ for $1\leq i\leq n-1$, or $1$ and $2$ if $i=0$. Hence we can define the action of $C_{s_i}$ on a subweight in case it contains both points by taking a weight that restricts to the subweight, operate with $C_{s_i}$ on this weight according to the multiplication rules, and then restrict the result again.

		\begin{proof}[Proof of Theorem \ref{cupcap}]
		We repeatedly use Theorem \ref{main_theorem} without explicitly referring to it. Also, throughout the whole proof, we are going to consider only the `relevant' parts of the diagrams, meaning the cup subdiagrams consisting of those parts that are involved in the change of the cup diagram. To prove the theorem we have to consider three cases:
		\begin{enumerate}[(I)]
			\item $ws_i\in\W^\p$ and $l(ws_i)>l(w)$
			\item $ws_i\in\W^\p$ and $l(ws_i)<l(w)$
			\item $ws_i\notin\W^\p$
		\end{enumerate}

{\bf Case (I)}: We know that $\underline{N}_wC_{s_i}=\underline{N}_{ws_i}$. We have to go through the cases \eqref{a1}, \eqref{a2}, \eqref{b1}. In all the following pictures, above the gray line we have the involved part of the tangle $e_i$ while below the relevant part of the cup diagram $T(w)$ is shown. We give the example $1\leq i\leq n-1$ in case \eqref{a1} and omit the other cases. We have
						$$
						\begin{pspicture}(-3.5,0)(3.5,-0.5)
						\psline[linecolor=gray, linewidth=0.5pt](0,0)(0,-1.2)
						\psellipticarc(1,0)(0.5,0.5){180}{0}
						\psellipticarc(3,0)(0.5,0.5){180}{0}
						\rput(1.5,0.3){\tiny$i$}
						\rput(2.5,0.3){\tiny$i+1$}
						\rput(3.5,0.3){\tiny$s$}
						\psellipticarc(-1,0)(0.5,0.5){180}{0}
						\psellipticarc(-3,0)(0.5,0.5){180}{0}
						\rput(-1.5,0.3){\tiny$-i$}
						\end{pspicture}
						\leadsto
						\begin{pspicture}[shift=-1](-4,0)(3,-1.5)
						\psline[linecolor=gray, linewidth=0.5pt](-0.5,0)(-0.5,-1.2)
						\psellipticarc(1.5,0)(0.5,0.5){180}{0}
						\psellipticarc(1.5,0)(1.5,1.2){180}{0}
						\rput(1,0.3){\tiny $i$}
						\rput(2,0.3){\tiny $i+1$}
						\rput(3,0.3){\tiny $s$}
						\psellipticarc(-2.5,0)(0.5,0.5){180}{0}
						\psellipticarc(-2.5,0)(1.5,1.2){180}{0}
						\rput(-2,0.3){\tiny $-i$}
						\end{pspicture}$$
						Depending on $s$ being at most $n$ or greater $n$ we have the following situation
						\begin{itemize}
						\item[$s\leq n$:]
							$$
							\begin{pspicture}(0,0)(4,-0.5)
							\psellipticarc(1,0)(0.5,0.5){180}{0}
							\psellipticarc(3,0)(0.5,0.5){180}{0}
							\rput(1.5,0.3){\tiny$i$}
							\rput(2.5,0.3){\tiny$i+1$}
							\end{pspicture}
							\leadsto
							\begin{pspicture}[shift=-1](-0.5,0)(3,-1.5)
							\psellipticarc(1.5,0)(0.5,0.5){180}{0}
							\psellipticarc(1.5,0)(1.5,1.2){180}{0}
							\rput(1,0.3){\tiny $i$}
							\rput(2,0.3){\tiny $i+1$}
							\end{pspicture}
							$$
							$$\mbox{On the other hand, multiplication by $e_i$ gives us}$$
							$$
							\begin{pspicture}[shift=-1](0,2)(4,-0.5)
							\psline[linecolor=gray](0,0)(4,0)
							\psellipticarc(1,0)(0.5,0.5){180}{0}
							\psellipticarc(3,0)(0.5,0.5){180}{0}
							\psline(0.5,0)(0.5,2)
							\psline(3.5,0)(3.5,2)
							\psellipticarc(2,0)(0.5,0.5){0}{180}
							\psellipticarc(2,2)(0.5,0.5){180}{0}
							\end{pspicture}
							=
							 \begin{pspicture}[shift=-0.75](-0.5,0)(3,-1.5)
							\psellipticarc(1.5,0)(0.5,0.5){180}{0}
							\psellipticarc(1.5,0)(1.5,1.2){180}{0}
							\end{pspicture}
							$$
						\item[$s>n$:]
							$$
							\begin{pspicture}[shift=-1](0,0)(3.5,-2)
							\psellipticarc(1,0)(0.5,0.5){180}{0}
							\psline(2.5,0)(2.5,-2)
							\rput(1.5,0.3){\tiny$i$}
							\rput(2.5,0.3){\tiny$i+1$}
							\end{pspicture}
							\leadsto
							\begin{pspicture}[shift=-1](-0.5,0)(3,-2)
							\psellipticarc(2,0)(0.5,0.5){180}{0}
							\psline(0.5,0)(0.5,-2)
							\rput(0.5,0.3){\tiny $i$}
							\rput(1.5,0.3){\tiny $i+1$}
							\end{pspicture}$$
							$$\mbox{Multiplication by $e_i$ gives us}$$
							 $$\begin{pspicture}[shift=-1.5](0,2.5)(3.5,-1)
							\psline[linecolor=gray](0,0)(3,0)
							\psellipticarc(1,0)(0.5,0.5){180}{0}
							\psline(2.5,0)(2.5,-1)
							\psline(0.5,0)(0.5,2)
							\psellipticarc(2,0)(0.5,0.5){0}{180}
							\psellipticarc(2,2)(0.5,0.5){180}{0}
							\end{pspicture}
							=
							\begin{pspicture}[shift=-1](-0.5,0)(3,-2)
							\psellipticarc(2,0)(0.5,0.5){180}{0}
							\psline(0.5,0)(0.5,-2)
							\end{pspicture}
							$$
						\end{itemize}
	
{\bf Case (II):} We find $w'\in W^\p$ and $s_i\in\S$ such that $w=w's_i$ for some $w'\in\W^\p$. Hence $\underline{N}_w=\underline{N}_{w'}C_{s_i}$, an so  $\underline{N}_wC_{s_i}=\underline{N}_{w'}C_{s_i}^2=(q+q^{-1})\underline{N}_w$, because $C_{s_i}^2=(q+q^{-1})C_{s_i}$.
For our $\{+,-\}$-sequence $l(w_i)<l(w)$ means that we have for $i\not=0$ a {$+$} at place $i$ and a {$-$} at place $i+1$, which get exchanged by $s_i$, and for $i=0$ two {$-$}'s at places $1$ and $2$ which are exchanged for two {$+$}'s. So in $T(w)$ we have a cup going from $i$ to $i+1$, respectively two linked cups going from $-1$ to $2$ and from $-2$ to $1$.   Multiplying by $e_i$ gives us
						$$\begin{pspicture}[shift=-0.75](0,0)(1.5,2)
						\psline[linecolor=gray](-0.5,0.5)(1.5,0.5)
						\pscircle(0.5,0.5){0.5}
						\psellipticarc(0.5,2)(0.5,0.5){180}{0}			
						\end{pspicture}
						\stackrel{(i)}{=}
						(q+q^{-1})\cdot
						\begin{pspicture}[shift=-0.5](-0.75,-0.5)(0.5,0.5)
						\psellipticarc(0,0.5)(0.5,0.5){180}{0}			
						\end{pspicture}$$
$$\begin{pspicture}[shift=-0.75](0,0)(1.5,2)
						\psline[linecolor=gray](-0.5,0.5)(1.5,0.5)
						\pscircle(0.5,0.5){0.5} \pscircle*(0.5,0){0.2} \pscircle*(0.5,1){0.2} \pscircle*(0.5,1.5){0.2}
						\psellipticarc(0.5,2)(0.5,0.5){180}{0}			
						\end{pspicture}
						\stackrel{(ii)}{=}
						\begin{pspicture}[shift=-0.75](-0.5,0)(1.5,2)
						\psline[linecolor=gray](-0.5,0.5)(1.5,0.5)
						\pscircle(0.5,0.5){0.5} \pscircle*(0.5,1.5){0.2}
						\psellipticarc(0.5,2)(0.5,0.5){180}{0}			
						\end{pspicture}
						\stackrel{(i)}{=}
						(q+q^{-1})\cdot
						\begin{pspicture}[shift=-0.5](-0.75,-0.5)(0.5,0.5)
						\psellipticarc(0,0.5)(0.5,0.5){180}{0} \pscircle*(0,0){0.2}		
						\end{pspicture}$$
						So our tangle stays the same but is multiplied by $(q+q^{-1})$ which is exactly what we wanted.

{\bf Case (III):} This is the most tricky case; it is not easy to describe the action of a $C_{s_i}$ on $\underline{N}_w$. We argue in terms of oriented cup subdiagrams and relative Kazhdan-Lusztig elements, distinguishing four cases:
			\begin{enum}
			\item $1\leq i\leq n-1$ and pluses at $i$ and $i+1$
			\item $1\leq i\leq n-1$ and minuses at $i$ and $i+1$
			\item $i=0$ and {$+$} at $1$, {$-$} at $2$
			\item $i=0$ and {$-$} at $1$, {$+$} at $2$
			\end{enum}
			We do the first case, the others are similar.
				\begin{enum}
				\item $1\leq i\leq n-1$ and pluses at $i$ and $i+1$:
					In our full cup diagram the relevant parts look like this:
					$$\begin{pspicture}[shift=-1](-4,0)(3,-1.5)
					\psellipticarc(1.5,0)(0.5,0.5){180}{0}
					\psellipticarc(1.5,0)(1.5,1.2){180}{0}
					\rput(0,0.3){\tiny $i$}
					\rput(1,0.3){\tiny $i+1$}
					\rput(2,0.3){\tiny $s$}
					\rput(3,0.3){\tiny $t$}
					\psellipticarc(-2.5,0)(0.5,0.5){180}{0}
					\psellipticarc(-2.5,0)(1.5,1.2){180}{0}
					\psline[linecolor=gray](-0.5,0.5)(-0.5,-1.5)
					\end{pspicture}$$
					The relative Kazhdan-Lusztig element associated to this is
					 $$K=N_{|\down\down\up\up]}+vN_{|\down\up\down\up]}+vN_{|\up\down\up\down]}+v^2N_{|\up\up\down\down]}.$$
					Depending on the endings of the cups, some terms become $0$ because of restrictions on the weights. Multiplying the partial KL-polynomial by $C_{s_i}$ the first and the last term become $0$. The second one gets shorter, the third one longer. This leaves us with
					 $$K_{new}=N_{|\down\up\down\up]}+qN_{|\up\down\down\up]}+qN_{|\down\up\up\down]}+q^2N_{|\up\down\up\down]}.$$
					This is the relative KL-element associated to the cup subdiagram				
						$$\begin{pspicture}(0,0.5)(4,-0.5)
						\psellipticarc(1,0)(0.5,0.5){180}{0}
						\psellipticarc(3,0)(0.5,0.5){180}{0}
						\rput(0.5,0.3){\tiny$i$}
						\rput(1.5,0.3){\tiny$i+1$}
						\rput(2.5,0.3){\tiny $s$}
						\rput(3.5,0.3){\tiny $t$}
						\psline[linecolor=gray](0,0.5)(0,-1)
						\psellipticarc(-1,0)(0.5,0.5){180}{0}
						\psellipticarc(-3,0)(0.5,0.5){180}{0}
						\end{pspicture}$$
					Depending on the values of $s$ and $t$ there are three different situations:
					\begin{enumerate}[]
					\item {\bf $s,t\leq n$:} Then the part in our decorated tangle is the same as the right half in the cup subdiagram. Multiplying by $e_i$ we get
						$$
						\begin{pspicture}[shift=-2](-0.5,2)(3.5,-1.5)
						\psline[linecolor=gray](-0.5,0)(3.5,0)
						\psellipticarc(1.5,0)(0.5,0.5){180}{0}
						\psellipticarc(1.5,0)(1.5,1.2){180}{0}
						\psellipticarc(0.5,0)(0.5,0.5){0}{180}
						\psellipticarc(0.5,2)(0.5,0.5){180}{0}
						\psline(2,0)(2,2)
						\psline(3,0)(3,2)
						\end{pspicture}
						=
						\begin{pspicture}(0,0)(4,-0.5)
						\psellipticarc(1,0)(0.5,0.5){180}{0}
						\psellipticarc(3,0)(0.5,0.5){180}{0}
						\end{pspicture}
						$$
						which is exactly what we want.
					\item {\bf $s\leq n, t>n$:} We must restrict to the case that the orientation at $t$ is {$\up$}. So the last two terms in $K$ and $K_{new}$ disappear. In terms decorated tangles we get the multiplication
						$$
						\begin{pspicture}[shift=-2](-0.5,2.5)(3,-2)
						\psline[linecolor=gray](0,0)(3,0)
						\psellipticarc(2,0)(0.5,0.5){180}{0}
						\psline(0.5,0)(0.5,-2)
						\psellipticarc(1,0)(0.5,0.5){0}{180}
						\psellipticarc(1,2)(0.5,0.5){180}{0}
						\psline(2.5,0)(2.5,2)
						\end{pspicture}
						=
						\begin{pspicture}[shift=-1](0,0.5)(3.5,-2)
						\psellipticarc(1,0)(0.5,0.5){180}{0}
						\psline(2.5,0)(2.5,-2)
						\end{pspicture}
						$$						
					\item {\bf $s,t>n$:} This time, the weights must have {$\up$}'s at $s$ and $t$. In $K$ only the first term survives and $K_{new}$ is just $0$. This agrees with the cup subdiagrams since the new diagram is not a cup subdiagram of any cup diagram: otherwise we had a {$+$} at $s$, which is not possible, since $s$ is greater than $n$. In terms of tangles we get
						$$
						\begin{pspicture}[shift=-2](0,2.5)(2.5,-2)
						\psline[linecolor=gray](0,0)(2,0)
						\psline(0.5,0)(0.5,-2)
						\psline(1.5,0)(1.5,-2)
						\psellipticarc(1,0)(0.5,0.5){0}{180}
						\psellipticarc(1,2)(0.5,0.5){180}{0}
						\rput(0.5,-2){\footnotesize\boldmath$\times$}
						\rput(1.5,-2){\footnotesize\boldmath$\times$}
						\end{pspicture}
						\stackrel{\eqref{actionrel}}{=}
						0
						$$
					\end{enumerate}
				\end{enum}
		In all the cases the diagrammatical action of the tangles coincides with the action of the Hecke algebra. This proves that the diagram in the theorem commutes. The second assertion is clear. The faithfulness is proved in the next section.
\end{proof}

\subsection{The cellular algebra $\operatorname{\widehat{TL}}(\W)$}
In \cite[Thm. 4.2]{Gre98}, Green gives an explicit description of a basis $\mathcal{B}$ of $\operatorname{TL}(\W)$ in terms of decorated tangles as follows:
\begin{thm}
The basis $\mathcal{B}$ consists of $(n,n)$-tangles $T$ with at most one decoration on each loop or edge satisfying one of the following two conditions:
\begin{enum}
	\item $T$ contains one loop which is dotted, and no other loops or decorations; also, there is at least one non-propagating edge in the diagram.
	\item $T$ contains no loops and the total number of decorations is even.
\end{enum}
\end{thm}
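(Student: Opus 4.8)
The plan is to pass, via Proposition~\ref{prop:Green}, to the concrete model $\mathbb{DT}_n$ modulo the relations~\eqref{fig:TL-Rel}, which is literally $\operatorname{TL}(\W)$, and then to show that the diagrams listed in $\mathcal{B}$ both span and are linearly independent over $\L$. The guiding observation is that the three relations in~\eqref{fig:TL-Rel} only ever alter loops and decorations: they never touch the underlying undecorated matching of the $2n$ boundary points. Hence the undecorated tangle is preserved by all reductions and will serve as the bookkeeping device throughout.

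For spanning I would start from an arbitrary decorated $(n,n)$-tangle and run an explicit rewriting procedure. Relation~(ii) removes decorations two at a time, so I may assume each loop and each edge carries at most one dot. Relation~(i) deletes every undotted loop at the cost of a scalar $(q+q^{-1})\in\L$. If a dotted loop survives, relation~(iii) absorbs every remaining dot, leaving a single dotted loop on an otherwise undecorated tangle; since a closed loop cannot coexist with a matching consisting only of propagating edges, such a diagram automatically contains a non-propagating edge, and we land in case~(1). If no loop survives, the tangle is loopless and, by relation~(ii), the total number of decorations is now a well-defined class in $\Z/2$; I would then check that the accessibility condition on dots, together with the remaining freedom in~\eqref{fig:TL-Rel}, forces the odd class to be reducible to diagrams already accounted for, so that the surviving normal forms are exactly the even loopless tangles of case~(2).

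For linear independence I would avoid the faithful action of Theorem~\ref{cupcap}, since for even $n$ that action is faithful only for the further quotient $\operatorname{\widehat{TL}}(\W)$ and hence need not separate elements of $\mathcal{B}$ inside $\operatorname{TL}(\W)$ itself. Instead I would use a dimension count: by the general theory of generalized Temperley--Lieb algebras, $\operatorname{TL}(\W)$ is free over $\L$ of rank equal to the number of fully commutative elements of $\W$. A direct bijection between $\mathcal{B}$ and this set --- reading each basis diagram off as a reduced, commutation-class-unique word via the cutting dictionary of Section~\ref{gen_TL}, with the dotted cups and edges encoding the type-$D_n$ folding --- shows that $|\mathcal{B}|$ equals that rank. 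Combined with the spanning statement, this forces $\mathcal{B}$ to be a basis.

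I expect the main obstacle to be the parity bookkeeping in the loopless case: establishing precisely that \emph{even} total decoration is the correct normal-form condition, and that odd-decoration loopless diagrams genuinely collapse rather than contributing new independent elements. This step, unlike the purely formal loop-and-dot simplification of case~(1), really uses the accessibility constraint and the subtle interaction of all three relations. The second delicate point is making the bijection with fully commutative elements explicit enough that the rank count is airtight, since this is where the specific structure of $D_n$ (and the even/odd-$n$ dichotomy already visible in Theorem~\ref{cupcap}) enters.
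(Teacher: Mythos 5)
First, a framing point: the paper contains no proof of this statement at all --- it is quoted verbatim from \cite[Thm. 4.2]{Gre98}, so the only meaningful comparison is with Green's original argument, whose overall strategy (rewriting to normal forms for spanning, then a count against the fully commutative elements of $\W$ for linear independence) your sketch does reproduce. You were also right to steer away from the faithful action of Theorem \ref{cupcap} when proving independence: beyond the even-$n$ quotient issue you raise, the paper's faithfulness proof itself consumes the basis $\{b_i\}$ furnished by the present theorem, so that route would be circular.

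That said, the proposal has three genuine gaps. (a) Your justification of the non-propagating-edge clause in case (1) is false as stated: topologically a circle coexists perfectly well with $n$ propagating strands --- place it to the left of all of them, where a dot on it is even accessible from the left. What forces a non-propagating edge is algebra, not topology: $\mathbb{DT}_n$ is by definition the span of products of the $e_i$, each generator has through-degree $n-2$, through-degree cannot increase under concatenation, and a loop arises only from a nonempty product; hence any element of $\mathbb{DT}_n$ whose tangle contains a loop also contains a cup and a cap. (b) The parity step is misframed, and your rewriting procedure as described cannot settle it: relations (i) and (ii) of \eqref{fig:TL-Rel} preserve the parity of the total number of dots, and (iii) applies only in the presence of a dotted loop, so an odd-decorated loopless diagram does not ``collapse'' to anything --- it simply never occurs. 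The correct argument is that each generator carries an even number of dots (two for $e_0$, zero otherwise), so every concatenated product has even total decoration; a loopless reduction chain preserves this parity, while the parity flip down to the single dot of case (1) happens exactly through the dotted loop and relation (iii). (c) The independence count is asserted rather than carried out: that $\operatorname{TL}(\W)$ is $\L$-free on the fully commutative elements is an acceptable citable input, and spanning plus matching cardinality does force a basis over the commutative ring $\L$, but the bijection between $\mathcal{B}$ and the fully commutative elements of type $D_n$ is the substantive content of Green's theorem and cannot be read off the cutting dictionary of Section \ref{gen_TL}, which relates cup diagrams to the coset space $\W^\p$, not $(n,n)$-tangles to fully commutative elements of all of $\W$. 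Until (a)--(c) are filled in, the proposal is a program in the spirit of Green's proof, not a proof.
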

Because of Relation $(iii)$ in \eqref{actionrel} the elements of the first type are $0$ while the other survive in $\operatorname{\widehat{TL}}(\W)$ and clearly form a basis. We denote the elements in this basis by $b_i$, $i\in I$,  with the unit element $b_0$.

\begin{proof}[Proof of faithfulness for $n$ odd]
Denote the decorated tangles in $\operatorname{dC}(n)$ by $d_j$, $j\in J$. Let $b=\sum_{i\in I}\alpha_i b_i\in\operatorname{\widehat{TL}}(\W)$ such that $b$ acts trivially, i.e. $d.b=0$ for all $d=\sum_{j\in J}\beta_jd_j\in\Nmod_t$. We have to deduce $b=0$ or equivalently $\alpha_i=0$ for all $i$.

Let $C=C_0$ be the unique element in $\operatorname{dC}(n)$ containing no cups. Note that
\begin{eqnarray*}
A(C)&:=&\{i\in I\mid \exists j\:: d_j.b_i=C\}=\{0\}.
\end{eqnarray*}
Then the coefficient of $C$ in $C.b$ equals $\alpha_0$, hence $\alpha_0=0$. Assume now that $\alpha_i=0$ for all $i$ satisfying the condition that there exists some $j$ such that $d_j.b_i$ has at most $m-1$ cups. Call the set of such $i$'s $A_{<m}$.
Now choose $C\in\operatorname{dC}(n)$ with $m$ cups (arbitrarily dotted or undotted). Consider the set
 \begin{eqnarray*}
A(C)&:=&\{i\in I\mid \exists j\:: d_j.b_i=C, i\notin A_{<m}\}.
\end{eqnarray*}
Choose $i_0\in A(C)$ such that $\alpha_{i_0}$ has maximal $q$-degree. Take $b_{i_0}$, cut in the middle and reflect horizontally to obtain an even decorated cup diagram $C_{i_0}$ after possibly inserting or removing a dot on the edge to make sure it is even (the edge always exists, since $n$ is odd).
Then we obtain, using that $\alpha_i=0$ for $i\in  A_{<m}$, the equalities
\begin{eqnarray*}
&0=C_{i_0}.b=\sum\limits_{i\in I}\alpha_i C_{i_0}.b_i=\sum\limits_{i\in A_{C}}\alpha_i C_{i_0}.b_i+\sum\limits_{i\not\in A_{<m}\cup  A_{C}}\alpha_i C_{i_0}.b_i&
\end{eqnarray*}
Since $C$ does not occur by definition in the second summand, the coefficient, say $\gamma_C$, of $C$ in the first summand must be zero. We want to show that $\alpha_{i_0}=0$. We claim that $C_{i_0}.b_{i_0}=\gamma C$ for some $\gamma\in\L$. Indeed, the cups contained in $b_{i_0}$ are also contained in $C$, since there exists some $d_j$ such that $d_j.b_{i_0}=C$ and the cups  in $b_{i_0}$ will not get changed. On the other hand, the number of cups in $b_{i_0}$ equals $m$, the number of cups in $C$, since otherwise $i_0\in A_{<m}$. Hence the claim follows. Moreover, $\gamma=(q+q^{-1})^m$, since putting $b_{i_0}$ on top of $C_{i_0}$ creates exactly $m$ internal circles, each of them with an even number of dots. Moreover, putting $b_{i}$ on top of $C_{i_0}$ for $i\in A_{C}$ creates either circles with an odd number of  dots or altogether less than $m$ circles. Hence $\gamma_{i_0}$ is a polynomial in $q$ of degree $m$ with leading term equal to $\alpha_{i_0}$ and so $\alpha_{i_0}=0$. Now we can proceed by induction on $m$ and the $q$-degree.
Hence the action is faithful.
\end{proof}

\begin{proof}[Proof of faithfulness for $n$ even]
Note that multiplying a generator of $I$ with some $e_i$ either annihilates it or keeps the maximal number of cups and caps. Similarly to Lemma \ref{lem:eventoeven} one shows that the defining property of $I$ is preserved. Hence $I$ is an ideal. With the same notation and arguments as in the case $n$ odd we obtain $\alpha_i=0$ for all $i\in A_{<\frac{n}{2}}$ and all $i$ such that $b_i$ has $\frac{n}{2}$ cups with an even number of them undotted. Note that any $b_i$ with $\frac{n}{2}$ cups and an odd number of them undotted kills any element in $\operatorname{dC}(n)$ by Lemma \ref{lem:eventoeven}. Hence, the action factors through $I$ and is faithful when passing to the quotient.
\end{proof}

\begin{rem}
The distinction between $n$ even and odd has a nice interpretation in terms of Springer fibers and nilpotent orbits, see \cite{ES}.
\end{rem}

Let $\Lambda=\{0,1,\ldots, n\}$ with the usual total ordering on integers, and for $\lambda\in\Lambda$ denote by $M(\lambda)$ the set of $(\lambda,n)$ even dotted tangles. Given $T\in M(\la)$ let $T^*$ denote the tangle obtained by reflecting $T$ horizontally obtaining an $(n, \lambda)$-diagram. Note that acting with $X\in\operatorname{\widehat{TL}}(\W)$ on $T$ produces a linear multiple of a $(\lambda',n)$-tangle where $\lambda'\leq\lambda$.
		
\begin{thm}
\label{thm:cellular}
The algebra  $\operatorname{\widehat{TL}}(\W)$ is a cellular algebra in the sense of Graham and
Lehrer \cite{GL} with cell datum $(\Lambda, M, C, *)$ where
\begin{enumerate}[a.)]
\item $M(\la)$ denotes the set of even dotted $(\lambda,n)$-tangles
\item $C$ is defined by setting
$C^\la_{\alpha,\beta}$ the diagram obtained by putting $\alpha$ on top of $\beta^*$ and concatenate
\item $*:\operatorname{\widehat{TL}}(\W)\rightarrow \operatorname{\widehat{TL}}(\W)$ is the anti-automorphism obtained by reflecting the tangle diagrams horizontally.
\end{enumerate}
\end{thm}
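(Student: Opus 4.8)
The plan is to verify the three Graham--Lehrer axioms of \cite{GL} directly, leaning on the structural results already established: Green's explicit basis $\{b_i\}_{i\in I}$ of $\operatorname{\widehat{TL}}(\W)$ (with the type-(a) elements killed by relation (iii)), the faithful local action on $\Nmod_t$ from Theorem~\ref{cupcap}, and the observation preceding the statement that acting with any $X$ can only preserve or decrease the number of propagating strands. First I would check that $C$ is injective with image a basis. Every basis diagram $b_i$ is a loop-free $(n,n)$-tangle; cutting it along a horizontal middle line separates it into its $\lambda$ propagating edges together with the cups collected at the top and the caps at the bottom. The integer $\lambda$ is the cell label, the top half is an even dotted $(\lambda,n)$-tangle $\alpha\in M(\lambda)$, and the bottom half is $\beta^*$ for a unique $\beta\in M(\lambda)$, after pushing dots to a canonical position by relation (ii). This cutting/regluing is bijective in the spirit of Lemma~\ref{cutlemma}, so the $C^\lambda_{\alpha,\beta}$ run exactly over the basis; and $\Lambda=\{0,\dots,n\}$ is totally ordered, hence a poset.

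For the involution, horizontal reflection reverses the order of vertical stacking, so it sends a product $xy$ to $y^*x^*$ and is an anti-homomorphism; it is visibly an involution; and since decorations, circles and the defining relations (i)--(iii) (as well as the ideal $I$ for $n$ even, by the parity bookkeeping of Lemma~\ref{lem:eventoeven}) are symmetric under reflection, $*$ descends to $\operatorname{\widehat{TL}}(\W)$. Reflecting ``$\alpha$ on top of $\beta^*$'' yields ``$\beta$ on top of $\alpha^*$'', giving $(C^\lambda_{\alpha,\beta})^*=C^\lambda_{\beta,\alpha}$ as required.

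The crux is the cellular multiplication rule. Realising $C^\lambda_{\alpha,\beta}$ as the $(n,n)$-tangle with $\alpha$ stacked above $\beta^*$, I would compute $e_j\,C^\lambda_{\alpha,\beta}$ by stacking $e_j$ on top and reducing via (i)--(iii). The observation preceding the statement guarantees that this cannot raise the number $\lambda$ of propagating strands, and the caps of $\beta^*$ are never touched by a top action, so by the locality exploited throughout the proof of Theorem~\ref{cupcap} the reduction is governed entirely by the interaction of $e_j$ with the top half $\alpha$. If the number of propagating strands drops below $\lambda$, the product is a scalar multiple of a diagram with fewer than $\lambda$ through-strands and hence lies in $\operatorname{\widehat{TL}}(\W)^{<\lambda}$. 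If it stays equal to $\lambda$, then all $\lambda$ propagating strands of $\beta^*$ still reach the top, the lower half of the reduced diagram is again exactly $\beta^*$, and the product equals $\sum_{\alpha'} r(\alpha',\alpha)\,C^\lambda_{\alpha',\beta}$, where the new top halves $\alpha'$ and the coefficients $r(\alpha',\alpha)$ --- powers of $(q+q^{-1})$ from internal circles, after the dot bookkeeping of (ii)--(iii) --- depend only on $\alpha$ and $e_j$, not on $\beta$. This is the cellular axiom for the generators, and I would extend it to arbitrary $X\in\operatorname{\widehat{TL}}(\W)$ by induction on the word length of $X$ in the $e_j$, the generator rule itself showing that $\operatorname{\widehat{TL}}(\W)^{<\lambda}$ is a left ideal.

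The main technical obstacle is the decoration bookkeeping hidden in these two outcomes: I must pin down the canonical dot placement (at most one dot per strand, normalised by (ii)) so that $\alpha,\beta$ and the resulting $\alpha'$ are well-defined elements of $M(\lambda)$, and verify case by case --- much as in Lemma~\ref{lem:eventoeven} --- that stacking $e_j$ respects this normalisation and produces only factors $(q+q^{-1})$, with dotted circles annihilating. A second delicate point is the case $n$ even, where one works in the quotient by the ideal $I$: here I must check that $I$ is $*$-stable and that discarding the through-number-$0$ diagrams with an odd number of undotted cups leaves consistent index sets $M(\lambda)$, for which the parity arguments of Lemma~\ref{lem:eventoeven} and of the faithfulness proof supply exactly what is needed.
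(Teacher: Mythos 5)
Your proposal is correct and follows essentially the same route as the paper: both verify the Graham--Lehrer axioms directly, with the real content being the bijectivity of $C$ via cutting a loop-free basis tangle horizontally and using the dot-parity count (with $c$ undotted cups and $d$ dotted edges the total number of dots is $\equiv n-\la \pmod 2$) to fix the unique placement of the possible dot on an edge so that both halves are even, together with the observation that the ideal $I$ for $n$ even consists precisely of the uncuttable tangles. The paper simply declares the poset, anti-automorphism and multiplication axioms ``clear'' where you spell them out via locality and the non-increase of propagating strands, which is a harmless elaboration rather than a different argument.
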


\begin{proof}
Recall that being a cell datum $(\Lambda, M, C, *)$ of the associative algebra $H=\operatorname{\widehat{TL}}(\W)$ means
\begin{itemize}
\item
$\La$ is a partially ordered set and $M(\la)$
is a finite set
for each $\la \in \La$;
\item
$C:\dot\bigcup_{\la \in \La} M(\la)
\times M(\la) \rightarrow H,
(\alpha,\beta) \mapsto C^\la_{\alpha,\beta}$ is an injective map
whose image is a basis for $H$;
\item
the map $*:H \rightarrow H$
is an algebra anti-automorphism such that
$(C^\la_{\alpha,\beta})^* = C_{\beta,\alpha}^\la$
for all $\la \in \La$ and $\alpha, \beta \in M(\la)$;
\item
if $\mu \in \La$ and $\gamma, \delta \in M(\la)$
then for any $x \in H$ we have that
\begin{eqnarray}
\label{cell4}
x C_{\gamma,\delta}^\mu &\equiv&\sum_{\gamma' \in M(\mu)} r_x(\gamma',\gamma) C_{\gamma',\delta}^\mu
\pmod{H(< \mu)}
\end{eqnarray}
where the scalar $r_x(\gamma',\gamma)$ is independent of $\delta$ and
$H(< \mu)$ denotes the subspace of $H$
generated by $\{C_{\gamma'',\delta''}^\nu\mid\nu < \mu,
\gamma'',\delta'' \in M(\nu)\}$.
\end{itemize}
The first, third and fourth assertions are clear. For the second note that $C^\la_{\alpha,\beta}$ is obviously nonzero and contains an even number of decorations. Namely if $\alpha$ contains $c$ undotted cups and $d$ dotted edges,  then the number of dotted cups is $n-\lambda-c$ and the total number of dots $d+n-\lambda-c$ which is congruent $n-\la$ modulo $2$. In particular, $C^\la_{\alpha,\beta}$  has an even number of dots and this property is not affected by removing two dots from a common edge. The result is then always a basis vector of $H$. On the other hand, given a basis vector of $H$ for $n$ odd, we can cut it horizontally into a pair of an even decorated cup diagram and an even decorated cap diagram. The result is unique except that there is a choice where to move the possible existing unique dot on an edge. With the above calculation we see however that there is a unique choice where to move the dot such that the decorated cup and cap diagram are both even. Hence $C$ is a bijection onto the set of basis vectors. In case $n$ is even, the argument is the same since we factor out precisely those tangles which cannot be cut to get an even decorated cup diagram and an even decorated cap diagram.
\end{proof}

\begin{rem}
{\rm Via \eqref{cell4}, the span of the set $M(\la)$ can be equipped with a $\operatorname{\widehat{TL}}(\W)$-module structure as follows: for $\alpha\in M(\la)$ choose $\beta$ such that $C^\la_{\alpha,\beta}$ is defined. Then the action of $x\in H$ is given by taking the product $xC^\la_{\alpha,\beta}$ from \eqref{cell4} modulo $H(< \la)$. The result is a linear combination of elements $C^\la_{\alpha',\beta}$, independent of $\beta$. Hence we can forget $\beta$ and obtain our desired module structure.
The modules $M(\la)$ are the {\it cell modules} in the sense of \cite{GL}. Theorem \ref{cupcap} implies that $\mathcal{N}_t$ has a filtration of cell modules $M(\la)$ for $0\leq \la\leq n$ with $\la\equiv n\;\op{mod} 2$ and each such $M(\la)$ appearing exactly once. The picture in Example \ref{cupcap} nicely shows this filtration in case $n=4$.
Using the method of little groups, \cite{Serre}, one shows that the above cell modules are indeed irreducible $\W$-modules. Our Theorem \ref{cupcap} can then be used to categorify these irreducible $\W$-modules using subquotient categories of  $\O^\p_0(\mathfrak{so}_{2n})$.}
\end{rem}

\begin{example}
{\rm Let $n=3$ then $\operatorname{\widehat{TL}}(\W)$ is $10$-dimensional with the map $C$ illustrated in the following diagrams:
\begin{equation*}
\begin{tikzpicture}[thick]
\draw (.5,0) to +(0,-1);
\draw (1,0) to +(0,-1);
\draw (1.5,0) to +(0,-1);
\draw[dashed] (.25,-.5) to +(1.5,0);

\begin{scope}[xshift=2.5cm]
\draw (.5,0) .. controls +(0,-.5) and +(0,-.5) .. +(.5,0);
\draw (.5,-1) .. controls +(0,.5) and +(0,.5) .. +(.5,0);
\draw (1.5,0) to +(0,-1);
\draw (1.5,-.25) circle(2.5pt) (1.5,-.75) circle(2.5pt);
\draw[dashed] (.25,-.5) to +(1.5,0);
\end{scope}

\begin{scope}[xshift=5cm]
\draw (.5,0) .. controls +(0,-.5) and +(0,-.5) .. +(.5,0);
\draw (1,-1) .. controls +(0,.5) and +(0,.5) .. +(.5,0);
\draw (1.5,0) to +(-1,-1);
\draw (1.25,-.25) circle(2.5pt) (.75,-.75) circle(2.5pt);
\draw[dashed] (.25,-.5) to +(1.5,0);
\end{scope}

\begin{scope}[xshift=7.5cm]
\draw (1,0) .. controls +(0,-.5) and +(0,-.5) .. +(.5,0);
\draw (.5,-1) .. controls +(0,.5) and +(0,.5) .. +(.5,0);
\draw (.5,0) to +(1,-1);
\draw (.75,-.25) circle(2.5pt) (1.25,-.75) circle(2.5pt);
\draw[dashed] (.25,-.5) to +(1.5,0);
\end{scope}

\begin{scope}[xshift=10cm]
\draw (.5,0) to +(0,-1);
\draw (1,0) .. controls +(0,-.5) and +(0,-.5) .. +(.5,0);
\draw (1,-1) .. controls +(0,.5) and +(0,.5) .. +(.5,0);
\draw (.5,-.25) circle(2.5pt) (.5,-.75) circle(2.5pt);
\draw[dashed] (.25,-.5) to +(1.5,0);
\end{scope}

\begin{scope}[yshift=-2cm]

\draw (.5,0) .. controls +(0,-.5) and +(0,-.5) .. +(.5,0);
\fill (.75,-.35) circle(2.5pt);
\draw (.5,-1) .. controls +(0,.5) and +(0,.5) .. +(.5,0);
\fill (.75,-.64) circle(2.5pt);
\draw (1.5,0) to +(0,-1);
\draw (1.5,-.25) circle(2.5pt) (1.5,-.75) circle(2.5pt);
\draw[dashed] (.25,-.5) to +(1.5,0);

\begin{scope}[xshift=2.5cm]
\draw (.5,0) .. controls +(0,-.5) and +(0,-.5) .. +(.5,0);
\fill (.75,-.35) circle(2.5pt);
\draw (1,-1) .. controls +(0,.5) and +(0,.5) .. +(.5,0);
\draw (1.5,0) to +(-1,-1);
\fill (.75,-.75) circle(2.5pt);
\draw[dashed] (.25,-.5) to +(1.5,0);
\end{scope}

\begin{scope}[xshift=5cm]
\draw (1,0) .. controls +(0,-.5) and +(0,-.5) .. +(.5,0);
\draw (.5,-1) .. controls +(0,.5) and +(0,.5) .. +(.5,0);
\fill (.75,-.64) circle(2.5pt);
\draw (.5,0) to +(1,-1);
\fill (.75,-.25) circle(2.5pt);
\draw[dashed] (.25,-.5) to +(1.5,0);
\end{scope}

\begin{scope}[xshift=7.5cm]
\draw (.5,0) .. controls +(0,-.5) and +(0,-.5) .. +(.5,0);
\fill (.75,-.35) circle(2.5pt);
\draw (.5,-1) .. controls +(0,.5) and +(0,.5) .. +(.5,0);
\draw (1.5,0) to +(0,-1);
\fill (1.5,-.75) circle(2.5pt);
\draw[dashed] (.25,-.5) to +(1.5,0);
\end{scope}

\begin{scope}[xshift=10cm]
\draw (.5,0) .. controls +(0,-.5) and +(0,-.5) .. +(.5,0);
\draw (.5,-1) .. controls +(0,.5) and +(0,.5) .. +(.5,0);
\fill (.75,-.64) circle(2.5pt);
\draw (1.5,0) to +(0,-1);
\fill (1.5,-.25) circle(2.5pt);
\draw[dashed] (.25,-.5) to +(1.5,0);
\end{scope}

\end{scope}
\end{tikzpicture}
\end{equation*}
The unfilled circles are $\bullet$'s which are added to obtain a pair of even decorated cup/cap diagrams. There are two cell modules, the $1$-dimensional module $M(3)$ and the $3$-dimensional module $M(1)$. Over $\mathbb{C}(q)$ the algebra is semisimple (as a quotient of the semisimple algebra $\mathbb{C}(q)\otimes_{\mathbb{Z}[q,q^{-1}]} \H$, see e.g. \cite{Mathas}). The cell modules are then irreducible, since $10=1^2+3^2$.
}
\end{example}

\section{Algebra structure and Koszul gradings}
Similar to \cite{BS08:III}, the vector space $E^\p$ can be equipped with a diagrammatically defined associative algebra structure, compatible with the grading given by the $\op{deg}$-function. This algebra provides a graded version of Braden's algebra in type $D$ from \cite{Braden}, see \cite{ES} for details.

In \cite{BGS}, a graded version $\O^\p_0(\mathfrak{so}_{2n})^\mathbb{Z}$ of  $\O^\p_0$ was defined and it was shown that $E^\p$ can be equipped with a Koszul grading. This grading and graded versions of translation functors were studied in detail in \cite{Stgrad}. The results then imply that  Proposition~\ref{dottedcirclediags} can be refined
\begin{thm}
The graded piece of degree $i$ in
$\operatorname{Hom}_{\O^\p_0(\mathfrak{so}_{2n})^\mathbb{Z}}(P(w),P(w'))$ is of dimension equal to the number of oriented circle diagrams $\overline{w}v\underline{w'}$ with $\op{deg}=i$.
\end{thm}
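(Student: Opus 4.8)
The plan is to upgrade the ungraded count \eqref{eq:dim_hom_space} to an identity of Poincaré polynomials and then to recognize the resulting polynomial as the degree-generating function of oriented circle diagrams. Since $\O^\p_0(\mathfrak{so}_{2n})$ carries a Koszul grading by \cite{BGS} and the graded translation functors of \cite{Stgrad} identify the graded composition multiplicities $[\Delta(v):L(w)]_q$ with the parabolic Kazhdan--Lusztig polynomials $n_{v,w}(q)$, I would first record the graded analogue of \eqref{eq:dim_hom_space}:
\[
\sum_i \dim\big(\operatorname{Hom}_{\O^\p_0(\mathfrak{so}_{2n})^\mathbb{Z}}(P(w),P(w'))_i\big)\,q^i \;=\; \sum_{v\in\W^\p} n_{v,w}(q)\,n_{v,w'}(q).
\]
This is obtained exactly as in the ungraded case: the graded dimension of $\operatorname{Hom}(P(w),P(w'))$ is the graded multiplicity $[P(w'):L(w)]_q$, which by the graded $\Delta$-filtration and graded BGG reciprocity $[P(w'):\Delta(v)]_q=[\Delta(v):L(w')]_q$ equals $\sum_v [\Delta(v):L(w)]_q[\Delta(v):L(w')]_q$; specializing at $q=1$ recovers \eqref{eq:dim_hom_space}.

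The second step is the combinatorial translation, for which I would use Theorem \ref{main_theorem} together with the definition of the degree function. By Theorem \ref{main_theorem} the polynomial $n_{v,w}(q)$ is the monomial $q^{\operatorname{cl}(vC(w))/2}$ whenever $vC(w)$ is oriented and is $0$ otherwise; hence $n_{v,w}(q)\,n_{v,w'}(q)$ is nonzero exactly when the weight $v$ simultaneously orients both $C(w)$ and $C(w')$, that is, exactly when the circle diagram $\overline{w}v\underline{w'}$ is oriented, and it then equals $q^{(\operatorname{cl}(vC(w))+\operatorname{cl}(vC(w')))/2}$. It remains to check that this exponent equals $\op{deg}(\overline{w}v\underline{w'})$. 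Using the remark that clockwise caps in $\overline{C(w)}v$ biject with clockwise cups in $vC(w)$, the total number of clockwise cups and caps of the unfolded circle diagram is $\operatorname{cl}(vC(w))+\operatorname{cl}(vC(w'))$; passing to the folded decorated tangle via Lemma \ref{cutlemma} halves this number, since antisymmetry pairs each clockwise cup (or cap) in the top half with its mirror image and the folding operation identifies such a mirror pair with a single (possibly dotted) cup or cap. Thus $q^{\op{deg}(\overline{w}v\underline{w'})}=n_{v,w}(q)\,n_{v,w'}(q)$, and summing over $v\in\W^\p$ shows that the right-hand side of the displayed formula equals $\sum_i \#\{v: \overline{w}v\underline{w'}\text{ oriented},\ \op{deg}=i\}\,q^i$. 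Comparing coefficients of $q^i$ then yields the theorem.

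The step I expect to be the main obstacle is not the combinatorics but the identification of gradings in the first step: one must be sure that the internal grading supplied by the Koszul structure of \cite{BGS} and \cite{Stgrad} is literally the grading induced by $\op{deg}$ on $E^\p$ in Proposition \ref{dottedcirclediags}, and not merely an abstractly isomorphic one. Here the normalization conventions matter --- the shift placing the heads $L(w)$ in degree $0$, and the behaviour of the grading under the contravariant duality entering graded BGG reciprocity (a misplaced $q\mapsto q^{-1}$ would reverse the grading). Once these conventions are fixed so that $[\Delta(v):L(w)]_q=n_{v,w}(q)$ with $q$-powers counting clockwise cups as in Theorem \ref{main_theorem}, everything aligns and the remaining verification, including the factor-of-two bookkeeping forced by antisymmetry, is routine.
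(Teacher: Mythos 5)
Your proposal is correct and follows exactly the route the paper intends: the paper itself gives no detailed proof, merely asserting that the theorem follows from the Koszul grading of \cite{BGS} and the graded translation functors of \cite{Stgrad} as a refinement of Proposition~\ref{dottedcirclediags}, and your argument (graded BGG reciprocity plus the identification $[\Delta(v):L(w)]_q=n_{v,w}(q)$, then Theorem~\ref{main_theorem} and the folding bookkeeping showing $\op{deg}(\overline{w}v\underline{w'})=\frac{1}{2}\bigl(\operatorname{cl}(vC(w))+\operatorname{cl}(vC(w'))\bigr)$) is precisely the elaboration of that citation-level sketch. Your closing caveat about normalization of the Koszul grading versus the diagrammatic $\op{deg}$-grading is well placed, as this is exactly the point the paper delegates to \cite{Stgrad} (and which the paper's Poincar\'e polynomial example for $n=4$ corroborates).
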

Hence computing the graded dimension of $E^\p$ translates via our calculus into counting the number of clockwise cups and caps. Similarly, Theorem~\ref{cupcap} has a graded version following (\cite[(1.2)]{Stgrad}, \cite[Theorem 2.4]{Str05}).

These results together with \cite{CV} predict the existence of a graded version of the Brauer algebras $\op{Br}_r(\delta)$ for $\delta\in\mathbb{Z}$ similar to the analogous (but technically much easier) construction in \cite{BS08:V} for the walled Brauer algebra. Details about these aspects as well as connections of our diagram combinatorics with the combinatorics for orthysymplectic Lie superalgebra from \cite{GS} and super Schur-Weyl duality, \cite{CLW}, will appear in \cite{ES}.
\bibliographystyle{alpha}
\bibliography{bibliography}
\end{document}